\newcommand{\fe}[2]{(x_{#1},x_{#2})}
\newcommand{\hookdoubleheadrightarrow}{%
  \hookrightarrow\mathrel{\mspace{-15mu}}\rightarrow
}
\newtheorem{theoremalph}{Theorem}
\newtheorem{coralph}[theoremalph]{Corollary}
\newtheorem{theorem}{Theorem}[section]
\newtheorem{lemma}[theorem]{Lemma}
\newtheorem{prop}[theorem]{Proposition}
\newtheorem{cor}[theorem]{Corollary}
\newtheorem*{theorem*}{Theorem}
\newtheorem*{lemma*}{Lemma}
\newtheorem*{proposition*} {Proposition}
\theoremstyle{definition}
\newtheorem{definition}[theorem]{Definition}
\newtheorem{remark}[theorem]{Remark}
\setlist[enumerate]{align=left}
\title{Link Conditions for Cubulation}
\author{Calum J. Ashcroft}
\date{\vspace{-2em}}
\begin{document}
\maketitle
	\begin{abstract}
We provide a condition on the links of polygonal complexes that is sufficient to ensure groups acting properly discontinuously and cocompactly on such complexes contain a virtually free codimension-$1$ subgroup. We provide stronger conditions on the links of polygonal complexes, which are sufficient to ensure groups acting properly discontinuously and cocompactly on such complexes act properly discontinuously on a $CAT(0)$ cube complex. If the group is hyperbolic then this action is also cocompact, hence by Agol's Theorem the group is virtually special (in the sense of Haglund--Wise); in particular it is linear over $\mathbb{Z}$. We consider some applications of this work. Firstly, we consider the groups classified by \cite{Kangaslampi-Vdovina} and \cite{Carbone-Kangaslampli-Vdovina_2012}, which act simply transitively on $CAT(0)$ triangular complexes with the minimal generalized quadrangle as their links, proving that these groups are virtually special. We further apply this theorem by considering generalized triangle groups, in particular a subset of those considered by \cite{Caprace-Conder-Kaluba-Witzel_triangle}. 
	\end{abstract}
%----------------------------------------------------------------------------------------
%   INTRODUCTION
%----------------------------------------------------------------------------------------	

	\section{Introduction}
%----------------------------------------------------------------------------------------
%  Cubulating groups acting on polygonal complexes
%----------------------------------------------------------------------------------------	

	\subsection{Cubulating groups acting on polygonal complexes}
Recently, a very fruitful route to understanding groups has been to find an action on a $CAT(0)$ cube complex. Indeed, an action without a global fixed point provides an obstruction to Property $(T)$ \cite{Niblo-Reeves97}, while a proper action is enough to guarantee the Haagerup property \cite{Cheriz-Martin-Valette_haagerupproperty}. Further properties, such as residual finiteness or linearity, can deduced if the cube complex is \emph{special} \cite{Haglund-Wise}. Perhaps the most notable recent use of cube complexes was in Agol's proof of the Virtual Haken Conjecture \cite{Agol13}. 

Therefore, it is of interest to find actions of groups on $CAT(0)$ cube complexes. In this paper we provide a condition on the links of polygonal complexes (including those with triangular faces) that is sufficient to ensure a group acting properly discontinuously and cocompactly on such a complex contains a virtually free codimension-$1$ subgroup. We provide stronger conditions that are sufficient to ensure a group acting properly discontinuously and cocompactly on such a complex acts properly discontinuously on a $CAT(0)$ cube complex: in many applications (in particular for hyperbolic groups) this action is also cocompact. We shall see that these conditions can be practically checked in many examples, and can in fact be checked by computer search if desired.

For a polygonal complex $X$ and a vertex $v$ we define the \emph{link} of $v$, $Lk_{X}(v)$ (or simply $Lk(v)$ when $X$ is clear from context), as the graph whose vertices are the edges of $X$ incident at $v$, and two vertices $e_{1}$ and $e_{2}$ are connected by an edge $f$ in $Lk(v)$ if the edges $e_{1}$ and $e_{2}$ in $X$ are adjacent to a common face $f$. We can endow the link graph with the \emph{angular metric}: an edge $f=(e_{1},e_{2})$ in $Lk(v)$ has length $\alpha$, where $\alpha$ is the angle between $e_{1}$ and $e_{2}$ in the shared face $f$. We refer the reader to Section \ref{subsec: Link conditions} for further definitions, such as that of a gluably $\pi$-separated complex (this requires a solution to a system of linear equations called the \emph{gluing equations}). We note that in all of our applications, the gluing equations can be solved by considering only the links of vertices of $G\backslash X$.

	  It is well known that a group containing a codimension-$1$ subgroup cannot have Property $(T)$ \cite{Niblo-Roller1998}. Furthermore,  a hyperbolic group acting properly discontinuously and cocompactly on a $CAT(0)$ cube complex is virtually special \cite[Theorem $1.1$]{Agol13} (see Haglund-Wise \cite{Haglund-Wise} for a discussion of the notion of specialness); in particular it is linear over $\mathbb{Z}$ and is residually finite. 
	 
	\begin{theoremalph}\label{mainthm: cubulating groups}
		Let $G$ be a group acting properly discontinuously and cocompactly on a simply connected  $CAT(0)$ polygonal complex $X$.
		\begin{enumerate}[label=(\roman*)]
		    \item 
		If $G\backslash X$ is gluably weakly $\pi$-separated, then $G$ contains a virtually-free codimension-$1$ subgroup (and therefore does not have Property $(T)$). 
		
		\item If $G\backslash X$ is gluably $\pi$-separated, then $G$ acts properly discontinuously on a $CAT(0)$ cube complex. If, in addition, $G$ is hyperbolic, then this action is cocompact. In particular, if $G$ is hyperbolic, then it is virtually special, and so linear over $\mathbb{Z}$.
	\end{enumerate}
	\end{theoremalph}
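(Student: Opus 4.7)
The plan is to construct a $G$-invariant wall space on $X$ and then apply Sageev's construction. The wall data will come from the polygonal geometry: a solution to the gluing equations turns the combinatorial partition coming from $\pi$-separation of each link into a globally consistent collection of \emph{tracks} inside faces that extend across adjacent faces, and these tracks will assemble into walls.

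Concretely, I would first fix a $G$-equivariant system of arcs (``chords'') inside each face of $X$, where each chord joins two boundary edges of the face that correspond to an edge of $Lk(v)$ of angular length exactly $\pi$ (in the sense permitted by the gluing equations); the weakly $\pi$-separated condition should guarantee that at least one such chord exists in some face, and the gluably $\pi$-separated condition that the assignment extends so that every incoming chord at an edge has a unique continuation on the opposite face. Following the arcs across faces yields an immersed graph $W\looparrowright X$; I would check, using the $\pi$-separation hypothesis at every vertex traversed, that the total angular ``straightening'' at each crossing is $\pi$, so that in the $CAT(0)$ geometry of $X$ the developed image of $W$ is a local geodesic and therefore a genuine geodesic subspace. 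This is the step I expect to be the main obstacle: promoting the local combinatorial picture (chords matched up using the gluing solution) to a globally embedded, two-sided, codimension-$1$ subspace that separates $X$. The $CAT(0)$ property of $X$ plus the fact that a local geodesic in a $CAT(0)$ space is a global geodesic should do it, but verifying two-sidedness and separation requires a careful combinatorial argument on the link level.

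For part (i), once even a single such wall $W \hookrightarrow X$ is produced from the weak condition, I would take $H = \mathrm{Stab}_G(W)$. Since $W$ is a one-dimensional complex on which $H$ acts properly and cocompactly (properness from properness of $G\curvearrowright X$; cocompactness from cocompactness of $G\curvearrowright X$ and the fact that $W$ sits in finitely many $G$-orbits of faces), $H$ is virtually free. Separation of $X$ by $W$ plus a standard argument (the two complementary components are both $H$-coarsely-dense in a halfspace) shows $H$ is codimension-$1$; this immediately rules out Property $(T)$ by Niblo--Roller.

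For part (ii), under the stronger hypothesis the walls one builds form a genuine $G$-invariant wall space $(X,\mathcal{W})$ with finitely many $G$-orbits of walls. Running Sageev's construction produces a $CAT(0)$ cube complex $C$ with a $G$-action. Properness of $G\curvearrowright C$ reduces, by the standard criterion, to showing that any pair of points of $X$ is separated by only finitely many walls and that point stabilizers in $G$ have finite image in the group acting on walls — both of these follow from cocompactness of $G\curvearrowright X$ together with the explicit geometric description of walls as locally geodesic subspaces. If $G$ is hyperbolic, I would invoke the Hruska--Wise / Bergeron--Wise cocompactness criterion: using the angular $\pi$-separation condition, I would show \emph{linear separation} of points, i.e.\ that two points at distance $n$ in $X$ are separated by at least $cn$ walls. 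Combined with hyperbolicity and cocompactness on $X$ this forces cocompactness of $G\curvearrowright C$. Finally, Agol's theorem yields virtual specialness, and hence linearity over $\mathbb{Z}$ and residual finiteness via Haglund--Wise.
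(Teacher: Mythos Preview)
Your overall architecture matches the paper's: build locally geodesic ``hypergraphs'' from the gluing-equation solution, lift them to convex trees in $X$, take stabilizers for part~(i), and feed the resulting wallspace into Sageev/Hruska--Wise for part~(ii), finishing with Agol. The construction of the tracks and the use of $CAT(0)$ geometry to promote local geodesics to global ones is exactly right.

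There is, however, a genuine confusion in your properness argument. You claim properness of $G\curvearrowright C$ follows from ``any pair of points is separated by only finitely many walls and point stabilizers in $G$ have finite image in the group acting on walls''. Neither of these yields properness on the dual cube complex: finite separation is merely part of the definition of a wallspace, and finite point stabilizers in $G$ do not prevent an infinite-order element from fixing a vertex of $C$. The paper (and the standard criterion, Hruska--Wise Theorem~1.4) obtains properness from \emph{linear separation}, $\#(x,y)\ge \kappa\, d(x,y)-\epsilon$, which you only invoke later for cocompactness. You should move linear separation to the properness step; this is what gives the proper action even when $G$ is not hyperbolic. Cocompactness in the hyperbolic case is then obtained separately, either via the Bergeron--Wise boundary criterion or, as the paper does, by bounding the size of pairwise-transverse families of walls using quasiconvexity of wall stabilizers (Gitik--Mitra--Rips--Sageev) and applying Hruska--Wise Lemma~7.2.

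Two smaller points. First, you flag two-sidedness as an obstacle, but the paper deliberately allows $\Gamma-C$ to have many components: each component $U^i_\Lambda$ of $X-\Lambda$ yields its own wall $\{\overline{U^i_\Lambda},\overline{X\setminus U^i_\Lambda}\}$, so no two-sidedness is required. Second, the separation step you rightly identify as the main obstacle is handled in the paper by an inductive argument on what it calls $(\Lambda,\epsilon)$-balanced paths (paths in a tubular neighbourhood of $\Lambda$ with even-multiplicity projection to $\Lambda$), using crucially that successive cutset partitions along an edge of the hypergraph are \emph{equatable}; this is where the partition data in the gluing equations, not just the cutsets themselves, does real work.
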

	 
	It is commonly far easier to check a local property than a global one, and so local to global principles are frequently of great use. When working with complexes, it is often most natural to consider local properties related to the links of vertices. In terms of metric curvature, one of the best-known local to global principles is Gromov's Link Condition \cite[$4.2A$]{Gromov_hyperbolic}. Switching to group theoretic properties, \.{Z}uk \cite{zuk1996} and Ballmann--\'{S}wiatkowski \cite{Ballmann-Swiatkowski} independently provided a condition on the first eigenvalue of the Laplacian of links of simplicial complexes that is sufficient to prove a group acting properly discontinuously and cocompactly on such a complex has Property $(T)$.
	
In particular, as opposed to \cite[Example $4.3$]{Hruska-Wise}, we do not require a partition of the edges of links into cut sets: we can remove this assumption at the expense of requiring that every cutset contains at least two elements, and that the \emph{gluing equations} are satisfied for the cutsets (these equations are trivially satisfied for a collection of proper disjoint edge cutsets). Furthermore, we do not require that the cutsets are two-sided: $\Gamma - C$ is allowed to contain arbitrarily many components. Finally, we allow cutsets to be comprised of vertices or edges. Though we are not always able to cocompactly cubulate non-hyperbolic groups with this method, we can still produce codimension-$1$ subgroups, and often a proper action on a cube complex.

-------------------------------------------
%  Applications of the main theorem
%----------------------------------------------------------------------------------------	

\subsection{Applications of the main theorem}

We provide some applications of Theorem \ref{mainthm: cubulating groups}. 
We consider the groups classified by Kangaslampi--Vdovina \cite{Kangaslampi-Vdovina} and Carbone--Kangaslampi--Vdovina \cite{Carbone-Kangaslampli-Vdovina_2012}. These are groups that act simply transitively on triangular hyperbolic buildings: in particular, they act properly discontinuously and cocompactly on a simply connected triangular complex with links isomorphic to the minimal generalized quadrangle. There is little known about these groups: until now they were not even known to be residually finite. We apply Theorem \ref{mainthm: cubulating groups} to these groups to deduce that they are virtually special.

The full automorphism groups of Kac--Moody buildings of $2$-spherical type of large thickness have Property $(T)$ \cite{Dymara-Januszkiewicz2002,Ershov-Rall18}: neither \cite{Dymara-Januszkiewicz2002} nor \cite{Ershov-Rall18} record whether Property (T) fails at small thicknesses. Some of the groups considered in Corollary \ref{mainthm: cubulating the generalized quadrangle} are cocompact lattices in a $2$-spherical Kac--Moody building with small thickness \cite{Carbone-Kangaslampli-Vdovina_2012}. Therefore, Corollary \ref{mainthm: cubulating the generalized quadrangle} complements \cite{Dymara-Januszkiewicz2002,Ershov-Rall18}, providing an example example of the failure of Property $(T)$ when the thickness is small.

	\begin{coralph}\label{mainthm: cubulating the generalized quadrangle}
		Let $X$ be a simply connected polygonal complex such that every face has at least $3$ sides, and the link of every vertex is isomorphic to the minimal generalized quadrangle. If a group $G$ acts properly discontinuously and cocompactly on $X$, then it is virtually special; in particular it is linear over $\mathbb{Z}$. 
	\end{coralph}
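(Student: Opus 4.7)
The plan is to verify the hypotheses of Theorem~\ref{mainthm: cubulating groups}(ii) for $X$ and then invoke Agol's theorem for the virtual specialness conclusion. Since the minimal generalized quadrangle has girth $8$ and every face of $X$ has at least three sides (so every corner angle is at least $\pi/3$), every embedded cycle in $Lk(v)$ has angular length at least $8\cdot \tfrac{\pi}{3} = \tfrac{8\pi}{3} > 2\pi$. By Gromov's link condition $X$ is $CAT(0)$; the uniform angular surplus of $\tfrac{2\pi}{3}$ at every vertex yields a strictly positive combinatorial Gauss--Bonnet defect at every interior vertex of a reduced disk diagram, whence a linear isoperimetric inequality and Gromov-hyperbolicity of both $X$ and $G$. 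It therefore suffices to show that $G\backslash X$ is gluably $\pi$-separated.

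Since $G$ acts cocompactly, $G\backslash X$ has only finitely many vertex orbits, each with link isomorphic to $GQ(2,2)$; by the remark following Theorem~\ref{mainthm: cubulating groups}, the gluing equations can be solved on this finite quotient. My plan is therefore to assign the same cutset $C \subseteq GQ(2,2)$ to every orbit and to exploit the symmetry of $GQ(2,2)$, whose automorphism group is $S_6$, to force the gluing equations to hold by equivariance. A natural target is an $\operatorname{Aut}(GQ(2,2))$-orbit of subgraphs whose removal disconnects the incidence graph; candidates include a neighbourhood of a point--line flag, the edges meeting an ovoid, or the complement of a spread.

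The principal obstacle is producing a cutset $C$ which is simultaneously (a) a genuine separator of $GQ(2,2)$, (b) $\pi$-separated in the angular metric, meaning that any two edges of $C$ lying in the same face of $X$ subtend an angle of at least $\pi$, and (c) compatible with the gluing equations. Because $GQ(2,2)$ has only $30$ vertices, each of degree $3$, this reduces to a finite combinatorial search; after imposing invariance under a large subgroup of $\operatorname{Aut}(GQ(2,2))$ (for instance the stabiliser of a flag), conditions (b) and (c) become small finite feasibility problems that can be settled by hand or by computer, as the author advocates in the introduction. Once such a cutset is identified, Theorem~\ref{mainthm: cubulating groups}(ii) supplies a proper, cocompact action of $G$ on a $CAT(0)$ cube complex, and Agol's theorem then delivers the virtual specialness of $G$, and in particular its linearity over $\mathbb{Z}$.
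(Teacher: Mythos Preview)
Your overall strategy is the same as the paper's: verify that $X$ is $CAT(0)$ and $G$ is hyperbolic via the girth-$8$ link condition, show that $G\backslash X$ is gluably (edge) $\pi$-separated, and then apply Theorem~\ref{mainthm: cubulating groups}(ii) together with Agol's theorem. The reduction to a finite combinatorial problem on $GQ(2,2)$, and the idea of using the edge-transitivity of $\operatorname{Aut}(GQ(2,2))$ to force the weight equations, are both exactly what the paper does (see Lemma~\ref{lem: edge transitive automorphism group can solve equations} and Lemma~\ref{lem: solving gluing equations for minimal edge cutsets}).

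However, two points in your proposal are genuine gaps. First, your condition (b) misstates what $\pi$-separated means: an \emph{edge} cutset $C$ lives in $E(Lk(v))$, and the requirement is that the midpoints of any two distinct edges of $C$ are at distance $\geq \pi$ in the angular metric on $Lk(v)$; equivalently, in the combinatorial metric on $GQ(2,2)$ the edges must be pairwise at distance $\geq 2$ (no shared vertex and no edge joining them). This has nothing to do with angles subtended inside a single face of $X$. Second, and more seriously, none of your candidate cutsets actually satisfy this $3$-separation condition: the $15$ edges meeting an ovoid include three edges at each ovoid point (midpoint distance $1$), and the $15$ edges of a spread include three edges at each spread line. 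The paper resolves this by an explicit computer search (Lemma~\ref{lem: separation of the generalized quadrangle}), exhibiting ten size-$9$ proper $3$-separated edge cutsets $C_1,\dots,C_{10}$ with $\bigcup_i C_i = E(\Gamma)$ and with every edge lying in exactly two $C_i$ (so weight $1$ on each solves the weight equations). The paper also observes that $C_i\cap C_j\neq\emptyset$ for all $i,j$, so no disjoint family exists and the simpler route via \cite[Example~4.3]{Hruska-Wise} is unavailable. This explicit enumeration is the substance of the corollary; your proposal correctly identifies that such a search is required but does not carry it out, and the specific candidates you name would fail it.
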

	We prove that if $X$ and $G$ are as above, then $X$ can be endowed with a $CAT(0)$ metric such that $G\backslash X$ is gluably $\pi$-separated. However, we show that that it is not disjointly $\pi$-separated, so that \cite[Example 4.3]{Hruska-Wise} cannot be applied to such a complex.

	As a further application of Theorem \ref{mainthm: cubulating groups}, we consider generalized triangle groups, as defined in \cite{Lubotzky-Manning-Wilton} (see Definitions \ref{def: generalized triangle} and \ref{def: generalized triangle 2}). Let $C_{k,2}$ be the cage graph on $k$ edges, i.e. the smallest $k$ regular graph of girth $2$. For finite-sheeted covering graphs $\Gamma_{i} \looparrowright C_{k,2}$, we consider an associated pair of families of triangular complexes of groups $D^{j}_{0,k}(\Gamma_{1},\Gamma_{2},\Gamma_{3})$, and $D^{j}_{k}(\Gamma_{1},\Gamma_{2},\Gamma_{3})$. We remark that these complexes of groups are not necessarily unique for given $\Gamma_{1},\Gamma_{2},\Gamma_{3}$.

We consider explicitly the graphs used in \cite{Caprace-Conder-Kaluba-Witzel_triangle}: we refer to them by their Foster Census names (see \cite{fostercensus}). The only graph not in the Foster Census is $G54$, the Gray graph, which is edge but not vertex transitive. Using Theorem \ref{mainthm: cubulating groups} and Theorem \ref{mainthm: cubulating generalized triangle groups} we can deduce the following.
	
		\begin{coralph}\label{coralph: small girth generalized triangle groups}
	Let $\Gamma_{i}\looparrowright C_{k,2}$ be finite-sheeted covers, such that $girth(\Gamma_{i})\geq 6$ for each $i$. Let $G=\pi_{1}(D^{j}_{0,k}(\Gamma_{1},\Gamma_{2},\Gamma_{3}))$ or $G=\pi_{1}(D^{j}_{k}(\Gamma_{1},\Gamma_{2},\Gamma_{3}))$ for some $j$.
	\begin{enumerate}[label=(\roman*)]
	    \item If $\Gamma_{i}\in\{F24A,\;F26A,\;F48A\}$ for each $i$, then $G$ acts properly discontinuously on a $CAT(0)$ cube complex: if $G$ is hyperbolic, then this action is also cocompact and so $G$ is virtually special.
	     \item If $\Gamma_{1}\in\{F40A,G54\}$, then $G$ acts properly discontinuously on a $CAT(0)$ cube complex: if $G$ is hyperbolic, then this action is also cocompact and so $G$ is virtually special.
	\end{enumerate}
	\end{coralph}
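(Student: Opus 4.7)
The plan is to deduce the corollary from Theorem \ref{mainthm: cubulating groups}(ii), applied to the universal development $X$ of the triangular complex of groups $D^{j}_{0,k}(\Gamma_{1},\Gamma_{2},\Gamma_{3})$ or $D^{j}_{k}(\Gamma_{1},\Gamma_{2},\Gamma_{3})$. Developability of these complexes of groups (forced by the fact that each $\Gamma_{i}$ is a finite-sheeted cover of $C_{k,2}$) provides a simply connected $CAT(0)$ triangular complex $X$ on which $G=\pi_{1}$ acts properly discontinuously and cocompactly, and the vertex links in $X$ are built canonically from $\Gamma_{1},\Gamma_{2},\Gamma_{3}$. It therefore suffices to show that $G\backslash X$ is gluably $\pi$-separated.

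This verification is reduced by Theorem \ref{mainthm: cubulating generalized triangle groups} to an explicit combinatorial condition on the cover graphs themselves, namely the existence of compatible cutsets in (or assembled from) the $\Gamma_{i}$ for which the gluing equations admit a positive solution with angular sum $\pi$. The hypothesis $\mathrm{girth}(\Gamma_{i})\geq 6$ enters precisely at this stage, controlling the girth of the relevant links and ruling out the short cycles that would otherwise obstruct $\pi$-separation. I would then split into the two cases: part (i) asks for this condition when all three cover graphs are chosen (with repetition) from $\{F24A,F26A,F48A\}$; part (ii) asks for it when at least one of the three covers is $F40A$ or the Gray graph $G54$. In each case I would exhibit the required cutsets by exploiting the large automorphism groups of these Foster Census graphs---for instance, choosing cutsets as unions of orbits of a suitable subgroup of $\mathrm{Aut}(\Gamma_{i})$ so that the gluing equations collapse to a small linear system solvable by inspection.

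With gluable $\pi$-separation established, Theorem \ref{mainthm: cubulating groups}(ii) immediately yields a proper action of $G$ on a $CAT(0)$ cube complex, and the same theorem provides cocompactness and virtual specialness in the hyperbolic case via Agol's Theorem.

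The main obstacle is the explicit combinatorial verification for the five distinguished graphs: producing cutsets that respect the symmetries dictating the gluing equations, and checking that the resulting linear system admits a positive solution summing to $\pi$. The asymmetry between parts (i) and (ii) reflects the fact that $F40A$ and $G54$ appear to have enough structural flexibility (in particular, $G54$'s edge-but-not-vertex-transitivity) to force the gluing equations to close on their own, whereas cutsets in the smaller graphs $F24A,F26A,F48A$ only combine into a valid global solution when all three cover graphs are drawn from this short list. As the introduction notes, these finite combinatorial conditions can be confirmed by direct inspection or by computer search, so there is no essential obstruction beyond case analysis once Theorem \ref{mainthm: cubulating generalized triangle groups} is in hand.
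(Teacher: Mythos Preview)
Your high-level plan---build the developed complex $X$, verify a separation condition on links, apply Theorem~\ref{mainthm: cubulating groups} or Theorem~\ref{mainthm: cubulating generalized triangle groups}---matches the paper. But you have misidentified the mechanism that distinguishes parts~(i) and~(ii), and this is a genuine gap.

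For part~(ii), Theorem~\ref{mainthm: cubulating generalized triangle groups} is not a reduction to ``gluably $\pi$-separated'' for all of $G\backslash X$; it is a self-contained result whose hypothesis is that $\Gamma_{1}$ alone is \emph{weighted strongly edge $3$-separated}. The word ``strongly'' is what carries the argument: it guarantees that any two points at distance $\geq 3$ in $\Gamma_{1}$ are separated by some cutset, and the paper's Lemma~\ref{lem:  strongly separated triangle} then shows that hypergraphs built exclusively through $\Gamma_{1}$-type vertices already cut every long geodesic in $X$, with no condition whatsoever on $\Gamma_{2},\Gamma_{3}$. So the asymmetry has nothing to do with $F40A$ or $G54$ having ``enough structural flexibility to force the gluing equations to close on their own''; it is that these two graphs admit strongly edge-separated cutsets (Lemmas~\ref{lem: F40 is separated} and~\ref{lem: G54 is separated}), whereas $F24A$, $F26A$, $F48A$ do not.

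For part~(i), Theorem~\ref{mainthm: cubulating generalized triangle groups} is not invoked at all. Instead the paper shows that $F24A$ and $F48A$ are $\dagger$-separated (disjoint proper vertex cutsets) and $F26A$ is $*$-separated (two-component vertex cutsets with a balanced weight condition), and then Lemma~\ref{lem: splicing together generalized triangle groups} proves that any triangle complex whose links are each $\dagger$- or $*$-separated is gluably vertex $\pi$-separated. This mixing lemma is the real content: the $\dagger$ and $*$ conditions are engineered so that the induced partitions on either side of every edge match up, and this is why all three $\Gamma_{i}$ must come from the short list. Your proposal does not isolate this compatibility issue and would not, as written, explain why an arbitrary mix of one $F26A$ link with two unconstrained links fails.

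Two smaller points: the $\pi$ in ``$\pi$-separated'' is a distance threshold in the angular metric, not a sum that the gluing equations must attain; and the gluing equations themselves equate weighted counts of cutset--partition pairs across an edge, not angular sums.
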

There are $252$ groups considered in \cite{Caprace-Conder-Kaluba-Witzel_triangle}, of which they show that $168$ do not satisfy Property $(T)$. Our method recovers this result for $101$ groups, and proves that $30$ new groups do not have Property $(T)$. We prove that each of the $131$ groups we consider has a proper action on a $CAT(0)$ cube complex, and so, by e.g. \cite{Cheriz-Martin-Valette_haagerupproperty}, has the Haagerup property. Furthermore, $125$ of these groups are hyperbolic and have a proper and cocompact action on a $CAT(0)$ cube complex, and hence by \cite{Agol13} are virtually special.

	Wise's malnormal special quotient theorem \cite{Wise-MSQT} (c.f. \cite{AgolGrovesManningMSQT}) is one of the most important theorems in modern geometric group theory. However, the proof of this theorem is famously complex and so in Section \ref{subsection: cubulating dehn fillings of generalized triangle groups} we apply Theorem \ref{mainthm: cubulating groups} to generalized triangle groups to recover partial consequences of the malnormal special quotient theorem in Corollary \ref{mainthm: cubulating dehn fillings of generalized triangle groups}. Although this theorem follows from Wise's proof of the MSQT, a far more general theorem, the proof of Corollary \ref{mainthm: cubulating dehn fillings of generalized triangle groups} is considerably shorter and simpler, and provides an effective bound on the index of the fillings required.

-------------------------------------------
%  Structure of the paper
%----------------------------------------------------------------------------------------	

\subsection{Structure of the paper}	
The main idea of the proof is the following. Since $G\backslash X$ is $\pi$-separated, we can find a collection of local geodesics in $G\backslash X$ that are locally separating at vertices of $G\backslash X$. The \emph{gluing equations} provide us with a way to glue these local geodesics together to find a locally geodesic locally separating subcomplex of $G\backslash X$: by lifting we find a geodesic separating subcomplex of $X$ with cocompact stabilizer. We then use the construction of Sageev \cite{Sageev-95}, generalized by Hruska--Wise in \cite{Hruska-Wise}, to construct the desired $CAT(0)$ cube complex. 

	The paper is structured as follows. In Section \ref{section: Cubulating hyperbolic groups acting on polygonal complexes} we define hypergraphs, which will be separating subspaces constructed in the polygonal complex, and show certain subgroups of their stabilizers are codimension-$1$. We then prove Theorem \ref{mainthm: cubulating groups} by using Hruska--Wise's \cite{Hruska-Wise} extension of Sageev's \cite{Sageev-95} construction of a $CAT(0)$ cube complex, and proving that there are `enough' hypergraphs to `separate' the polygonal complex. In Section \ref{section: finding cutsets}, we discuss how to find `separated' cutsets of a graph by computer search. In Section \ref{section: generalized quadrangles} we prove Corollary \ref{mainthm: cubulating the generalized quadrangle} by proving that the minimal generalized quadrangle is \emph{weighted edge $3$-separated} and  and endowing the polygonal complexes with a suitable $CAT(0)$ metric. In Section \ref{sec: gen triangles} we prove Theorem \ref{mainthm: cubulating generalized triangle groups} and Corollary \ref{coralph: small girth generalized triangle groups}. We again apply Theorem \ref{mainthm: cubulating groups} to prove Corollary \ref{mainthm: cubulating dehn fillings of generalized triangle groups} by considering cutsets in covers of graphs. 
	
	\section*{Acknowledgements}
I would like to thank my PhD advisor Henry Wilton for suggesting this topic, for the many useful discussions, and invaluable comments on an earlier draft of this manuscript.  I would also like to thank Pierre-Emmanuel Caprace for the extremely helpful comments and corrections on an earlier draft, as well as pointing out the relevance of Corollary \ref{mainthm: cubulating the generalized quadrangle} to Property (T) in Kac--Moody buildings.
%----------------------------------------------------------------------------------------
%   CUBULATING GROUPS
%----------------------------------------------------------------------------------------
	\section{Cubulating groups acting on polygonal complexes}\label{section: Cubulating hyperbolic groups acting on polygonal complexes}
This section is structured as follows. We first define the required conditions on graphs and complexes in Section \ref{subsec: Link conditions}, and in Section \ref{subsection: Removing cut edges and producing two sided cut sets} we discuss how to remove cut edges from links. We provide some examples where our conditions can be readily verified for graphs in Section \ref{subsection: Examples of separated graphs} and for complexes in Section \ref{subsection: Examples of solutions of the gluing equations}. We use these definitions in Sections \ref{subsection: Constructing hypergraphs in polygonal complexes}, \ref{subsection: Hypergraphs are separating}, and \ref{subsection: Hypergraph stabolizers and wallspaces} to build separating convex trees in polygonal complexes, and in Section \ref{subsection: Cubulating  groups acting on polygonal complexes} we use these convex trees, and a construction due to \cite{Sageev-95} and \cite{Hruska-Wise}, to prove Theorem \ref{mainthm: cubulating groups}.
Firstly, we introduce the relevant definitions for links.
\subsection{Some separation conditions}\label{subsec: Link conditions}

We now define the notion of `separatedness' of a graph. The \emph{combinatorial metric} on a graph $\Gamma$ is the path metric induced by assigning each edge of $\Gamma$ length $1$.
\begin{definition}
	Let $\Gamma$ be a finite metric graph.
	\begin{enumerate}[label=\roman*)]
	    \item 

	An edge $e$ is a \emph{cut edge} if $\Gamma-\{e\}$ is disconnected.
		    \item A set $C\subseteq \Gamma$ is a \emph{cutset} if $\Gamma - C$ is disconnected as a topological space.
	    \item 	A cutset $C$ is an \emph{edge cutset} if $C\subseteq E(\Gamma)$ and is a \emph{vertex cutset} if $C\subseteq V(\Gamma).$
	    \item An edge cutset $C$ is \emph{proper} if for any edge $e\in C$, the endpoints of $e$ lie in disjoint components of $\Gamma-C$.
	    \item A vertex cutset $C$ is \emph{proper} if for any vertex $u\in C$, and any distinct vertices $v,w$ adjacent to $u$, the vertices $v$ and $w$ lie in disjoint components of $\Gamma-C$.
	   \end{enumerate}
For an edge $e$ in $\Gamma$ let $m(e)$ be the midpoint of $e$. For $\sigma >0$ a set $\mathcal{C}\subseteq E(\Gamma)$ is \emph{$\sigma$-separated} if for all distinct $e_{1},e_{2}\in\mathcal{C}$, $d_{\Gamma}(m(e_{1}),m(e_{2}))\geq \sigma .$ A set $\mathcal{C}\subseteq V(\Gamma)$ is \emph{$\sigma$-separated} if for all distinct $v_{1},v_{2}\in\mathcal{C}$, $d_{\Gamma}(v_{1},v_{2})\geq \sigma .$
\end{definition}
\begin{remark}
We note that proper cut sets are very natural to consider. Any minimal edge cut set is proper, and more importantly, proper cutsets are preserved under passing to finite covers.

Finding proper edge cutsets is easy, but for a given graph $\Gamma$ there may not be any proper $\sigma$-separated vertex cutsets: see for example the graph $F26A$, considered in Lemma \ref{lem: F26A is * separated}.
\end{remark}
\begin{definition}[\emph{Edge separated}]
	    
Let $\Gamma$ be a finite metric graph, and let $\sigma>0$. We will say that $\Gamma$ is \emph{edge $\sigma$-separated} if $\Gamma$ is connected, contains no vertices of degree $1$, and there exists a collection of proper $\sigma$-separated edge cutsets $C_{i}\subseteq E(\Gamma)$ with $\cup_{i}C_{i}=E(\Gamma)$ and $\vert C_{i}\vert\geq 2$ for each $i$.

	We say the graph is \emph{disjointly edge $\sigma$-separated} if the above cutsets form a partition of the edges.
\end{definition}
Note that to each edge cutset $C$ we can assign a partition $\mathcal{P}(C)$ to $\pi_{0}(\Gamma-C)$: we \emph{always require} that such a partition is at least as coarse as connectivity in $\Gamma - C$, and each partition contains at least two elements. The \emph{canonical partition} of $C$ is that induced by connectivity in $\Gamma -C$.

\begin{definition}[\emph{Strongly edge separated}]
 	A graph $\Gamma$ is \emph{strongly edge $\sigma$-separated} if $\Gamma$ is edge $\sigma$-separated and for every pair of points $u,v$ in $\Gamma$ with $d_{\Gamma}(u,v)\geq\sigma$ there exists a proper $\sigma$-separated edge cutset $C_{i}$ with $u$ and $v$ lying in separated components of $\Gamma - C_{i}$.
 		We say the graph is \emph{disjointly strongly edge $\sigma$-separated} if the above cutsets form a partition of the edges.
\end{definition}
There is a more combinatorial condition that implies strong edge separation.
\begin{definition}
 We say a cutset $C$ separates $\{v_{1},v_{2}\}$ and $\{w_{1},w_{2}\}$ if each $v_{i}$ lies in a different component of $\Gamma - C$ to each $w_{j}$.
\end{definition}
\begin{lemma}\label{lem: strong edge sep condition}
    Let $n\geq 2$, and let $\Gamma$ be a graph endowed with the combinatorial metric, such that $girth(\Gamma)\geq 2n$. Suppose that $\Gamma$ is edge $n$-separated with cutsets $\mathcal{C}=\{C_{1},\hdots , C_{m}\}$, and for every pair of vertices $u,v$ in $\Gamma$, and any vertices $u',v'$ with $d_{\Gamma}(u,v)\geq n$ and $d(u,u')=d(v,v')=1$ there exists an $n$-separated cutset $C_{i}$ separating $\{u,u'\}$ and $\{v,v'\}$. Then $\Gamma$ is strongly edge $n$-separated with the same cutsets.
\end{lemma}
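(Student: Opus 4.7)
The plan is to verify the strong edge $n$-separation condition by reducing a separation question about arbitrary point pairs to the combinatorial hypothesis on vertex pairs, leveraging the girth bound. Edge $n$-separation is among the assumptions, so it suffices to produce, for every $u,v\in\Gamma$ with $d_\Gamma(u,v)\ge n$, a cutset $C_i\in\mathcal{C}$ separating $u$ from $v$ in $\Gamma - C_i$. When $u,v$ are both vertices the conclusion is immediate: since $\Gamma$ has no vertices of degree one, any choice of neighbours $u',v'$ lets one invoke the combinatorial hypothesis to obtain an $n$-separated cutset separating $\{u,u'\}$ from $\{v,v'\}$, and hence $u$ from $v$.

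For general points, let $e_u=(a_1,a_2)$ and $e_v=(b_1,b_2)$ be the edges carrying $u$ and $v$ (with $a_1=a_2=u$ if $u$ is already a vertex, similarly for $v$). When both $u,v$ are interior to their edges, the triangle inequality gives $d(a_i,b_j)>n-2$ for every $i,j$, so by integrality of distances between vertices we obtain $d(a_i,b_j)\ge n-1$. To boost this to a pair at distance $\ge n$, I would fix $i$ and compare $d(a_i,b_1)$ with $d(a_i,b_2)$. If they differ, then by the triangle inequality across $e_v$ they differ by exactly $1$, so the larger of the two is at least $(n-1)+1=n$. If they are equal, pick geodesics $\gamma_1,\gamma_2$ from $a_i$ to $b_1,b_2$; the closed walk $\gamma_1\cdot e_v\cdot\bar\gamma_2$ has two geodesic legs of equal length, so it cannot be null-homotopic, and therefore it contains an embedded cycle of length at most $2d(a_i,b_1)+1$. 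The girth hypothesis $\mathrm{girth}(\Gamma)\ge 2n$ then forces $d(a_i,b_1)=d(a_i,b_2)\ge n$. The degenerate configurations (when $u$ or $v$ is already a vertex, or when $e_u,e_v$ share an endpoint) either reduce to the vertex case or violate $d(u,v)\ge n\ge 2$.

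I would then apply the combinatorial hypothesis to the vertex pair $(a_i,b_j)$ so produced, using the remaining endpoints $a_{3-i},b_{3-j}$ as the adjacent vertices required by the hypothesis (or arbitrary neighbours when $u$ or $v$ is already a vertex), to obtain a proper $n$-separated cutset $C\in\mathcal{C}$ separating $\{a_1,a_2\}$ from $\{b_1,b_2\}$ in $\Gamma - C$. Interpreting a point on an edge of $C$ as belonging to the component of its nearer endpoint, the separation of the endpoint pairs transfers to $u$ and $v$, placing them in separated components of $\Gamma - C$.

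The most delicate step is the cycle-extraction argument: one has to verify that the closed walk obtained from two equal-length geodesics is genuinely non-trivial in the fundamental groupoid of $\Gamma$, so that an embedded cycle can be extracted to which the girth lower bound applies. This is precisely where the assumption $\mathrm{girth}(\Gamma)\ge 2n$ is used, and without it the bound $d(a_i,b_1)+d(a_i,b_2)\ge 2n-1$ cannot be derived from the combinatorial data alone.
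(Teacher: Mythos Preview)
Your proof is correct and follows the same route as the paper: reduce the separation of arbitrary points to the separation of suitable vertex pairs via the girth bound, then invoke the hypothesis. The paper's own proof simply asserts ``as $\mathrm{girth}(\Gamma)\ge 2n$, without loss of generality $d(u_1,v_1)\ge n$'' without further justification; your odd-length closed-walk argument is exactly what is needed to fill this in (the parity observation---total length $2k+1$---is the clean way to see the walk is not null-homotopic).

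One simplification in your final step: there is no need to ``interpret a point on an edge of $C$'' specially. Since $C$ is proper and separates $\{a_1,a_2\}$ from $\{b_1,b_2\}$, the edge $e_u$ cannot lie in $C$ (otherwise properness would place $a_1,a_2$ in distinct components), and likewise $e_v\notin C$. Hence $u$ and $v$ genuinely lie in $\Gamma-C$, in the components of their respective endpoint pairs, and are therefore separated.
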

\begin{proof}
First note that as $\Gamma$  is edge $n$-separated, it is connected and contains no vertices of degree $1$. Let $u,v$ be two points in $\Gamma$ with $d(u,v)\geq n$. If $u,v$ are vertices, then we are done. Suppose $u$ and $v$ both lie on edges: let $e(u),e(v)$ be the respective edges, and $u_{1},u_{2}$, $v_{1},v_{2}$ the endpoints of $e(u),e(v)$ respectively. If $v$ is a vertex, take $v=v_{1}=v_{2}.$ As $girth (\Gamma)\geq 2n$, without loss of generality $d(u_{1},v_{1})\geq n$: taking $C_{i}$ to be the cutset separating $u_{1},u_{2}$ and $v_{1},v_{2}$, we see that $C_{i}$ separates $u$ and $v$. 
\end{proof}

\begin{definition}[\emph{Weakly vertex separated}]
    Let $\Gamma$ be a finite metric graph, and let $\sigma>0$. We will say that $\Gamma$ is \emph{weakly vertex $\sigma$-separated} if:
    \begin{enumerate}[label=\roman*)]
        \item $\Gamma$ is connected and contains no vertices of degree $1$,
        \item and there exists a collection of $\sigma$-separated vertex cutsets $C_{i}\subseteq V(\Gamma)$ such that $\cup_{i}C_{i}=V(\Gamma)$ and $\vert C_{i}\vert \geq 2$ for each $i$.
       \end{enumerate}

To each vertex cutset $C$ we can assign a partition $\mathcal{P}(C)$ to $\pi_{0}(\Gamma-C)$: we \emph{always require} that such a partition is at least as coarse as connectivity in $\Gamma - C$ and each partition contains at least two elements. The \emph{canonical partition} of $C$ is that induced by connectivity in $\Gamma -C$.
\end{definition} 
\begin{definition}[\emph{Vertex separated}]
    Let $\Gamma$ be a finite metric graph, and let $\sigma>0$. We will say that $\Gamma$ is \emph{vertex $\sigma$-separated} if:
    \begin{enumerate}[label=\roman*)]
        \item $\Gamma$ is connected and contains no vertices of degree $1$,
        \item there exists a collection of $\sigma$-separated vertex cutsets $C_{i}\subseteq V(\Gamma)$ such that $\cup_{i}C_{i}=V(\Gamma)$ and $\vert C_{i}\vert \geq 2$ for each $i$,
        \item for any vertex $v$ and any distinct vertices $w,w'$ adjacent to $v$ there exists a $\sigma$-separated vertex cutset $C_{i}$ such that $w$ and $w'$ lie in separate components of $\Gamma - C_{i}$,
        \item and for any points $u$ and $v$ in $\Gamma$ with $d(u,v)\geq \sigma$, there exists a cutset $C_{i}$ with $u$ and $v$ lying in distinct components of $\Gamma - C_{i}$.
    \end{enumerate}
    	Note that importantly, in general we don't require vertex cutsets to be proper. We say the graph is \emph{disjointly vertex separated} if the above cutsets form a partition of the vertices, and each cutset is proper.

To each vertex cutset $C$ we can assign a partition $\mathcal{P}(C)$ to $\pi_{0}(\Gamma-C)$: we \emph{always require} that such a partition is at least as coarse as connectivity in $\Gamma - C$ and each partition contains at least two elements. The \emph{canonical partition} of $C$ is that induced by connectivity in $\Gamma -C$.
\end{definition}
\begin{remark}
The reason we don't require vertex cutsets to be proper is the following. For edge cut sets we could weaken the definition of edge separated to require a condition similar to $iii)$ above: i.e. that the endpoints of each edge are separated by some cutset. However such a cutset can always be made minimal, and therefore proper, by removing unnecessary edges: the same is not true for vertex cutsets.
\end{remark}

Once again, this definition is not as difficult to verify as it may seem.
\begin{lemma}\label{lem: vertex separated condition}
  Let $n\geq 2$, and let $\Gamma$ be a graph endowed with the combinatorial metric, such that $\Gamma$ is connected, contains no vertices of degree $1$, and $girth(\Gamma)\geq 2 n$. Suppose there exists a collection of $n$-separated vertex cutsets $\mathcal{C}=\{C_{1},\hdots , C_{m}\}$ so that 
  \begin{enumerate}[label=$\roman*)$]
      \item  $\cup_{i}C_{i}=V(\Gamma)$,
      \item  $\vert C_{i}\vert \geq 2$ for each $i$,
      \item for each vertex $v$ and distinct $w$, $w'$ adjacent to $v$ there exists a $n$-separated cutset with $w$ and $w'$ lying in separate components of $\Gamma - C$,
      \item and furthermore that for any pair of vertices $u,v$ with $d_{\Gamma}(u,v)\geq n$ there exists a cutset $C_{i}$ with $u$ and $v$ lying in separate components of $\Gamma -C_{i}$.
  \end{enumerate}
 Then $\Gamma$ is vertex $\sigma$-separated with the collection of cutsets $\mathcal{C}$.
\end{lemma}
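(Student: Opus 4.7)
The plan is to check each of the four conditions in the definition of vertex $n$-separated for $\Gamma$ with the cutsets $\mathcal{C}$. Connectivity, absence of degree-$1$ vertices, the covering/size conditions, and adjacent-vertex separation follow directly from the hypotheses together with the given $n$-separation of each $C_i$. The content of the proof is the remaining condition: for any two \emph{points} $u, v \in \Gamma$ with $d(u,v) \geq n$, some $C_i \in \mathcal{C}$ must put $u$ and $v$ into distinct components of $\Gamma - C_i$. Hypothesis (iv) supplies this only when $u, v$ are vertices, so the task is a reduction to the vertex case.

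I would treat vertices as degenerate edges: if $u$ is not a vertex, write $e(u) = (u_1, u_2)$ with $u$ at parameter $t \in (0, 1)$ from $u_1$; otherwise set $u_1 = u_2 = u$ and $t = 0$. Do the same for $v$ with parameter $s$. If $u, v$ are both non-vertices then $d(u,v) \geq n \geq 2$ forces $e(u), e(v)$ to share no vertex (else a path through it gives $d(u,v) < 2$), so the distance decomposes as
\[
d(u, v) \;=\; \min_{i,j \in \{1,2\}} \bigl( t_i + d_\Gamma(u_i, v_j) + s_j \bigr),
\]
with $t_1 = t$, $t_2 = 1 - t$, $s_1 = s$, $s_2 = 1 - s$; if one of $u, v$ is a vertex the corresponding terms vanish and the formula persists. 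The claim to prove is that some endpoint pair $(u_i, v_j)$ satisfies $d_\Gamma(u_i, v_j) \geq n$, after which hypothesis (iv) applies.

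When some $t_i + s_j < 1$, the inequality $d_\Gamma(u_i, v_j) \geq n - t_i - s_j > n - 1$ combined with integrality of the combinatorial distance on vertices forces $d_\Gamma(u_i, v_j) \geq n$. This handles every configuration except $(t, s) = (\tfrac12, \tfrac12)$ with both $u, v$ non-vertices. In that midpoint case each bound only gives $d_\Gamma(u_i, v_j) \geq n - 1$, and if the claim failed then all four distances would equal exactly $n - 1$. I would rule this out via the girth hypothesis: form the closed walk consisting of the edge $(u_1, u_2)$, a shortest path $P \colon u_2 \to v_1$, and the reverse of a shortest path $Q \colon u_1 \to v_1$, of total length $2n - 1$. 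The equalities $d_\Gamma(u_1, v_1) = d_\Gamma(u_2, v_1) = n - 1$ exclude backtracks at $u_1$ or $u_2$ (either would force one of these distances to exceed the other by one), so the only possible cancellation is a shared tail of $P$ and $Q$ terminating at $v_1$; after stripping this tail one is left with a reduced closed walk of positive length at most $2n - 1$. Because it is reduced, its image contains no leaves, hence contains an embedded cycle of length at most $2n - 1 < 2n$, contradicting $\mathrm{girth}(\Gamma) \geq 2n$.

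Once an endpoint pair with $d_\Gamma(u_i, v_j) \geq n$ is secured, hypothesis (iv) produces a cutset $C \in \mathcal{C}$ separating $u_i$ from $v_j$ in $\Gamma - C$. To finish, I would verify that $C$ also separates $u$ from $v$ as points: since $u_i \notin C$ and the sub-arc of $e(u)$ from $u$ to $u_i$ remains intact in $\Gamma - C$, the point $u$ lies in the component of $u_i$; the symmetric observation for $v$ completes the argument. The main obstacle is the midpoint case, where integrality fails and one must invoke the girth together with a careful bookkeeping of which joints in the closed walk can admit backtracks; the remaining steps are routine.
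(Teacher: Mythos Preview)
Your proof is correct and follows the same strategy as the paper: verify that conditions i)--iii) of the definition are immediate, then reduce condition iv) from arbitrary points to vertices by locating an endpoint pair $(u_i,v_j)$ with $d_\Gamma(u_i,v_j)\geq n$ and invoking hypothesis iv). The paper compresses this reduction into a single sentence (``without loss of generality, as $girth(\Gamma)\geq 2n$ and $d(u,v)\geq n$, we have that $d(u_1,v_1)\geq n$''), whereas you actually supply the argument---in particular the integrality trick when $t_i+s_j<1$ and the girth contradiction in the midpoint case---and you also spell out why the cutset separating $u_i,v_j$ must separate the original points $u,v$.
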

\begin{proof}
It suffices to show that for any pair of points $u,v$ with $d(u,v)\geq \sigma$ there exists a cutset $C_{i}$ separating them. If $u$ and $v$ are vertices, then we are finished. Otherwise, let $e(u),e(v)$ be the edges that $u$ and $v$ lie on. Let $u_{1},u_{2}$ and $v_{1},v_{2}$ be the endpoints of $e(u),e(v)$ respectively. If $v$ is a vertex simply take $v_{1}=v_{2}=v$. Then without loss of generality, as $girth(\Gamma)\geq 2n$ and $d(u,v)\geq n$, we have that $d(u_{1},v_{1})\geq n$. Let $C_{i}$ be the cutset separating $u_{1}$ and $v_{1}$: this cutset must also separate $u$ and $v$.
\end{proof}
Finally, we define weighted $\sigma$-separated.

\begin{definition}[Weighted $\sigma$-separated.]
    Let $\sigma>0$ and let $\Gamma$ be an edge $\sigma$-separated graph (respectively strongly edge $\sigma$-separated, weakly vertex $\sigma$-separated, vertex $\sigma$-separated) with $\sigma$-separated cutsets $\mathcal{C}=\{C_{1},\hdots , C_{m}\}$. We call $\Gamma$ \emph{weighted edge $\sigma$-separated} (respectively \emph{strongly edge $\sigma$-separated, weakly vertex $\sigma$-separated, vertex $\sigma$-separated}) if there exists an assignment of positive integers $n(C_{i})$ to the cutsets in $\mathcal{C}$ that solves the \emph{weight equations}: for any edges (respectively edges, vertices, vertices) $\alpha,\beta$ of $\Gamma$,
    $$\sum\limits_{C_{i}\in\mathcal{C}:\alpha\in C_{i}}n(C_{i})=\sum\limits_{C_{i}\in\mathcal{C}:\beta\in C_{i}}n(C_{i}).$$
\end{definition}
Note that though the above equations at first appear to be difficult to solve, we can always find solutions for a graph with an edge (respectively vertex) transitive automorphism group (see Section \ref{subsection: Examples of separated graphs}).

Next we extend these definitions to $CAT(0)$ polygonal complexes. This requires some care to ensure that the subcomplexes we build will actually be separating.
A \emph{polygonal complex} is a $2$-dimensional polyhedral complex and is \emph{regular} if either all polygonal faces are regular polygons.  For a polygonal complex $X$ and a vertex $v$ we define the \emph{link} of $v$, $Lk_{X}(v)$ (or simply $Lk(v)$ when $X$ is clear from context), as the graph whose vertices are the edges of $X$ incident at $v$, and two vertices $e_{1}$ and $e_{2}$ are connected by an edge $f$ in $Lk(v)$ if the edges $e_{1}$ and $e_{2}$ in $X$ are adjacent to a common face $f$. We can endow the link graph with the \emph{angular metric}: an edge $f=(e_{1},e_{2})$ in $Lk(v)$ has length $\alpha$, where $\alpha$ is the angle between $e_{1}$ and $e_{2}$ in the shared face $f$.

We first define the following graph, which appeared in \cite{Ollivier-Wise}. 
\begin{definition}[\emph{Antipodal graph}]
Let $Y$ be a regular non-positively curved polygonal complex. Subdivide edges in $Y$ and add vertices at the midpoints of edges: call these additional vertices \emph{secondary vertices}, and call the other vertices \emph{primary}. Every polygon in $Y$ now contains an even number of edges in its boundary. Construct a graph $\Delta_{Y}$ as follows. Let $V(\Delta_{Y})=V(Y)$ and join two vertices $v$ and $w$ by an edge, labelled $f$, if $v$ and $w$ exist and are antipodal in the boundary of a face $f$ in $Y$: add as many edges as such faces exist. This is the \emph{antipodal graph} for $Y$.
\end{definition}
\begin{remark}
We note that for a secondary vertex $s$ of $Y$, $s$ is a cage graph with edges of length $\pi$. Hence, if $Y$ does not contain any free faces, $Lk_{Y}(s)$ is weighted edge $\pi$-separated, with a single $\pi$-separated cutset $E(Lk_{Y}(s))$.
\end{remark}
Note that as the complex is regular, the edges of $\Delta_{Y}$ pass through the midpoints of edges in $Lk_{Y}(v)$ for vertices $v$.
There is a canonical immersion $\Delta_{Y} \looparrowright Y$; we map a vertex $v$ of $\Delta_{Y}$ to the corresponding vertex of $Y$, and we map an edge $e$ labelled by $f$ to the local geodesic between the endpoints of $e$ lying in the face $f$.

\begin{definition}
 Let $Y$ be a non-positively curved polygonal complex, and let $\Delta$ be one of $Y^{(1)}$ or $\Delta_{Y}$. Assign $\Delta$ an arbitrary orientation, and let $e$ be an oriented edge of $\Delta$. For each $\pi$-separated cutset $C$ in $Lk(i(e))$, choose a set of partitions of $\pi_{0}(Lk(i(e))-C)$, $\{P_{i}(C)\}_{i}$. For $v\in V(\Delta)$, we define 
$$\mathcal{C}_{v}=\{C\;:\;C\mbox{ is a }\pi\mbox{-separated cutset},\;C\subseteq Lk(v)\}.$$
We define
 $$\mathcal{C}(e):=\{C\;:\;C\mbox{ is a }\pi\mbox{-separated cutset},\;e\in C\},$$
and  $$\mathcal{C}=\bigcup_{e\in E^{\pm 1}(\Delta)}\mathcal{C}(e).$$
Similarly we can define
 $$\mathcal{CP}(e):=\bigcup\limits_{C\in\mathcal{C}(e)}\{(C,P_{i}(C))\}_{i},$$
 and $$\mathcal{CP}=\bigcup_{e\in E^{\pm 1}(\Delta)}\mathcal{CP}(e).$$

 \end{definition}
The following is extremely similar to the `splicing' of Manning \cite{Manning10}: we will use this for a similar purpose to that of \cite{Cashen-Macura2011}.
\begin{definition}[Equatable partitions]
 Let $Y$ be a non-positively curved polygonal complex, and let $\Delta$ be one of $Y^{(1)}$ or $\Delta_{Y}$. Let $v,w$ be two vertices of $\Delta$ connected by an oriented edge $e$, so that $v=i(e)$ and $w=t(e)$. Let $C_{v}$ be a $\pi$-separated cutset in $Lk(v)$ with choice of partition $P_{v}$ and $C_{w}$ be a $\pi$-separated cutset in $Lk(w)$ with choice of partition $P_{w}$.
 
 Let $v'$, $w'$ be points on $e$ in an $\epsilon$-neighbourhood of $v$, $w$ respectively, so that there are canonical mappings
 \begin{equation*}
     \begin{split}
         i_{v}&:St(v')\hookrightarrow Lk(v),\\
         i_{w}&:St(w')\hookrightarrow Lk(w),\\
         \phi&:St(v')\xrightarrow{\cong}St(w').
     \end{split}
 \end{equation*}
 Therefore we have induced mappings
 \begin{equation*}
     \begin{split}
         \overline{i}_{v}&:St(v')-v'\hookrightarrow Lk(v)-C_{v},\\
         \overline{i}_{w}&:St(w')-w'\hookrightarrow Lk(w)-C_{w},\\
         \overline{\phi}&:St(v')-v'\xrightarrow{\cong}St(w')-w'.
     \end{split}
 \end{equation*}
For $u=v,w$ let $\mathcal{P}_{u}$ be the set of partitions of $\pi_{0}(Lk(u)-C_{u})$, and let $\mathcal{P}_{u'}$ be the set of partitions of $\pi_{0}(St(u')-u')$. There are induced maps 
  \begin{equation*}
     \begin{split}
         \iota_{v}&:\mathcal{P}_{v}\rightarrow \mathcal{P}_{v'},\\
         \iota_{w}&:\mathcal{P}_{w}\rightarrow \mathcal{P}_{w'},\\
         \psi&:\mathcal{P}_{v'}\hookdoubleheadrightarrow \mathcal{P}_{w'}.
     \end{split}
 \end{equation*}

 We say that $(C_{v},P_{v})$ and $(C_{w},P_{w})$ are \emph{equatable along $e$}, written $$(C_{v},P_{v})\sim_{e}(C_{w},P_{w})$$
 if $$\psi(\iota_{v}(P_{v}))=\iota_{w}(P_{w}).$$

Note that this also defines an equivalence relation on $\mathcal{CP}(e)$: for $(C,P),(C',P')\in \mathcal{CP}(e)$, we write $$(C,P)\approx_{e} (C',P')$$ if 
$$\iota_{v}(P)=\iota_{v}(P').$$
This defines an equivalence relation on $\mathcal{CP}(e)$, and so defines an equivalence class $[C,P]_{e}$. We define $[C,P]_{e^{-1}}$ to be the equivalence class of cutset partitions in $\mathcal{CP}(e^{-1})$ equatable to $(C,P)$ along $e$: by definition this is independent of choice of $(C',P')\in [C,P]_{e}$.
\end{definition}
These constructions are designed so that we can `splice' the local cutsets along each edge. Though this definition is somewhat complicated, note the following remark.

\begin{remark}
Let $e$, $v,w$, $C_{v},C_{w}$ be as above. If both $C_{v},C_{w}$ are proper with canonical partitions $P_{v},P_{w}$, then $(C_{v},P_{v})\sim_{e}(C_{w},P_{w})$. This follows as the induced partitions of $St(v')-v'$ and $St(w')-w'$ are just the partitions induced by connectivity, and by properness every element of the induced partition of $St(v')-v'$ (respectively $St(w')-w'$) contains a unique vertex.

Similarly, if $C_{1},C_{2}\in\mathcal{C}(e)$ are proper, with canonical partitions $P_{1},P_{2}$, then $(C_{1},P_{1})\approx_{e}(C_{2},P_{2})$.
\end{remark}
\begin{definition}[\emph{Gluably $\sigma$-separated}]
Let $Y$ be a non positively curved polygonal complex. We call $Y$ \emph{gluably edge $\sigma$-separated} (respectively \emph{gluably  (weakly) vertex $\sigma$-separated}) if :
\begin{enumerate}[label=$\roman*)$]
\item $Y$ is regular (respectively $Y$ is allowed \textbf{not to be regular})
    \item the link of every vertex in $Y$ is edge (respectively (weakly) vertex) $\sigma$-separated,
    \item for every $\pi$-separated cutset $C$ in $Lk(v)$ there exists a series of partitions $\{P_{i}(C)\}$ of $\pi_{0}(Lk(v)-C)$ such that for any distinct pair of points $x,y\in Lk(v)$ separated by $C$, $x$ and $y$ are separated by some $P_{i}(C),$ 
    \item and there exists a strictly positive integer solution to the \emph{gluing equations}: letting $\Delta=\Delta_{Y}$ (respectively $\Delta=Y^{(1)}$) we can assign a positive integer $\mu (C,P)$ to every pair $$(C,P)\in\mathcal{CP}:=\bigcup\limits _{e\in E^{\pm}(\Delta )}\mathcal{CP}(e)$$ such that for every edge $e$ of $\Delta$ and every $(C,P)\in \mathcal{CP}(e)$:
\begin{equation*}
	    \sum\limits_{(C',P')\in [C,P]_{e}} \mu(C',P')=\sum\limits_{(C',P')\in [C,P]_{e^{-1}}} \mu(C',P').
	\end{equation*}
\end{enumerate}
\end{definition}

\begin{definition}[\emph{Gluably $\sigma$-separated}]
Let $Y$ be a non positively curved polygonal complex. We call $Y$:
\begin{enumerate}[label=$\roman*)$]
    \item \emph{gluably weakly $\sigma$-separated} if it is gluably weakly vertex $\sigma$-separated,
    \item and \emph{gluably $\sigma$-separated} if it is gluably edge or gluably vertex $\sigma$-separated.
\end{enumerate}
\end{definition}
\begin{remark}
Again, note that in the definition of a gluably (weakly) vertex $\sigma$-separated complex, \textbf{we do not require that the complex $Y$ is regular}. If the link of each vertex in the complex $Y$ is disjointly $\sigma$-separated, then we can solve the gluing equations by taking only the canonical partition $P(C)$ for each cutset $C$, and setting $\mu(C,P(C))=1$ for all cutsets $C$, so that $Y$ is gluably $\sigma$-separated.
\end{remark}

%----------------------------------------------------------------------------------------
%   REMOVING CUT EDGES
%----------------------------------------------------------------------------------------	
	\subsection{Removing cut edges}\label{subsection: Removing cut edges and producing two sided cut sets}
	We now show the existence of cut edges is not too much of an issue.

	\begin{lemma}\label{lem: removing cut edges}
	Let $G$ be a group acting properly discontinuously and cocompactly on a simply connected $CAT(0)$ polygonal complex $X$, such that the link of every vertex in $X$ is connected. There exists a simply connected $CAT(0)$ polygonal complex $X'$ such that $G$ acts properly discontinuously and cocompactly on $X'$ and the link of any vertex $v'$ in $X'$ is a subgraph of $Lk(v)$ for some vertex $v$ in $X$. Furthermore for any vertex $v$ of $X'$, either $Lk_{X'}(v)$ is connected and contains no cut edges, or $Lk_{X'}(v)$ is disconnected.
	\end{lemma}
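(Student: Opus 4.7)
The plan is to construct $X'$ by iteratively performing equivariant surgery on $X$ to remove cut edges from the links of vertices. By cocompactness there are only finitely many $G$-orbits of vertex-cut-edge pairs $(v,c)$, so only finitely many surgeries are needed; I would process one such orbit at a time and argue termination.

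For the key surgery, fix $v$ with $Lk(v)$ connected and a cut edge $c\in Lk(v)$ corresponding to the corner of a face $f$ at $v$, with corner edges $e_{1},e_{2}$. Since $c$ is a cut edge of $Lk(v)$, its endpoints $e_{1},e_{2}$ must lie in distinct components of $Lk(v)\setminus\{c\}$ (any path avoiding $c$ together with $c$ would give a cycle, contradicting that $c$ separates). Write the components as $L_{1},\hdots,L_{k}$ with $e_{1}\in L_{1}$, $e_{2}\in L_{2}$. I would then split $v$ into new vertices $v_{1},\hdots,v_{k}$, where the star of $v_{i}$ in the new complex consists of exactly those edges and face-corners at $v$ that correspond to $L_{i}$, and locally modify $f$ so that its boundary now passes through $v_{1}$ (as endpoint of $e_{1}$) and $v_{2}$ (as start of $e_{2}$) as formally distinct but topologically coincident points. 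Perform this equivariantly over $G\cdot(v,c)$.

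Next I would verify the conclusions for the new complex. By construction $Lk_{X'}(v_{i})=L_{i}$ is a subgraph of $Lk_{X}(v)$ equipped with the inherited angular metric, so no new short loops are created and the $CAT(1)$ condition on links is preserved; by Gromov's link condition $X'$ is $CAT(0)$. The action of $G$ remains proper and cocompact since the surgery is $G$-equivariant. For simple connectivity, the point of the local modification is that the identification $v_{1}=v_{2}$ recovers $X$ on the level of underlying topological spaces, so the natural map $X'\to X$ is a homotopy equivalence and $X'$ is simply connected. The process strictly reduces the total number of cut edges across $G$-orbits of vertex links (any newly created link is a proper subgraph of a previous one and hence has strictly fewer edges), so iteration terminates.

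The main obstacle is the face modification: after splitting $v$, the face $f$ must still be a well-defined polygon, yet its corner at $v$ now straddles two formally distinct cells $v_{1},v_{2}$. The cleanest way to handle this is to interpret the identification of $v_{1}$ with $v_{2}$ inside $f$ as a degenerate (length zero) boundary segment, which keeps $f$ a topological disk with the same homotopy type, preserves the angular data on both new links, and adds no new vertex or edge to either link. Verifying that this convention gives a genuine $CAT(0)$ polygonal complex (and not merely a topological space) and that the iteration does not introduce cut edges at the newly created vertices faster than they are removed is the technical heart of the argument.
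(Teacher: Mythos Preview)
Your overall strategy (equivariant vertex splitting, iterate over finitely many $G$-orbits, terminate) matches the paper's, but your treatment of the face $f$ after the split is the crux, and the ``degenerate length-zero boundary segment'' does not give a polygonal complex. If $v_{1}$ and $v_{2}$ are topologically coincident you have changed nothing and the link at that point is still $Lk(v)$; if they are genuinely distinct then $f$ acquires an extra edge of length zero, which is not permitted, and moreover the corner $c$ has no well-defined home in either $Lk(v_{1})$ or $Lk(v_{2})$ since its endpoints $e_{1},e_{2}$ lie in different components $L_{1},L_{2}$.

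The paper avoids this by a different split. Rather than partitioning $Lk(v)$ along the cut \emph{edge} $c$, it duplicates the edge $e_{1}$ of $X$ (a \emph{vertex} of $Lk(v)$): the two copies $e_{1}^{1},e_{1}^{2}$ go to the new vertices $v_{1},v_{2}$, and the star of $v$ is partitioned according to the components of $Lk(v)\setminus\{e_{1}\}$, with the face $f$ attached on the $v_{1}$ side. The point is that now $e_{1}^{1}$ lies in only the single face $f$, so $f$ is a \emph{free face} and can be collapsed by a deformation retraction. This collapse removes $f$ (and $e_{1}^{1}$) entirely, yielding an honest polygonal complex; it preserves homotopy type, hence simple connectedness; and it deletes the edge $f$ from the link of every other vertex on $\partial f$, so all affected links become proper subgraphs. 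Your termination and $CAT(0)$ arguments then go through with this modification.
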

	\begin{proof}
	First note that we can assume that $X$ contains no vertices of degree $1$ in its links. Let $Y=G\backslash X$, and let $v_{0},\hdots ,v_{m}$ be the vertices of $Y$. 
	
	Let $v$ be a vertex in $X$, and suppose there exists a cut edge $f$ in $Lk(v)$. Let $e_{1}$ and $e_{2}$ be the endpoints of $f$. Suppose that, in $X$, the endpoints of $e_{1}$ are $v$ and $w$. Construct a new complex $X'$ as follows: Let $v_{1}$ and $v_{2}$ be two copies of $v$ and connect these vertices to $w$ with the edges $e_{1}^{1}$ and $e_{2}^{2}$ respectively. Since $f$ is a cut edge in $Lk(v)$ there is a canonical way to attach edges and faces to $v_{1}$ and $v_{2}$ that agrees with the connected components of $Lk(v) - e_{1}$.
	
	Now, we assume that $f$ is attached to $v_{1}$. Then the face $f$ is a free face, which we can push in to remove the vertex of degree $1$, $e_{1}^{1}$ in $Lk(v_{1})$, so that $Lk(v_{1})$ and $Lk(v_{2})$ are connected subgraphs of $Lk(v)-f$, and the links of any other vertices $w$ incident to the face $f$ are transformed to a proper subgraph of $Lk(w)$ with the edge $f$ removed.
	
	We can repeat this process finitely many times, applied to the set of vertices $Gv_{i}$ each time, to find the $CAT(0)$ polygonal complex $X'$ desired. 
	\end{proof}

%----------------------------------------------------------------------------------------
%   Examples of separated graphs
%----------------------------------------------------------------------------------------	
	\subsection{Examples of separated graphs}\label{subsection: Examples of separated graphs}
	
Our definitions of weighted $\sigma$-separated graphs required assigning weights to cutsets such that certain equations hold. In this subsection we prove that as long as the automorphism group of a graph is transitive on vertices (or edges, depending on whether cutsets are formed of vertices or edges), then these equations can always be solved. Note that $Aut(\Gamma)$ is the group of automorphisms of $\Gamma$ as a metric graph.
\begin{lemma}\label{lem: vertex transitive automorphism group can solve equations}
  Let $\sigma>0$ and let $\Gamma$ be (weakly) vertex $\sigma$-separated. If $Aut(\Gamma)$ is vertex transitive then $\Gamma$ is weighted (weakly) vertex $\sigma$-separated.  
\end{lemma}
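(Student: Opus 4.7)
The plan is to use the vertex-transitive action of $\mathrm{Aut}(\Gamma)$ to produce an $\mathrm{Aut}(\Gamma)$-invariant collection of cutsets, and then to use the multiplicities of this orbit as the required weights. The key point is that automorphisms are isometries of the metric graph, hence they send $\sigma$-separated vertex cutsets to $\sigma$-separated vertex cutsets (also preserving the cardinality condition $|C_i|\geq 2$ and, in the vertex case, the conditions on separating adjacent vertices and pairs at distance $\geq\sigma$).

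Concretely, let $A=\mathrm{Aut}(\Gamma)$, let $\mathcal{C}=\{C_{1},\dots,C_{m}\}$ be the given collection of $\sigma$-separated vertex cutsets witnessing that $\Gamma$ is (weakly) vertex $\sigma$-separated, and consider the enlarged collection
$$\widetilde{\mathcal{C}}\;=\;\{\,gC_{i}\;:\;g\in A,\;1\leq i\leq m\,\},$$
with each element equipped with the induced partition obtained by pushing forward the chosen partition of $\pi_{0}(\Gamma-C_{i})$ through $g$. Since $\mathcal{C}\subseteq\widetilde{\mathcal{C}}$ and since $A$ preserves everything in sight, the defining conditions (i)--(iv) of (weakly) vertex $\sigma$-separation still hold for $\widetilde{\mathcal{C}}$; hence we are free to work with it. For each $C\in\widetilde{\mathcal{C}}$ I would define
$$n(C)\;:=\;\bigl|\{(g,i)\in A\times\{1,\dots,m\}\;:\;gC_{i}=C\}\bigr|\;\in\;\mathbb{Z}_{>0}.$$

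The main (and essentially only) computation is to verify that $\sum_{C\ni v}n(C)$ is independent of $v$. Fixing $v\in V(\Gamma)$, we have
$$\sum_{C\in\widetilde{\mathcal{C}}:\,v\in C}n(C)\;=\;\bigl|\{(g,i)\;:\;v\in gC_{i}\}\bigr|\;=\;\sum_{i=1}^{m}\sum_{w\in C_{i}}\bigl|\{g\in A\;:\;gw=v\}\bigr|.$$
Vertex transitivity of $A$ together with the orbit--stabilizer theorem gives $|\{g\in A:gw=v\}|=|\mathrm{Stab}_{A}(v)|=|A|/|V(\Gamma)|$, a quantity that does not depend on $w$ (this is precisely the place where vertex transitivity is used). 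Therefore
$$\sum_{C\in\widetilde{\mathcal{C}}:\,v\in C}n(C)\;=\;\frac{|A|}{|V(\Gamma)|}\sum_{i=1}^{m}|C_{i}|,$$
which is manifestly independent of $v$, so the weight equations hold.

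I do not expect any serious obstacles here: the only place requiring care is to make sure that enlarging $\mathcal{C}$ to $\widetilde{\mathcal{C}}$ does not break any of the hypotheses of (weakly) vertex $\sigma$-separation, but this is immediate because all the relevant conditions are existential ($\cup_{i}C_{i}=V(\Gamma)$, existence of cutsets separating given pairs, $|C|\geq 2$), and each such condition satisfied by $\mathcal{C}$ is automatically inherited by the larger collection $\widetilde{\mathcal{C}}$. The mild bookkeeping in the weakly-vertex case is to note that the partitions $\{P_{i}(C)\}$ chosen on the original cutsets can be transported by the automorphisms to furnish partitions for the new cutsets, which does not affect the weight equations themselves.
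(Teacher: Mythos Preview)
Your proposal is correct and follows essentially the same approach as the paper: both enlarge the collection of cutsets to the full $\mathrm{Aut}(\Gamma)$-orbit counted with multiplicity, and both use vertex transitivity to show that $\sum_{C\ni v}n(C)$ is independent of $v$. Your verification is slightly more explicit, computing the exact value $\frac{|A|}{|V(\Gamma)|}\sum_i |C_i|$ via orbit--stabilizer, whereas the paper establishes equality directly by exhibiting a bijection $\gamma\mapsto h\gamma$ between $\{\gamma:v\in\gamma C\}$ and $\{\gamma':w\in\gamma' C\}$; but this is a cosmetic difference, not a substantive one.
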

\begin{proof}
Assume that $\Gamma$ is vertex $\sigma$-separated, with $\sigma$-separated vertex cutsets $\mathcal{C}=\{C_{1},\hdots, C_{n}\}$. The proof is similar for weakly separated graphs. Let $H=Aut(\Gamma)$. For each $C\in\mathcal{C}$, let $$H(C):=\{\gamma C\;:\;\gamma\in H\},$$ counted {\bf{with}} multiplicity, i.e. if $\gamma_{1}C=\gamma_{2}C$ and $\gamma_{1}\neq_{H}\gamma_{2}$, then both $\gamma_{1}C,\gamma_{2}C$ appear in $H(C)$. Note that for every $C'\in H(C)$, $C'$ is a $\sigma$-separated vertex cutset.

Fix some vertex $v\in C$, and let $w\in V(\Gamma)$ be any vertex. Since $H$ acts vertex transitively, there exists $h\in H$ such that $h v=w$. Therefore 
$$\{\gamma\in H \;:\;v\in \gamma C\}=\{\gamma\in H \;:\;w\in h\gamma C\}=\{h^{-1}\gamma'\in H \;:\;w\in \gamma' C\},$$
and therefore $$\vert\{\gamma\in H \;:\;v\in \gamma C\}\vert=\vert\{\gamma\in H \;:\;w\in \gamma C\}\vert.$$

Let $$\tilde{\mathcal{C'}}:=\bigsqcup\limits_{C\in\mathcal{C'}}H(C),$$
again with multiplicity. By the above, it follows that for any two vertices $v,w\in V(\Gamma)$,
$$\vert\{C\in \tilde{\mathcal{C}'} \;:\;v\in  C\}\vert=\vert\{C\in \tilde{\mathcal{C}'} \;:\;w\in  C\}\vert.$$
Let $\mathcal{C}'$ be the underlying set of $\tilde{\mathcal{C}'}$, and for $C\in \mathcal{C}',$ let $$n(C)=\vert\{C'\in\tilde{\mathcal{C}'}\;:\;C=C'\} \vert,$$
i.e. $n(C)$ is the multiplicity of $C$ in $\tilde{\mathcal{C}'}.$ It is easily seen that the above weights solve the gluing equations.

As $\mathcal{C}\subseteq\mathcal{C}'$, it follows that $\Gamma$ is vertex separated with respect to these cutsets: by the above argument it follows that $\Gamma$ is weighted vertex $\sigma$-separated with cutsets $\mathcal{C}'$.
\end{proof}
Similarly, we can prove the following.
\begin{lemma}\label{lem: edge transitive automorphism group can solve equations}
     Let $\sigma>0$ and let $\Gamma$ be (strongly) edge $\sigma$-separated. If $Aut(\Gamma)$ is edge transitive then $\Gamma$ is weighted (strongly) edge $\sigma$-separated.  
\end{lemma}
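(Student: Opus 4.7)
The plan is to mirror, essentially verbatim, the proof of the previous lemma (the vertex-transitive version), replacing the role of a vertex orbit by an edge orbit. Let $H=\mathrm{Aut}(\Gamma)$ and let $\mathcal{C}=\{C_{1},\dots ,C_{n}\}$ be the collection of proper $\sigma$-separated edge cutsets witnessing that $\Gamma$ is edge $\sigma$-separated. For each $C\in\mathcal{C}$ define the $H$-orbit with multiplicity
\[
H(C):=\{\gamma C\;:\;\gamma\in H\},
\]
where $\gamma_{1}C$ and $\gamma_{2}C$ are counted as distinct whenever $\gamma_{1}\neq\gamma_{2}$ in $H$, even if they determine the same set of edges. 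Because every $\gamma\in H$ is an isometric graph automorphism, each $\gamma C$ is again a proper $\sigma$-separated edge cutset of size at least $2$, and its canonical (or any chosen) partition is carried over by $\gamma$.

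Next I would carry out the orbit-counting step. Fix some edge $f\in C$. For an arbitrary edge $f'\in E(\Gamma)$, edge-transitivity of $H$ supplies $h\in H$ with $hf=f'$. Then
\[
\{\gamma\in H\;:\;f\in\gamma C\}=\{h^{-1}\gamma'\in H\;:\;f'\in\gamma' C\},
\]
so these two sets have the same cardinality. Hence letting
\[
\widetilde{\mathcal{C}'}:=\bigsqcup_{C\in\mathcal{C}}H(C),
\]
counted with multiplicity, the number of members of $\widetilde{\mathcal{C}'}$ containing $f$ equals that containing $f'$. Let $\mathcal{C}'$ be the underlying set of $\widetilde{\mathcal{C}'}$ and set
\[
n(C'):=\bigl|\{D\in\widetilde{\mathcal{C}'}\;:\;D=C'\}\bigr|
\]
for each $C'\in\mathcal{C}'$. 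The previous equality of cardinalities is exactly the weight equation for the pair $\{f,f'\}$, so the $n(C')$ solve the weight equations.

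Finally, since $\mathcal{C}\subseteq\mathcal{C}'$, the collection $\mathcal{C}'$ still covers $E(\Gamma)$ and each cutset in $\mathcal{C}'$ remains proper, $\sigma$-separated, and of size at least $2$; in the strongly edge $\sigma$-separated case the separation property for pairs of points at distance $\geq\sigma$ also persists, because enlarging a collection of cutsets cannot destroy it. Therefore $\Gamma$ is weighted (strongly) edge $\sigma$-separated with cutsets $\mathcal{C}'$ and weights $n(\,\cdot\,)$. The only part of the argument that requires any thought is bookkeeping the multiplicities correctly so that the orbit-counting identity translates directly into the weight equation; once that is set up, the verification is a one-line application of edge transitivity, which is why this lemma is strictly parallel to the vertex-transitive case.
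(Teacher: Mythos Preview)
Your proposal is correct and is exactly what the paper intends: it states only ``Similarly, we can prove the following'' after the vertex-transitive lemma, and your argument is precisely that parallel, replacing vertices by edges throughout and using edge-transitivity of $\mathrm{Aut}(\Gamma)$ in the orbit-counting step.
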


%----------------------------------------------------------------------------------------
%   Examples of separated graphs
%----------------------------------------------------------------------------------------	
	\subsection{Examples of solutions of the gluing equations}\label{subsection: Examples of solutions of the gluing equations}

Recall that we call an edge cutset $C$ \emph{proper} if the endpoints of every edge $e$ in $C$ lie in separate components of $\Gamma-C$, and a vertex cutset $C$ \emph{proper} if for every $v\in C$, the vertices adjacent to $v$ each lie in separate components of $\Gamma-C$.
\begin{lemma}\label{lem: solving gluing equations for minimal edge cutsets}
  Let $Y$ be a regular non-positively curved complex and suppose the link of each vertex is weighted edge $\pi$-separated. There exists a system of strictly positive weights that solve the gluing equations for $Y$.
\end{lemma}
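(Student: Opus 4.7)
The plan is to use the local weights at each link given by weighted edge $\pi$-separation, lift them to cutset-partition pairs in $Y$ using canonical partitions, and rescale by a global factor so that the gluing equations at edges of $\Delta_Y$ become automatic.

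First, for each vertex $v$ of $Y$ let $\mathcal{C}^{v} = \{C_1^{v}, C_2^{v}, \ldots\}$ denote the collection of proper $\pi$-separated edge cutsets in $Lk(v)$ coming from the weighted edge $\pi$-separated structure, with positive integer weights $n^{v}(C)$ satisfying the weight equations
\[
\sum_{C \in \mathcal{C}^{v},\ \alpha \in C} n^{v}(C) \;=\; N_v
\]
for a constant $N_v$ independent of the edge $\alpha \in E(Lk(v))$. I attach to each such $C$ the canonical partition $P_{\mathrm{can}}(C)$ of $\pi_0(Lk(v) - C)$ induced by connectivity.

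Because each $C \in \mathcal{C}^{v}$ is proper, the induced partition $\iota_v(P_{\mathrm{can}}(C))$ of $\pi_0(St(v') - v')$ is the finest possible partition (each incident vertex in its own class). By the paper's remark preceding the lemma, this implies that for every edge $e$ of $\Delta_Y$ with $v = i(e)$ and $w = t(e)$, all pairs $(C, P_{\mathrm{can}}(C)) \in \mathcal{CP}(e)$ coming from $\mathcal{C}^{v}$ are mutually $\approx_e$-equivalent, and each is $\sim_e$-equatable to every such pair in $\mathcal{CP}(e^{-1})$ coming from $\mathcal{C}^{w}$. Hence the equivalence classes $[C, P_{\mathrm{can}}]_e$ and $[C, P_{\mathrm{can}}]_{e^{-1}}$ exhaust the relevant pairs, and the gluing equation at $e$ collapses to
\[
\sum_{C \in \mathcal{C}^{v},\ \alpha_e \in C} \mu(C, P_{\mathrm{can}}(C)) \;=\; \sum_{C \in \mathcal{C}^{w},\ \beta_e \in C} \mu(C, P_{\mathrm{can}}(C)),
\]
where $\alpha_e \in E(Lk(v))$ and $\beta_e \in E(Lk(w))$ are the link-edges through which $e$ passes.

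Since $Y$ has only finitely many vertices (up to $\mathrm{Aut}(Y)$-action, as in the intended applications where $Y = G\backslash X$ is compact), the constants $N_v$ take only finitely many values; set $M := \mathrm{lcm}\{N_v\}$ and define $\mu(C, P_{\mathrm{can}}(C)) := n^{v}(C) \cdot M / N_v$ for every $C \in \mathcal{C}^{v}$. This is a strictly positive integer, and by the weight equation at each link both sides of the reduced gluing equation at every edge of $\Delta_Y$ equal $M$. The main obstacle is the bookkeeping around the two equivalence relations $\approx_e$ and $\sim_e$; once the remark on proper cutsets with canonical partitions is applied, the global gluing system decouples into a single local identity at each edge of $\Delta_Y$, and the rescaling by a finite lcm finishes the proof.
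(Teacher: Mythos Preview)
Your proof is correct and follows essentially the same approach as the paper: both use the remark that proper edge cutsets with canonical partitions are automatically equatable, so the gluing equation at each edge of $\Delta_Y$ reduces to matching the local weight-sums $N_v$ and $N_w$, and then rescale the link weights by a global constant to make these agree. The only cosmetic difference is that you take $M = \mathrm{lcm}\{N_v\}$ while the paper uses $M = \prod_v N_v$; both require the implicit finiteness of $V(Y)$, which you correctly flag.
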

\begin{proof}
Since edge cutsets are proper, any two cutsets are equatable along a shared edge. Therefore we may associate to each cutset $C$ exactly one partition $P(C)$, namely that of connectivity in $\Gamma-C$. In particular for any oriented edge $e\in E^{\pm}(\Delta_{Y})$ and any $(C,P(C))\in \mathcal{CP}(e),$ $[C,P(C)]_{e}=\mathcal{CP}(e).$

First, note that for an oriented edge $e$ of $\Delta_{Y}$, and $v=i(e)$, $\mathcal{C}(e)=\mathcal{C}(e)\cap \mathcal{C}_{v}$. Since the link of each vertex in $Y$ is weighted edge $\pi$-separated, for each vertex $v\in Y$ there exists a positive integer $N_{v}>0$ and a system of strictly positive weights $n_{v}(C)$ for $C\in \mathcal{C}_{v}$ such that for any edge $e$ in $Lk_{Y}(v)$,

$$\sum\limits_{C\in \mathcal{C}(e)}n_{v}(C)=\sum\limits_{C\in \mathcal{C}(e)\cap \mathcal{C}_{v}}n_{v}(C)=N_{v}.$$
Let $M=\prod_{v\in V(Y)}N_{v},$ and for a cutset $C\in\mathcal{C}_{v},$ define $m(C)=Mn_{v}(C)\slash N_{v}.$ It follows that for an edge $e$ in $Lk_{Y}(v)$, 
$$\sum\limits_{C\in \mathcal{C}(e)}m(C)=\frac{M}{N_{v}}\sum\limits_{C\in \mathcal{C}(e)}n_{v}(C)=\frac{M}{N_{v}}N_{v}=M.$$
Finally, taking $\mu(C,P(C))=m(C)$, these weights immediately solve the gluing equations.
\end{proof}

Similarly, we can prove the following.

\begin{lemma}\label{lem: solving gluing equations for proper vertex cutsets}
  Let $Y$ be a non-positively curved complex, such that the link of each vertex is weighted vertex $\pi$-separated, and every cutset is proper. There exists a system of strictly positive weights that solve the gluing equations for $Y$.
\end{lemma}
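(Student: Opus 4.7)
The plan is to mimic the proof of Lemma \ref{lem: solving gluing equations for minimal edge cutsets}, with the role of the antipodal graph $\Delta_{Y}$ replaced by the $1$-skeleton $\Delta = Y^{(1)}$ and with edge cutsets replaced by vertex cutsets. The only real input specific to this setting is the properness assumption, which is used exactly to collapse the equatability equivalence classes.

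First I would invoke the remark following the definition of equatable partitions: if $C_{1},C_{2}\in\mathcal{C}(e)$ are both proper, then with their canonical partitions $P_{1},P_{2}$ they satisfy $(C_{1},P_{1})\approx_{e}(C_{2},P_{2})$. Since by hypothesis every cutset is proper, I may assign to each cutset $C$ its unique canonical partition $P(C)$, and then for every oriented edge $e$ of $\Delta$ and every $(C,P(C))\in \mathcal{CP}(e)$ we have
$$[C,P(C)]_{e}=\mathcal{CP}(e).$$
In particular the equivalence class collapses to the full set of cutsets containing the vertex of $Lk(i(e))$ corresponding to $e$, so the gluing equation along $e$ reduces to
$$\sum\limits_{C\in \mathcal{C}(e)}\mu(C,P(C))=\sum\limits_{C\in \mathcal{C}(e^{-1})}\mu(C,P(C)).$$

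Next, for each vertex $v$ of $Y$, I use the assumption that $Lk(v)$ is weighted vertex $\pi$-separated: this yields a positive integer $N_{v}$ and strictly positive weights $n_{v}(C)$ on $C\in \mathcal{C}_{v}$ such that for every vertex $\alpha$ of $Lk(v)$,
$$\sum\limits_{C\in\mathcal{C}_{v}:\alpha\in C} n_{v}(C) = N_{v}.$$
Observing that $\mathcal{C}(e)=\mathcal{C}(e)\cap\mathcal{C}_{i(e)}$, the left-hand side of the gluing equation above is exactly $N_{i(e)}$ and the right-hand side is $N_{t(e)}$. The only obstacle to concluding is that these per-vertex sums may differ across vertices, which is cured by a uniform rescaling exactly as in Lemma \ref{lem: solving gluing equations for minimal edge cutsets}: setting $M=\prod_{v\in V(Y)}N_{v}$ and $m(C)=Mn_{v}(C)/N_{v}$ for $C\in\mathcal{C}_{v}$, each vertex $\alpha$ of $Lk(v)$ satisfies $\sum_{C\in \mathcal{C}_{v}:\alpha\in C}m(C)=M$, independent of $v$.

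Finally I define $\mu(C,P(C)):=m(C)$; by construction these are strictly positive integers and both sides of the gluing equation along any edge $e$ of $\Delta$ equal $M$, so the equations are satisfied. There is essentially no hard step here: once the properness assumption is exploited (via the remark) to eliminate any ambiguity about partitions and equatability classes, the argument is a direct transcription of the edge-cutset case, with the minor notational change that sums in links now range over vertices rather than edges of the link. The only thing worth double-checking is that the rescaling is well-defined — i.e.\ that each cutset $C$ appears in $\mathcal{C}_{v}$ for a unique $v$ — which holds because a cutset in $Lk(v)$ is by definition attached to the single vertex $v$.
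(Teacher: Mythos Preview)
Your proposal is correct and is exactly the argument the paper has in mind: the paper does not write out a separate proof but simply says ``Similarly, we can prove the following'' after Lemma~\ref{lem: solving gluing equations for minimal edge cutsets}, and your write-up is precisely that similar argument, with the properness remark used to collapse each $[C,P(C)]_{e}$ to $\mathcal{CP}(e)$ and the same product-rescaling trick to equalise the per-vertex constants $N_{v}$.
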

%----------------------------------------------------------------------------------------
%   CONSTRUCTING HYPERGRAPHS
%----------------------------------------------------------------------------------------
\subsection{Hypergraphs in \texorpdfstring{$\mathbf{\pi}$}{pi}-separated polygonal complexes}\label{subsection: Constructing hypergraphs in polygonal complexes}
We now begin to construct our separating subcomplexes.
Suppose $X$ is a simply connected $CAT(0)$ polygonal complex, and $G$ acts properly discontinuously and cocompactly on $X$, so that $G\backslash X$ is (weakly) gluably $\pi$-separated. If $G\backslash X$ is gluably edge $\pi$-separated, let $\Delta=\Delta_{G\backslash X}$, and if it is (weakly) gluably vertex $\pi$-separated, let $\Delta=(G\backslash X)^{(1)}$. Assign an arbitrary orientation to $\Delta$. 
Recall that for an oriented edge $e$ of $\Delta$, we let $\mathcal{C}(e)=\{C\in\mathcal{C}\;:\;e\in C\}$ (note that for any oriented edge $e$, $\mathcal{C}(e)$ is non-empty, as $G\backslash X$ is gluably edge $\pi$-separated). 
 For every vertex $v$ and $\pi$-separated cutset $C$ in $Lk(v)$ let $\{P_{i}(C)\}$ be the required set of partitions of $\pi_{0}(Lk(v)-C)$, and let
 $$\mathcal{CP}(e)=\bigcup\limits_{C\in\mathcal{C}(e)}\{(C,P_{i}(C)\}_{i}.$$
Let $$\mathcal{CP}=\bigcup\limits_{e\in E^{\pm}(\Delta)}\mathcal{CP}(e).$$
By assumption, we can assign positive integer weights $\mu(C,P)$ to each cutset $(C,P)\in\mathcal{CP}$ so that for every edge $e$ of $\Delta$:

 \begin{equation*}
	    \sum\limits_{(C',P')\in [C,P]_{e}} \mu(C',P')=\sum\limits_{(C',P')\in [C,P]_{e^{-1}}} \mu(C',P').
	\end{equation*}
We now construct a second graph $\Sigma$ as follows. Let 
$$V(\Sigma)=\bigsqcup\limits_{(C,P)\in\mathcal{CP}}\{u_{(C,P)}^{1},\hdots ,u_{(C,P)}^{\mu(C,P)}\}.$$
The gluing equations imply that for each positively oriented edge $e$ of $\Delta$ and each equivalence class $[C,P]_{e}\subseteq \mathcal{CP}(e)$ there exists a bijection $$\phi_{e}:\hspace*{- 10pt}\bigsqcup\limits_{(C',P')\in [C,P]_{e}}\hspace*{- 10pt}\{u_{(C',P')}^{1},\hdots ,u_{(C',P')}^{\mu(C',P')}\}\rightarrow\hspace*{- 10pt} \bigsqcup\limits_{(C',P')\in [C,P]_{e^{-1}}}\hspace*{- 10pt}\{u_{(C',P')}^{1},\hdots ,u_{(C',P')}^{\mu(C',P')}\}.$$

For each positively oriented edge $e$ and each equivalence class $[C,P]_{e}$ of $\mathcal{CP}(e)$ choose such a bijection, $\phi_{e}$, and add the oriented edges $$\{(u_{(C',P')}^{i},\phi_{e}(u_{(C',P')}^{i}))\;:\;(C',P')\in[C,P]_{e}, 1\leq i\leq \mu(C',P')\}.$$ 

Note that for each $(C,P)\in\mathcal{CP}$, $Lk_{\Sigma}(u_{(C,P)}^{i})$ is isomorphic to $C$ as labelled oriented graphs. Furthermore, each edge in $\Sigma$ labelled by $e$ connects two vertices of the form $u_{(C,P)}^{i}$ $u_{(C',P')}^{j}$ with $(C,P)\sim_{e}(C',P')$, i.e. every edge connects vertices with equatable partitions along that edge. There is an immersion $\Sigma\looparrowright\Delta$ that sends $u_{(C,P)}^{i}$ to the vertex $v_{C}$ such that $C\subseteq Lk_{\Delta}(v_{C})$ and maps an edge labelled by $e$ to the edge $e$ in $\Delta$.

Let $\Sigma_{1},\hdots ,\Sigma_{m}$ be the connected components of $\Sigma$, and let $\underline{\Lambda}^{1},\hdots , \underline{\Lambda}^{m}$ be the images of these graphs in $G\backslash X$ under the immersion $$\Sigma_{i}\looparrowright\Delta\looparrowright G\backslash X.$$ Note that each $\underline{\Lambda}^{i}$ is locally geodesic as the cut sets are $\pi$-separated in $G\backslash X$. 

\begin{definition}
If $G\backslash X$ is gluably edge $\pi$-separated, a lift of $\underline{\Lambda}^{i}$ from to the $CAT(0)$ complex $X$ is called a \emph{edge hypergraph in $X$}, and otherwise it is a \emph{vertex hypergraph in $X$}.
\end{definition}

Note that hypergraphs come with two pieces of information at each vertex $v$ in $X$: the cutset $C$ and partition $P$. We say $\Lambda$ \emph{passes through} the above objects.
 \begin{remark}
Note that in the above construction for every vertex $v$, every $\pi$-separated edge cutset $C$ in $Lk(v)$ and chosen partition $P$ of $\pi_{0}(Lk(v)-C)$, and every lift $\tilde{v}$ of $v$, there exists a hypergraph passing through $(C,P)$ in $Lk_{X}(\tilde{v})$.
\end{remark}
\begin{figure}[H]
    \centering
    \includegraphics{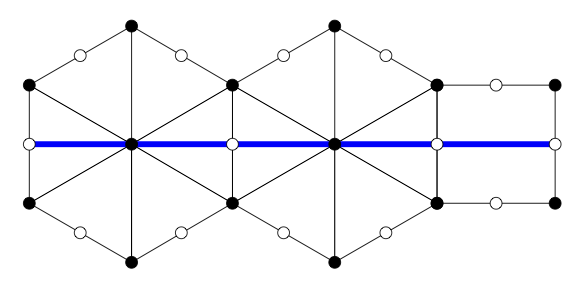}
    \caption{Example of subsection of an edge hypergraph.}
\end{figure}
Importantly, our construction ensures that for any hypergraph $\Lambda$, $Stab(\Lambda)$ acts properly discontinuously and cocompactly on $\Lambda$.

%----------------------------------------------------------------------------------------
%   Hypergraphs are separating
%----------------------------------------------------------------------------------------
\subsection{Hypergraphs are separating}\label{subsection: Hypergraphs are separating}
We now analyse the structure of the hypergraphs, and show they are in fact separating.
\begin{lemma}\label{lem: edge hypergraphs are leafless convex}
	Let $G\backslash X$ be a simply connected (weakly) gluably $\pi$-separated $CAT(0)$ polygonal complex, and let $\Lambda$ be a hypergraph in $X$. Then $\Lambda$ is a leafless convex tree.
\end{lemma}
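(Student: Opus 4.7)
The plan is to establish four facts about a hypergraph $\Lambda \subset X$: it is locally geodesic in $X$, it is leafless, it is a tree, and it is convex. Once local geodesicity is in hand, the remaining three properties all flow from the standard $CAT(0)$ fact that a locally geodesic path in a $CAT(0)$ space is the unique geodesic between its endpoints.

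First I would address local geodesicity. By the construction of $\Sigma$ in Section~\ref{subsection: Constructing hypergraphs in polygonal complexes}, each vertex $u^i_{(C,P)}$ of $\Sigma$ has link isomorphic to the cutset $C$. Lifting the immersion $\Sigma_i \looparrowright G\backslash X$ to $X$, at each vertex $\tilde v$ of $\Lambda$ the germs of edges of $\Lambda$ are in bijection with the elements of a $\pi$-separated cutset $C \subseteq Lk_X(\tilde v)$: for edge hypergraphs each germ is the direction at $\tilde v$ of the geodesic crossing a face of $X$ and so corresponds to the midpoint of an edge in $C$, while for vertex hypergraphs each germ is the direction of an edge of $X$ at $\tilde v$ and so corresponds to a vertex in $C$. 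Since $C$ is $\pi$-separated, in both cases any two such germs subtend angular distance at least $\pi$ in $Lk_X(\tilde v)$. By the local characterization of geodesics in a $CAT(0)$ polygonal complex (any piecewise-geodesic path whose incoming and outgoing directions lie at angular distance $\geq \pi$ at every break point is locally geodesic), any non-backtracking path in $\Lambda$ is locally geodesic in $X$.

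Once local geodesicity is established the rest is short. Leaflessness follows immediately from the axiom $\vert C_i\vert\geq 2$ built into every definition of $\sigma$-separation, so at each vertex $\tilde v$ of $\Lambda$ there are at least two incident edges. For the tree property, $\Lambda$ is connected by construction as a connected lift of $\underline{\Lambda}^i$. If $\Lambda$ contained an embedded cycle, we could pick two distinct points on that cycle to obtain two distinct non-backtracking arcs in $\Lambda$ joining them; by the previous paragraph both arcs are locally geodesic, hence both are geodesics in $X$ between the same endpoints, contradicting the uniqueness of geodesics in the $CAT(0)$ space $X$. So $\Lambda$ is a connected forest, i.e.\ a tree. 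For convexity, given any two points $p,q \in \Lambda$ the unique embedded path between them in the tree $\Lambda$ does not backtrack at vertices, hence is locally geodesic in $X$, hence is \emph{the} geodesic of $X$ from $p$ to $q$; this shows $\Lambda$ is convex in $X$.

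The main technical step is the identification in the first paragraph: at each vertex $\tilde v$ of $\Lambda$, correctly translating the combinatorial $\pi$-separation in the link $Lk_X(\tilde v)$ into an angular $\pi$-separation between the germs of edges of $\Lambda$. For edge hypergraphs one must handle both primary and secondary vertices of the subdivision (at a secondary vertex $\tilde v$ the link is a cage graph with every edge of length $\pi$, so the required separation is automatic from the remark preceding the definition of $\Delta_Y$), and must verify that a geodesic crossing a face and passing through $\tilde v$ really does exit $\tilde v$ through the midpoint of the corresponding edge of $Lk_X(\tilde v)$. After this bookkeeping the rest of the proof is a direct application of $CAT(0)$ geometry.
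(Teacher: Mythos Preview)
Your proof is correct and follows essentially the same route as the paper's: $\pi$-separation of the cutsets gives local geodesicity of $\Lambda$, and then the $CAT(0)$ fact that local geodesics are unique geodesics immediately yields both the tree property and convexity, while $\vert C\vert\geq 2$ gives leaflessness. Your write-up is in fact more explicit than the paper's (which compresses the tree and convexity conclusions into a single sentence), and you correctly flag the extra bookkeeping needed at secondary vertices for edge hypergraphs, which the paper handles by the same observation that the absence of degree-$1$ vertices in primary links forces every secondary cutset to have at least two elements.
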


\begin{proof}
We prove this for edge hypergraphs: the argument is similar for vertex hypergraphs.
	For each $i$, as the cutsets are $\pi$-separated, the image of $\Sigma_{i}\looparrowright G\backslash X$ is locally geodesic: therefore $\Lambda$ is locally geodesic in $X$. As $X$ is $CAT(0)$, local geodesics are geodesic, and geodesics are unique, so that $\Lambda$ is a convex tree.
	Since $\vert C\vert\geq 2$ for any $v\in V(\Delta_{G\backslash X})$ and $C\in\mathcal{C}_{v}$, $\Lambda$ contains no primary vertices of degree $1$. Similarly, as there are no vertices of degree $1$ in the link of a primary vertex in $X$, there are no cut edges in the link of a secondary vertex and so every cut set contains at least two edges. It follows that edge hypergraphs are leafless.
\end{proof}
	\begin{definition}
		 Let $\Lambda_{i}$ be a hypergraph in $X$ and $x,y\in X$ be distinct points in $X$. We say $\Lambda_{i}$ \emph{separates} $x$ and $y$ if $x$ and $y$ lie in distinct components of $X-\Lambda_{i}$.
		 We write $\#_{\Lambda}(x,y)$ for the number of edge (or vertex) hypergraphs separating $x$ and $y$.
	\end{definition}
	
	We now consider separating points: we prove the following lemma.
	We call a path $\gamma$ \emph{transverse} to $\Lambda$ if $\vert \gamma\cap \Lambda\vert =1$
	\begin{lemma}\label{lem: separation condition}
	    Let $\Lambda$ be a hypergraph in $X$, and $\gamma=[p,q]$ be a geodesic transverse to $\gamma$. If $\gamma\cap\Lambda=\{x\}$ and $p$ and $q$ lie in different elements of the partition of $Lk(x)-\Lambda$, then $p$ and $q$ lie in different components of $X-\Lambda$.
	\end{lemma}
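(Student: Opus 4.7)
My plan is to run a standard disk diagram argument leveraging simple connectedness of $X$ and the leafless convex tree structure of $\Lambda$ from Lemma \ref{lem: edge hypergraphs are leafless convex}. I would suppose for contradiction that $p$ and $q$ lie in the same component of $X - \Lambda$, witnessed by a path $\gamma' \subset X - \Lambda$ from $p$ to $q$, so that $\alpha = \gamma \cdot \gamma'^{-1}$ is a null-homotopic loop in $X$ bounding some disk diagram $\phi : D \to X$, which I would choose of minimal area. Pulling the hypergraph back to $\tilde{\Lambda} := \phi^{-1}(\Lambda) \subseteq D$, transversality at $x$ together with $\gamma' \cap \Lambda = \emptyset$ forces $\tilde{\Lambda} \cap \partial D = \{\tilde{x}\}$, and the partition-separation hypothesis on $p, q$ ensures that $\tilde{\Lambda}$ genuinely enters the interior of $D$ at $\tilde{x}$ (rather than merely touching $\partial D$).

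The technical heart of the argument will be to show that $\tilde{\Lambda}$ has no leaves in the interior of $D$. Inside each $2$-cell of $D$, $\tilde{\Lambda}$ is a disjoint union of arcs inherited from the face-segments of $\Lambda$ in $X$; at an interior edge of $D$, each such arc continues into the adjacent $2$-cell; and at an interior vertex $v$ of $D$ mapping to some $v_0 \in \Lambda$, the link $Lk_D(v)$ is a closed cycle in $Lk_X(v_0)$, so properness of the cutset $C_{v_0}$ together with $|C_{v_0}| \geq 2$ forces any cycle meeting $C_{v_0}$ to do so at least twice. Hence every interior vertex of $D$ has degree $0$ or $\geq 2$ in $\tilde{\Lambda}$.

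With no interior leaves, $\tilde{\Lambda}$ has $\tilde{x}$ as its unique endpoint, so tracing $\tilde{\Lambda}$ from $\tilde{x}$ into $D$ and using finiteness, the trace must close into an embedded loop $\tilde{\gamma} \subseteq \tilde{\Lambda}$ bounding a non-degenerate subdisk $D' \subseteq D$ whose boundary $\phi$-projects into the tree $\Lambda$. Since $\Lambda$ is simply connected, $\phi|_{\partial D'}$ is null-homotopic in $\Lambda$, so I would replace $\phi|_{D'}$ by a degenerate (tree-like) diagram of strictly smaller area, contradicting minimality of $D$. The hardest part, I expect, will be rigorously controlling the local structure of $\tilde{\Lambda}$ at interior vertices when $C_{v_0}$ carries a partition $P_{v_0}$ with more than two classes: here the naive parity (handshake) argument fails and one must lean on properness of the cutset, together with equatability of partitions across edges, to guarantee that any meeting of $Lk_D(v)$ with $C_{v_0}$ produces at least two edges of $\tilde{\Lambda}$ at $v$.
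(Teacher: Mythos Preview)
Your disk-diagram approach is genuinely different from the paper's argument. The paper never builds a van Kampen diagram; instead it exploits the $CAT(0)$ nearest-point retraction $\pi_\Lambda:X\to\Lambda$ onto the convex tree $\Lambda$, defines a \emph{$(\Lambda,\epsilon)$-balanced path} (a path in $\mathcal N_\epsilon(\Lambda)\setminus\Lambda$ whose $\pi_\Lambda$-fibres all have even cardinality) together with a complexity $m_\Lambda(\sigma)$, and then inducts on the pair $(m_\Lambda(\sigma),l(\sigma))$. The base case $m_\Lambda(\sigma)=1$ is exactly where equatability of partitions along an edge is invoked, and the inductive step pushes along $\sigma$ to the far end of the first edge of $\Lambda$ it travels beside. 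Your approach has the advantage of being more combinatorial and of making the role of simple connectivity of $X$ explicit; the paper's approach has the advantage of directly tracking the partition data along edges of $\Lambda$ rather than just the underlying set $\phi^{-1}(\Lambda)$.

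There is, however, a genuine gap in your plan in the case of \emph{non-proper vertex cutsets}, which the paper explicitly allows (and uses, e.g.\ for the $*$-separated graph $F26A$). Your ``no interior leaves'' step needs that a closed immersed walk in $Lk_X(v_0)$ cannot meet $C_{v_0}$ exactly once. For proper edge cutsets this is the parity argument you sketch, and for proper vertex cutsets it also holds: if the walk meets $C_{v_0}$ only at $w$, the two neighbours of $w$ on the walk lie in distinct components of $Lk_X(v_0)-C_{v_0}$ yet are joined by the remainder of the walk, a contradiction. But if $C_{v_0}$ is a non-proper vertex cutset, a closed walk can pass through a single $w\in C_{v_0}$ whose two neighbours along the walk lie in the \emph{same} component of $Lk_X(v_0)-C_{v_0}$, and then $\tilde\Lambda$ genuinely has an interior leaf at $v$. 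At that point your trace from $\tilde x$ can terminate without producing a cycle, and the area-reduction never fires. You flag this as the hard case and appeal to equatability, but equatability is a compatibility statement between partitions at \emph{adjacent} vertices of $\Lambda$; it says nothing about the local degree of $\tilde\Lambda$ at a single interior vertex of $D$, and it is not clear how to feed it into the disk-diagram picture without essentially rebuilding the paper's balanced-path induction on top of it. A secondary point: for edge hypergraphs your cycle $\tilde\gamma$ cuts through interiors of $2$-cells, so ``replace $D'$ by a degenerate diagram'' needs a notion of area beyond the combinatorial cell count; this is fixable but should be made precise.
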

	Before we prove this, we need to define some technology.

	\begin{definition}[Hypergraph retraction]
	 Let $X$ be a $CAT(0)$ space and $\Lambda$ a hypergraph in $X$. The projection map $$\pi_{\Lambda}:X\rightarrow\Lambda$$ maps every point in $X$ to its nearest point in $\Lambda$. Since $\Lambda$ is convex, this map is a deformation retraction. That is, we have a homotopy $\pi_{\Lambda}^{\delta}$ from the identity to $\pi_{\Lambda}.$
	\end{definition}
	\begin{definition}[$\Lambda$-balanced paths]
	  Let $\Lambda$ be a hypergraph in $X$ and $\epsilon>0$. For points $p,q$ lying in the same component of $X-\Lambda$, a \emph{$(\Lambda,\epsilon)$-balanced path} from $p$ to $q$ is a path $\sigma$ starting at $p$ and ending at $q$ such that:
	  
	 \begin{enumerate}[label=\roman*)]
	     \item $\sigma \subseteq \mathcal{N}_{\epsilon}(\Lambda)-\Lambda,$
	     \item and for any $x\in \Lambda$, $\vert(\pi_{\Lambda})^{-1}(x)\cap \sigma\vert$ is even, and in particular finite.
	 \end{enumerate}
	 
Given such a path, we define $$m_{\Lambda}(\sigma)=\frac{1}{2}\sum \limits_{e\in E(\Lambda\cap \pi_{\Lambda}(\sigma))}\max\limits_{y\in e} \vert(\pi_{\Lambda})^{-1}(y)\cap \sigma\vert. $$
	\end{definition}
By considering the retraction map, we see that such paths exist.	
	\begin{lemma}\label{lem: existence of balanced paths}
	    Let $\Lambda$ be a hypergraph in $X$, $\epsilon>0$, and let $p,q\in \mathcal{N}_{\epsilon}(\Lambda)$ be points lying in the same component of $X-\Lambda$. There exists a $(\Lambda,\epsilon)$-balanced path between them.
	\end{lemma}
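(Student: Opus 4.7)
The plan is to push a connecting path close to $\Lambda$ using the deformation retraction $\pi_\Lambda^\delta$, attach short geodesic segments at the endpoints, and then perturb to achieve general position, handling the parity condition separately.

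By hypothesis, $p$ and $q$ lie in the same component of $X-\Lambda$, so fix a continuous path $\alpha\colon[0,1]\to X-\Lambda$ with $\alpha(0)=p$ and $\alpha(1)=q$. Its image $\alpha([0,1])$ is compact and disjoint from the closed set $\Lambda$, so $M=\max_{t}d(\alpha(t),\Lambda)$ is finite and positive. The retraction $\pi_\Lambda^\delta$ moves each point along its geodesic toward $\pi_\Lambda(x)$ and so satisfies $d(\pi_\Lambda^\delta(x),\Lambda)=(1-\delta)\,d(x,\Lambda)$. Hence for any $\delta_0\in(0,1)$ with $(1-\delta_0)M<\epsilon$, the composition $\pi_\Lambda^{\delta_0}\circ\alpha$ is a continuous path lying in $\mathcal{N}_\epsilon(\Lambda)-\Lambda$, joining $\pi_\Lambda^{\delta_0}(p)$ to $\pi_\Lambda^{\delta_0}(q)$.

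Next, concatenate with the subsegments of the geodesics from $p$ to $\pi_\Lambda(p)$ and from $\pi_\Lambda(q)$ to $q$, truncated at $\pi_\Lambda^{\delta_0}(p)$ and $\pi_\Lambda^{\delta_0}(q)$ respectively. Since $p,q\in\mathcal{N}_\epsilon(\Lambda)$, these sub-geodesics lie in $\mathcal{N}_\epsilon(\Lambda)$ and strictly inside $X-\Lambda$, so the concatenation $\sigma_0$ is a continuous path from $p$ to $q$ in $\mathcal{N}_\epsilon(\Lambda)-\Lambda$. Now perturb $\sigma_0$ within $\mathcal{N}_\epsilon(\Lambda)-\Lambda$ rel endpoints to a piecewise linear path $\sigma$ that is transverse to the family of fibers $\pi_\Lambda^{-1}(x)$. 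Since $X$ is a polygonal complex and each fiber is a convex piecewise linear subset, generic perturbation achieves transversality, giving finiteness of $\vert\pi_\Lambda^{-1}(x)\cap\sigma\vert$ for every $x\in\Lambda$.

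The main obstacle will be the evenness condition (ii). The plan is to correct parities by finitely many local round-trip modifications: for each fiber $\pi_\Lambda^{-1}(x)$ whose count is odd, splice into $\sigma$ a small loop based in $X-\Lambda$ that crosses only that fiber and does so exactly twice, leaving the counts over all other fibers unchanged. Such correcting loops exist because $\mathcal{N}_\epsilon(\Lambda)-\Lambda$ has the multi-sheeted tubular structure prescribed by the cutset-partition data of $\Lambda$: a round-trip inside one sheet crosses a prescribed nearby fiber twice, and the fact that $p$ and $q$ lie in the same component ensures that the loop can be attached at a basepoint reachable from $\sigma$ without crossing $\Lambda$. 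Since $\sigma$ has compact image and hence meets only finitely many fibers nontrivially, finitely many such corrections produce the desired $(\Lambda,\epsilon)$-balanced path.
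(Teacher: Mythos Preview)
Your first two paragraphs are fine and essentially match the paper's approach: take a path in $X-\Lambda$, push it into $\mathcal{N}_\epsilon(\Lambda)-\Lambda$ via the retraction $\pi_\Lambda^\delta$, and (you add, correctly) reattach short radial segments to recover the original endpoints.

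The parity argument, however, has a genuine gap. Your correcting mechanism is to splice in a small loop that crosses the offending fiber exactly \emph{twice}. But adding two crossings to an odd count yields another odd count: this preserves parity rather than correcting it. A loop crossing the fiber \emph{once} would change parity, but no such loop exists in $\mathcal{N}_\epsilon(\Lambda)-\Lambda$: any closed loop there projects under $\pi_\Lambda$ to a closed loop in $\Lambda$, and since $\Lambda$ is a tree (this is Lemma~\ref{lem: edge hypergraphs are leafless convex}), that projected loop is null-homotopic and so meets every point of $\Lambda$ an even number of times. Hence your correction scheme cannot work, and in fact this same observation shows why no ad hoc correction is needed.

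This is exactly the point the paper exploits. The paper's proof is a single sentence: the projection $\pi_\Lambda\circ\sigma_\delta$ is a loop in the tree $\Lambda$, so after a small homotopy every point of $\Lambda$ is hit an even (and finite) number of times. The tree property of $\Lambda$ is the whole engine of the parity condition, and you never invoke it. Rather than treating parity as an obstacle to be patched fiber-by-fiber, use the fact that the projected path is a null-homotopic map into a tree.
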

	\begin{proof}
Let $\gamma$ be a path from $p$ to $q$ not intersecting $\Lambda$. By taking $\delta$ close to $1$, we have that $\sigma_{\delta}:=\pi_{\Lambda}^{\delta}(\gamma)\subseteq \mathcal{N}_{\epsilon}(\Lambda)-\Lambda$. Since $\sigma_{\delta}$ maps to a loop in $\Lambda$, which is a tree, it immediately follows that by taking $\delta$ close to $1$, and after a small homotopy, for any $y\in \Lambda$, $\vert(\pi_{\Lambda})^{-1}(y)\cap \sigma_{\delta}\vert$ is even, and in particular finite.
	\end{proof}
	
We can now prove Lemma \ref{lem: separation condition}.
\begin{proof}[Proof of Lemma \ref{lem: separation condition}]
Throughout we choose $\epsilon>0$ sufficiently small so that for any pair of vertices $v,w$ of $X$, $\mathcal{N}_{\epsilon}(v)\cap\mathcal{N}_{\epsilon}(w)=\emptyset$.

Let $P$ be the partition of $Lk(x)$ through which $\Lambda$ passes.
  Let $P_{1}$ be the element of $P$ containing $p$ and $P_{2}$ the element of $P$ containing $q$. We may assume that $p,q\in \mathcal{N}_{\epsilon}(\Lambda)$: otherwise, choose $w_{i}$ lying in $P_{i}$ such that $w_{i}\in \mathcal{N}_{\epsilon}$. Then $w_{1}$ and $p$ lie in the same component of $X-\Lambda$ and $w_{2}$ and $q$ lie in the same component of $X-\Lambda$. Suppose $p$ and $q$ lie in the same component of $X-\Lambda$. We first choose $p'\in P_{1}$, $q'\in P_{2}$ and $\sigma $ a $\Lambda$-balanced path between $p'$ and $q'$ so that the pair $(m_{\Lambda}(\sigma),l(\sigma))$ is minimal by lexicographic ordering amongst all such $p',q',\sigma$. We induct on $(m_{\Lambda}(\sigma),l(\sigma))$ by lexicographic ordering.
  
  If $m_{\Lambda}(\sigma)=1$, then $\sigma$ passes along exactly one edge $e$ of $\Lambda$: it follows that the partitions of $X-\Lambda$ at the endpoints of $e$ are not equatable along $e$, or that $P_{1}=P_{2}$, a contradiction.
  
  Otherwise $ m_{\Lambda}(\sigma)=m\geq 2$. Suppose the first and last edges of $\Lambda$ traversed by $\sigma$ are the same edge $e$. Note that we may always travel along $\sigma$ to put ourselves in the situation assumed above: this is analogous to the classical situation of pushing to a leaf in a tree for graph theory arguments. In particular, if this is not true, then move along $\sigma$, starting at $q'$, until we return to $\mathcal{N}_{\epsilon}(x)$. Let $s$ be the point we reach in $\mathcal{N}_{\epsilon}(x)$. If $s$ is in the same component as $q'$ in $P$ then $m_{\Lambda}(\sigma\vert_{[p',s]})\leq  m_{\Lambda}(\sigma)$, and $l(\sigma\vert_{[p',s]})< l(\sigma)$, so that $$(m_{\Lambda}(\sigma\vert_{[p',s]}),l(\sigma\vert_{[p',s]}))<(m_{\Lambda}(\sigma),l(\sigma)),$$ a contradiction as $p',q',\sigma$ where chosen so this pair was minimal. If $s$ is in the same component as $p'$ in $P$, and is not equal to $p'$, then $m_{\Lambda}(\sigma\vert_{[s,q']})\leq  m_{\Lambda}(\sigma)$, and $l(\sigma\vert_{[s,q']})< l(\sigma)$, so that again $$(m_{\Lambda}(\sigma\vert_{[s,q']}),l(\sigma\vert_{[s,q']}))<(m_{\Lambda}(\sigma),l(\sigma))$$ a contradiction.  Therefore if $s \neq p'$, then $s$ lies in a different component to $q'$ in $P$: we have $(m_{\Lambda}(\sigma\vert_{[s,q']}),l(\sigma\vert_{[s,q']}))<(m_{\Lambda}(\sigma),l(\sigma))$, and hence by induction $s$ must lie in a separate component of $X-\Lambda$ to $q'$, a contradiction as $q'$ is connected to $s$ by a path not intersecting $\Lambda$. 
  Therefore by induction we have that $s=p'$.
  
  Let $y$ be the endpoint of $e$ distinct from $x$, let $\alpha$ be the the point obtained by pushing $p'$ along $\sigma $ to $\mathcal{N}_{\epsilon}(y)$, and similarly $\beta $ be the the point obtained by pushing $q'$ along $\sigma $ to $\mathcal{N}_{\epsilon}(y)$. Let $\sigma'$ be the subpath of $\sigma$ connecting $\alpha$ and $\beta$.
  
  If $\alpha$ and $\beta$ lie in the same component of the partition of $Lk(y)-\Lambda$, then the partitions are not equatable along $e$, a contradiction. Otherwise $(m_{\Lambda}(\sigma'),l(\sigma '))< (m_{\Lambda}(\sigma),l(\sigma))$, and so by induction $\alpha$ and $\beta$ lie in distinct components of $X-\Lambda$.
  As $p'$ is connected to $\alpha$ by a path not intersecting $\Lambda$, and $q'$ to $\beta$, we see that $p'$ and $q'$ lie in distinct components. Since $p$ is connected to $p'$ and $q$ is connected to $q'$ by a path not intersecting $\Lambda$, the result follows.
\end{proof}

%----------------------------------------------------------------------------------------
%   Hypergraph stabilizers and wallspaces
%----------------------------------------------------------------------------------------
\subsection{Hypergraph stabilizers and wallspaces}\label{subsection: Hypergraph stabolizers and wallspaces}	
We now want to use the construction of a cube complex dual to a system of walls, as found in \cite{Hruska-Wise}. Their definition of a wallspace is more general than the one we require, and so we restrict to the case that $X$ is endowed with a metric.
\begin{definition}[\emph{Walls}]
    Let $X$ be a metric space. A \emph{wall} is a pair $\{U,V\}$ such that $X=U\cup V$. The \emph{open halfspaces} associated to the wall are $U-(U\cap V)$ and $V-(U\cap V)$. We say a wall \emph{betwixts} a point $x$ if $x\in U\cap V$, and \emph{separates} the points $x,y$ if $x$ and $y$ lie in distinct open halfspaces. We write $\#(x,y)$ for the number of walls separating $x$ and $y$.
\end{definition}

\begin{definition}[\emph{Wallspace}]
 A \emph{wallspace} is a pair $(X,\mathcal{W})$, where $X$ is a connected metric space and $\mathcal{W}$ is a collection of walls in $X$ such that;
 \begin{enumerate}[label=\roman*)]
     \item for any $x\in X$, finitely many walls in $\mathcal{W}$ betwixt $x$,
     \item for any $x,y\in X$, $\#(x,y)<\infty$,
     \item and there are no duplicate walls that are genuine partitions.
 \end{enumerate}
 We say a group $G$ \emph{acts} on a wallspace $(X,\mathcal{W})$ if $G$ acts on $X$, and $G\cdot\mathcal{W}=\mathcal{W}$.
\end{definition}
\begin{definition}[\emph{$\Lambda$ walls}]
    Let $\Lambda$ be a vertex or edge hypergraph in $X$, with disjoint components $X-\Lambda=\{U^{i}_{\Lambda}\}_{i}$. For each $U_{\Lambda}^{i}$, let $V_{\Lambda}^{i}=X-\overline{U_{\Lambda}^{i}}$.
The \emph{set of $\Lambda$ walls} is the set

$$\mathcal{W}_{\Lambda}=\bigg\{\{\overline{U_{\Lambda}^{i}},\overline{V_{\Lambda}^{i}}\}\;:\;U_{\Lambda}^{i}\mbox{ a component of }X-\Lambda\bigg\}.$$
The \emph{hypergraph wallspace} is the set of walls 
$$\mathcal{W}=\cup_{\Lambda}\mathcal{W}_{\Lambda},$$
where we remove any duplicate walls.
\end{definition}

We now show that the pair $(X,\mathcal{W})$ is a wallspace. There are several easy but technical steps to this.

\begin{lemma}\label{lem: wall stabilizers are cocompact}
  Let $H_{\Lambda}=Stab_{G}(\Lambda)$, and for any $i$, let $H_{\Lambda,i}=Stab_{H_{\Lambda}}(U_{\Lambda}^{i}).$ Then $H_{\Lambda, i}$ acts cocompactly on $\partial U_{\Lambda}^{i}.$ 
\end{lemma}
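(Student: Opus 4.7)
The plan is to exploit the fact, noted at the end of Section \ref{subsection: Constructing hypergraphs in polygonal complexes}, that $H_{\Lambda}$ already acts properly discontinuously and cocompactly on $\Lambda$ itself. Since $U_{\Lambda}^{i}$ is a component of $X-\Lambda$, its topological boundary $\partial U_{\Lambda}^{i}$ sits inside $\Lambda$, so it is enough to find a compact subset $D'\subseteq \Lambda$ with $H_{\Lambda,i}\cdot D'\supseteq \partial U_{\Lambda}^{i}$.

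First I would fix a compact fundamental domain $D\subseteq\Lambda$ for the $H_{\Lambda}$-action and list the components of $X-\Lambda$ whose boundary meets $D$. Because $G$ acts cocompactly on $X$ and $X$ is a polygonal complex, $X$ is locally finite; a compactness argument then shows that only finitely many components $U_{\Lambda}^{j_1},\dots,U_{\Lambda}^{j_m}$ of $X-\Lambda$ have boundary meeting $D$. Since every component has boundary touching some $H_{\Lambda}$-translate of $D$, it follows that $H_{\Lambda}$ has only finitely many orbits on the set of components of $X-\Lambda$.

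Next, for each index $j_k$ lying in the $H_{\Lambda}$-orbit of $i$, I would choose once and for all an element $g_{k}\in H_{\Lambda}$ with $g_{k}U_{\Lambda}^{i}=U_{\Lambda}^{j_k}$, and set
\[
D'=\bigcup_{k\,:\,j_k\in H_{\Lambda}\cdot i} g_{k}^{-1}D,
\]
which is a finite union of compact sets and hence compact. Given any $x\in\partial U_{\Lambda}^{i}$, pick $h\in H_{\Lambda}$ with $h^{-1}x\in D$; then $h^{-1}x\in D\cap\partial (h^{-1}U_{\Lambda}^{i})$, so $h^{-1}U_{\Lambda}^{i}=U_{\Lambda}^{j_k}=g_{k}U_{\Lambda}^{i}$ for some $k$. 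Setting $\kappa:=g_{k}^{-1}h^{-1}$ gives $\kappa\in H_{\Lambda,i}$ (it fixes $U_{\Lambda}^{i}$) and $\kappa x\in g_{k}^{-1}D\subseteq D'$, which yields the desired cocompactness.

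The main thing to be careful about is the finiteness of the orbit set of components meeting $D$: this is the only step that genuinely uses the ambient cocompact action of $G$ on $X$ (via local finiteness of $X$) rather than just cocompactness of $H_{\Lambda}$ on $\Lambda$. Once that is in hand, the bookkeeping with coset representatives $g_{k}$ is formal, so I do not expect further obstacles.
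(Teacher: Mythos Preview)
Your argument is correct and is essentially the same as the paper's (which in turn follows \cite[Theorem 2.9]{Hruska-Wise}): both hinge on the observation that $H_{\Lambda}$ acts cocompactly on $\Lambda$ and has only finitely many orbits on the components of $X-\Lambda$, so the stabilizer of a single component acts cocompactly on its boundary. The only cosmetic difference is that the paper routes the argument through the frontier $\partial_{f}\Lambda=\{x:0<d(x,\Lambda)\le 1\}$ and its partition into the pieces $U_{\Lambda}^{i}\cap\partial_{f}\Lambda$, whereas you work directly inside $\Lambda$ with $\partial U_{\Lambda}^{i}$ and explicit coset representatives $g_{k}$; the underlying logic is identical.
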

This Lemma follows immediately from the proof of \cite[Theorem 2.9]{Hruska-Wise}. We include the argument here for completeness. For a set $A$ in a metric space $(X,d)$, we define the \emph{frontier of $A$} as the set $\partial_{f}A=\{x\in X\;\vert 0<d(x,A)\leq 1\}.$
\begin{proof}
Note that $H_{\Lambda}$ acts cocompactly on $\Lambda$ and so on $\partial_{f}\Lambda$. Furthermore $H_{\Lambda}$ preserves the partition of $\partial_{f}\Lambda$ into $U_{\Lambda}^{i}\cap \partial_{f}\Lambda$. Hence $H_{\Lambda,i}$ acts properly discontinuously and cocompactly on $\partial_{f}U_{\Lambda}^{i}$, and therefore on $\partial U_{\Lambda}^{i}$.
\end{proof}

\begin{lemma}\label{lem: finitely many wall orbits}
There are finitely many $G$-orbits of walls in $\mathcal{W}$.
\end{lemma}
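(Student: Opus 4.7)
The plan is to reduce the claim to a local finiteness statement inside $X$. First I would observe that there are only finitely many $G$-orbits of hypergraphs: every hypergraph $\Lambda$ is by construction a lift to $X$ of one of the finitely many connected images $\underline{\Lambda}^1,\dots,\underline{\Lambda}^m$ in $G\backslash X$, and any two lifts of the same $\underline{\Lambda}^i$ differ by a $G$-translate. Since each wall in $\mathcal{W}_\Lambda$ has the form $\{\overline{U},\overline{X-U}\}$ for $U$ a component of $X-\Lambda$, it is enough to fix a representative $\Lambda$ from each of the finitely many orbits and prove that $H_\Lambda=\mathrm{Stab}_G(\Lambda)$ acts with finitely many orbits on $\pi_0(X-\Lambda)$.

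Next I would exploit cocompactness of the $H_\Lambda$-action on $\Lambda$ (which holds by construction) to pick a compact fundamental domain $K\subseteq\Lambda$. For any component $U$ of $X-\Lambda$ we have $U\neq X$ and $X$ is connected, so $\partial U$ is a nonempty subset of $\Lambda$. Therefore we may translate $U$ by some element of $H_\Lambda$ so that $\partial U\cap K\neq\emptyset$. In particular every $H_\Lambda$-orbit on $\pi_0(X-\Lambda)$ has a representative whose closure meets the compact set $K$, and the statement reduces to showing that only finitely many components of $X-\Lambda$ have this property.

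For the local finiteness step I would use that the proper, cocompact action of $G$ on $X$ forces $X$ to be locally finite (the quotient $G\backslash X$ is a finite polygonal complex and every vertex stabilizer is finite, so every link $Lk_X(v)$ is a finite cover of a link in $G\backslash X$). For each $p\in X$ I would choose a small open neighbourhood $N_p$ inside the open star of $p$; because $\Lambda$ passes through a $\pi$-separated cutset $C$ with a specific finite partition $P$ of $\pi_0(Lk(p)-C)$, the set $N_p\setminus\Lambda$ has only finitely many components. Cover the compact set $K$ by finitely many such neighbourhoods $N_{p_1},\dots,N_{p_k}$. If $U\in\pi_0(X-\Lambda)$ satisfies $\overline{U}\cap K\neq\emptyset$, then for some $j$ we have $\overline{U}\cap N_{p_j}\neq\emptyset$; since $N_{p_j}$ is open and $U$ is open, this forces $U\cap N_{p_j}\neq\emptyset$, so $U$ must contain at least one component of $N_{p_j}\setminus\Lambda$ (distinct components of $N_{p_j}\setminus\Lambda$ lying in the same $U$ are allowed, but each one is in a unique $U$). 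Since each $N_{p_j}\setminus\Lambda$ has finitely many components and there are finitely many $j$, only finitely many components $U$ can occur, completing the proof.

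The main obstacle is the final local finiteness step: one has to pass correctly from the local data (components of $N_{p_j}\setminus\Lambda$, controlled by the finite partition at $p_j$) to the global components of $X-\Lambda$, and in particular verify that $\overline{U}\cap N_{p_j}\neq\emptyset$ implies $U\cap N_{p_j}\neq\emptyset$ so that the pigeonhole argument on local components actually controls the global ones. Everything else (finiteness of hypergraph orbits, nonempty frontier of components, cocompact action on $\Lambda$) is either immediate from the construction or has been set up in the previous lemmas.
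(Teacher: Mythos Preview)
Your proposal is correct and follows essentially the same approach as the paper: reduce to finitely many $G$-orbits of hypergraphs, then show each $H_\Lambda$ has finitely many orbits on $\pi_0(X-\Lambda)$. The paper's proof is a two-line sketch that simply asserts both facts; you have supplied the details for the second step via the cocompact-fundamental-domain-plus-local-finiteness argument, which is the natural way to justify what the paper leaves implicit (and is in the spirit of the frontier argument used in the immediately preceding Lemma~\ref{lem: wall stabilizers are cocompact}).
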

\begin{proof}
There are finitely many $G$-orbits of hypergraphs $\Lambda$, and there are finitely many $H_{\Lambda}$ orbits of $U_{\Lambda}^{i}$. The result follows.
\end{proof}

\begin{lemma}\label{lem: W is a wallspace}
    The pair $(X,\mathcal{W})$ is a wallspace.
\end{lemma}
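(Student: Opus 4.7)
The plan is to verify the three defining axioms of a wallspace from Section~\ref{subsection: Hypergraph stabolizers and wallspaces} in turn: condition (iii) is immediate from the construction of $\mathcal{W}$ (we removed duplicates by definition), so the content lies in (i) and (ii). Both will follow from a single underlying fact: the family of hypergraphs in $X$ is locally finite, which I will extract from the properly discontinuous cocompact action of $G$ on $X$.

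I would first record the local finiteness fact explicitly: since $G\backslash X$ has finitely many cells and the graph $\Sigma$ in Section~\ref{subsection: Constructing hypergraphs in polygonal complexes} is finite (its vertex set is indexed by the finitely many pairs $(C,P)\in \mathcal{CP}$, each with multiplicity $\mu(C,P)$), there are only finitely many $G$-orbits of hypergraphs in $X$. Together with proper discontinuity of the $G$-action and the fact that each hypergraph meets a given cell of $X$ in an arc determined by a lift of one edge of $\Sigma$, this implies: any cell of $X$ meets only finitely many hypergraphs, and any compact subset of $X$ meets only finitely many hypergraphs.

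For (i), a wall $\{\overline{U^i_\Lambda},\overline{V^i_\Lambda}\}$ betwixts $x$ iff $x\in \overline{U^i_\Lambda}\cap\overline{V^i_\Lambda}=\partial U^i_\Lambda\subseteq \Lambda$. Hence only hypergraphs through $x$ can contribute walls betwixting $x$, and by the local finiteness above there are only finitely many such hypergraphs. For each $\Lambda$ through $x$, the components $U^i_\Lambda$ whose closure contains $x$ correspond bijectively to parts of the partition $P$ of $\pi_0(Lk(x)-\Lambda)$, which is a finite set. Summing gives only finitely many walls betwixting $x$.

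For (ii), suppose the wall $\{\overline{U^i_\Lambda},\overline{V^i_\Lambda}\}$ separates $x,y$, with (say) $x\in U^i_\Lambda$ and $y\in X-\overline{U^i_\Lambda}$. The unique $CAT(0)$ geodesic $[x,y]$ goes from the open component $U^i_\Lambda$ to its complement, so it must meet $\partial U^i_\Lambda\subseteq \Lambda$; thus $\Lambda$ intersects $[x,y]$. Since $[x,y]$ is compact, local finiteness gives only finitely many such $\Lambda$. For each such $\Lambda$, the open components of $X-\Lambda$ are pairwise disjoint and open, so if $x$ lies in $U^{i_0}_\Lambda$ and $y$ lies in $U^{j_0}_\Lambda$ with $i_0\neq j_0$, then only the walls associated to $U^{i_0}_\Lambda$ and $U^{j_0}_\Lambda$ can separate $x$ and $y$. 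Hence $\#(x,y)\leq 2\,|\{\Lambda : \Lambda\cap[x,y]\neq\emptyset\}|<\infty$, completing the verification.

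The main obstacle is justifying local finiteness of the hypergraph system cleanly; the subtlety is that a single hypergraph could a priori wind through many translates of a compact region, so one must use proper discontinuity of $G$ (not merely cocompactness) together with finiteness of $\Sigma$ to conclude that each cell of $X$ is met by only finitely many hypergraphs.
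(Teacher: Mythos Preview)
Your proof is correct and follows essentially the same strategy as the paper's: both reduce the verification of axioms (i) and (ii) to a local finiteness statement derived from the cofinite $G$-action (the paper cites Lemmas~\ref{lem: wall stabilizers are cocompact} and~\ref{lem: finitely many wall orbits} and says the result ``follows immediately'', while you unpack local finiteness directly from the finiteness of $\Sigma$ together with proper discontinuity). Your version is more explicit---working at the level of hypergraphs and then bounding the number of walls per hypergraph---but the underlying idea is the same.
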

\begin{proof}
First we prove finitely many walls betwixt points. Since the set of walls is acted upon cofinitely by $G$, and each wall has a cocompact stabilizer, this follows immediately. In a similar manner we can observe $\#(x,y)<\infty$ for any $x$ and $y$.
\end{proof}

Therefore we have constructed a wallspace for $X$. 
\begin{lemma}\label{lem: wallspace separation from hypergraph separation}
    Let $\mathcal{W}$ be constructed as above. Then $\#(x,y)\geq\#_{\Lambda}(x,y)$.
\end{lemma}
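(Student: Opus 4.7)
The plan is to define an injection from the set of hypergraphs that separate $x$ and $y$ into the set of walls in $\mathcal{W}$ that separate $x$ and $y$. Given any hypergraph $\Lambda$ that separates $x$ and $y$, let $U^{x}_{\Lambda}$ denote the component of $X - \Lambda$ containing $x$, and set $V^{x}_{\Lambda} = X - \overline{U^{x}_{\Lambda}}$. The wall $W_{\Lambda} := \{\overline{U^{x}_{\Lambda}}, \overline{V^{x}_{\Lambda}}\}$ belongs to $\mathcal{W}_{\Lambda} \subseteq \mathcal{W}$ by construction, so this assigns a candidate wall to each separating hypergraph.

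Next I would verify that $W_{\Lambda}$ actually separates $x$ and $y$. Since $U^{x}_{\Lambda}$ is open, $x$ has a neighbourhood inside $U^{x}_{\Lambda}$, which is disjoint from $V^{x}_{\Lambda} = X - \overline{U^{x}_{\Lambda}}$, so $x \notin \overline{V^{x}_{\Lambda}}$; hence $x$ lies in the open halfspace $\overline{U^{x}_{\Lambda}} \setminus \overline{V^{x}_{\Lambda}}$. Now let $U^{y}_{\Lambda}$ be the component of $X - \Lambda$ containing $y$; by hypothesis $U^{y}_{\Lambda} \neq U^{x}_{\Lambda}$, so the open set $U^{y}_{\Lambda}$ is disjoint from $U^{x}_{\Lambda}$. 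Every limit point of $U^{x}_{\Lambda}$ not already in $U^{x}_{\Lambda}$ lies in $\Lambda$, which is disjoint from the open set $U^{y}_{\Lambda}$, so $U^{y}_{\Lambda} \cap \overline{U^{x}_{\Lambda}} = \emptyset$. Therefore $y \in V^{x}_{\Lambda}$ with $y \notin \overline{U^{x}_{\Lambda}}$, placing $y$ in the opposite open halfspace $\overline{V^{x}_{\Lambda}} \setminus \overline{U^{x}_{\Lambda}}$.

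Finally, I would establish injectivity of $\Lambda \mapsto W_{\Lambda}$ on separating hypergraphs. If $W_{\Lambda_{1}} = W_{\Lambda_{2}}$, then the halfspace containing $x$ is common, so $\overline{U^{x}_{\Lambda_{1}}} = \overline{U^{x}_{\Lambda_{2}}}$. Because each $\Lambda_{i}$ passes through a cutset with at least two elements at every vertex (so every point of $\Lambda_{i}$ is adjacent, in every neighbourhood, to points of at least two distinct components of $X - \Lambda_{i}$), the topological interior of $\overline{U^{x}_{\Lambda_{i}}}$ coincides with $U^{x}_{\Lambda_{i}}$; hence $U^{x}_{\Lambda_{1}} = U^{x}_{\Lambda_{2}}$ as open subsets of $X$, and their common topological frontier is contained in both $\Lambda_{1}$ and $\Lambda_{2}$. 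Using the leaflessness and convexity of Lemma \ref{lem: edge hypergraphs are leafless convex} together with the prescribed cutset-and-partition data at each vertex, the frontier uniquely extends to a single hypergraph, forcing $\Lambda_{1} = \Lambda_{2}$. I expect this injectivity step to be the main obstacle, since it relies on a rigidity statement for the construction of hypergraphs from the gluing data; the remaining separation verification is a direct consequence of the fact that components of $X - \Lambda$ are open with boundary inside $\Lambda$.
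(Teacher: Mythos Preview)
Your approach coincides with the paper's: assign to each separating hypergraph $\Lambda$ the wall determined by the component of $X - \Lambda$ containing $x$, and observe that this wall separates $x$ from $y$. The paper's own proof is a single sentence to this effect followed by ``The result follows''; it neither verifies the separation carefully (your second paragraph does this cleanly) nor mentions injectivity.

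You are right that injectivity of $\Lambda \mapsto W_\Lambda$ is what is needed once duplicate walls have been removed from $\mathcal{W}$, and right to flag the final rigidity step as the weak point. The passage from $U^x_{\Lambda_1} = U^x_{\Lambda_2}$ to $\Lambda_1 = \Lambda_2$ is not justified by what you wrote: in principle two distinct hypergraphs could share the subtree $\partial U^x$ while extending differently into $X \setminus \overline{U^x}$, since the cutset-and-partition data at a vertex of $\partial U^x$ need only agree on the element of the partition facing $U^x$. Leaflessness and convexity alone do not close this off. So there is a genuine gap in your argument at this step --- but the paper's one-line proof has exactly the same gap, simply left implicit. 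For the paper's downstream purposes (the linear lower bound on $\#(x,y)$ feeding into the proof of Theorem~\ref{mainthm: cubulating groups}) the subtlety does not affect the conclusion, but as a standalone proof of the lemma neither argument fully handles it.
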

\begin{proof}
Note that if a hypergraph $\Lambda$ separates $x$ and $y$, by taking $i$ such $x\in U_{\Lambda}^{i}$, it follows that $W_{\Lambda}^{i}$ separates $x$ and $y$. The result follows.\end{proof}

Next we discuss transverse walls.
\begin{definition}[\emph{Transverse}]
    Two walls $W=\{U,V\}$ and $W'=\{U',V'\}$ are \emph{transverse} if each of the intersections $U\cap U', U\cap V', V\cap U', V\cap V'$ are nonempty.
\end{definition}
There is an easier formulation for this definition.
\begin{lemma}
    Two walls $W_{\Lambda}^{i},W_{\Lambda'}^{j}$ are transverse if and only if $\partial U_{\Lambda}^{i}\cap \partial U_{\Lambda'}^{j}$ is non-empty. In particular the walls are transverse only if $\Lambda\cap \Lambda'$ is non-empty.
\end{lemma}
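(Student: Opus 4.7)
Both directions will rest on the identity $\overline{U_{\Lambda}^{i}}\cap \overline{V_{\Lambda}^{i}}=\partial U_{\Lambda}^{i}\subseteq \Lambda$, which follows from the definition $V_{\Lambda}^{i}=X-\overline{U_{\Lambda}^{i}}$ together with the fact that $U_{\Lambda}^{i}$, being a connected component of the open set $X-\Lambda$ in a polygonal complex, is regular open; the analogous identity holds for $U_{\Lambda'}^{j}$. Note also that $\overline{U_{\Lambda}^{i}}\cup \overline{V_{\Lambda}^{i}}=X$, and likewise with $\Lambda'$.

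The reverse direction is immediate: any $p\in \partial U_{\Lambda}^{i}\cap \partial U_{\Lambda'}^{j}$ lies simultaneously in each of $\overline{U_{\Lambda}^{i}}$, $\overline{V_{\Lambda}^{i}}$, $\overline{U_{\Lambda'}^{j}}$, $\overline{V_{\Lambda'}^{j}}$, so in particular each of the four pairwise intersections witnessing transversality is nonempty.

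For the forward direction, I would use connectedness twice. First, the set $\overline{U_{\Lambda}^{i}}$ is connected (as the closure of the connected open set $U_{\Lambda}^{i}$), and the closed sets $\overline{U_{\Lambda'}^{j}}$ and $\overline{V_{\Lambda'}^{j}}$ cover $X$. Transversality supplies nonempty intersections of $\overline{U_{\Lambda}^{i}}$ with each of these, so by connectedness they must meet inside $\overline{U_{\Lambda}^{i}}$, producing a point of $\overline{U_{\Lambda}^{i}}\cap \partial U_{\Lambda'}^{j}$. The same argument applied to $\overline{V_{\Lambda}^{i}}$ yields a point of $\overline{V_{\Lambda}^{i}}\cap \partial U_{\Lambda'}^{j}$. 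Second, one needs to establish that $\partial U_{\Lambda'}^{j}$ is a connected subset of $\Lambda'$, in fact a subtree. Given this, $\partial U_{\Lambda'}^{j}$ is covered by the closed sets $\overline{U_{\Lambda}^{i}}\cap \partial U_{\Lambda'}^{j}$ and $\overline{V_{\Lambda}^{i}}\cap \partial U_{\Lambda'}^{j}$, both nonempty, and connectedness forces them to meet at some $p$, which then lies in $\overline{U_{\Lambda}^{i}}\cap\overline{V_{\Lambda}^{i}}\cap \partial U_{\Lambda'}^{j}=\partial U_{\Lambda}^{i}\cap \partial U_{\Lambda'}^{j}$. The ``in particular'' clause then follows immediately, since $\partial U_{\Lambda}^{i}\cap \partial U_{\Lambda'}^{j}\subseteq \Lambda\cap \Lambda'$.

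The main obstacle is establishing that $\partial U_{\Lambda'}^{j}$ is a connected (indeed convex) subtree of $\Lambda'$. Intuitively, as one moves along $\Lambda'$ through a point $x$, the component $U_{\Lambda'}^{j}$ is adjacent to $x$ precisely when some block of the partition $P$ of $Lk(x)-C$ through which $\Lambda'$ passes is contained in $U_{\Lambda'}^{j}$; the equatability condition $(C,P)\sim_{e}(C',P')$ along each edge $e$ of $\Lambda'$ is designed exactly so that such adjacency is preserved as one crosses $e$. A local analysis at each vertex of $\Lambda'$ using the fixed cutset/partition datum, together with this edgewise compatibility, should show that the set of points of $\Lambda'$ adjacent to $U_{\Lambda'}^{j}$ is closed under taking $\Lambda'$-geodesics, giving the required convex subtree property.
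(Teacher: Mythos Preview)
The paper states this lemma without proof, so there is no original argument to compare against. Your approach is correct, and the reverse direction is exactly right. For the forward direction, two remarks.

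First, a small gap: when you say ``the same argument applied to $\overline{V_{\Lambda}^{i}}$'', you are implicitly using that $\overline{V_{\Lambda}^{i}}$ is connected. This is true but deserves a sentence: since $U_{\Lambda}^{i}$ is regular open, $\overline{V_{\Lambda}^{i}}=X\setminus U_{\Lambda}^{i}$ contains the connected tree $\Lambda$, and every other component of $X-\Lambda$ has closure meeting $\Lambda$, so $\overline{V_{\Lambda}^{i}}$ is connected.

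Second, and more substantially: your ``main obstacle''---that $\partial U_{\Lambda'}^{j}$ is connected---has a much cleaner proof than the equatability sketch you give. Use the CAT(0) nearest-point projection $\pi_{\Lambda'}:X\to\Lambda'$ onto the convex tree $\Lambda'$. For $p\in U_{\Lambda'}^{j}$, the geodesic $[p,\pi_{\Lambda'}(p)]$ meets $\Lambda'$ only at its endpoint (standard for CAT(0) projections onto convex sets), so $[p,\pi_{\Lambda'}(p))\subseteq U_{\Lambda'}^{j}$ and hence $\pi_{\Lambda'}(p)\in\partial U_{\Lambda'}^{j}$. Since $\pi_{\Lambda'}$ is the identity on $\Lambda'\supseteq\partial U_{\Lambda'}^{j}$, this gives $\pi_{\Lambda'}(\overline{U_{\Lambda'}^{j}})=\partial U_{\Lambda'}^{j}$, which is therefore connected as the continuous image of the connected set $\overline{U_{\Lambda'}^{j}}$. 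This avoids any local link analysis and makes the whole forward direction a clean connectedness argument.
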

Using this we can now move on to cubulating groups acting on such complexes.
%----------------------------------------------------------------------------------------
%   CUBULATING GROUPS ACTING ON POLYGONAL COMPLEXES
%----------------------------------------------------------------------------------------
\subsection{Cubulating groups acting on polygonal complexes}\label{subsection: Cubulating groups acting on polygonal complexes}	
We now understand the structure of the hypergraph stabilisers and the separation in the wallspaces $(X,\mathcal{W})$.

For a metric polygonal complex $X$, let $D(X)$ be the maximal circumference of a polygonal face in $X$. We will be considering $G$ acting properly discontinuously and cocompactly on a polygonal complex $X$ so that $D(X)= D(G\backslash X)$ is finite. 

	\begin{lemma}\label{lem: edge hypergraph separation}
		Let $X$ be a simply connected $CAT(0)$ polygonal complex with $G\backslash X$ gluably edge $\pi$-separated. Let $\gamma$ be a finite geodesic in $X$ of length at least $4D(X)$. There exists an edge hypergraph $\Lambda$ that separates the endpoints of any finite geodesic extension of $\gamma$.
	\end{lemma}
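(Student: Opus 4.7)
The plan is to construct an edge hypergraph $\Lambda$ that meets $\gamma$ transversely at a single point $x$ strictly interior to $\gamma$, and then apply Lemma~\ref{lem: separation condition} to separate the endpoints of $\gamma$; extensions will inherit the separation automatically.

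To locate $x$ I would pass to the sub-geodesic $\gamma_{0}\subseteq\gamma$ of points at distance at least $D(X)$ from either endpoint of $\gamma$; since $|\gamma|\geq 4D(X)$, we have $|\gamma_{0}|\geq 2D(X)$. There are two cases. In the first, $\gamma_{0}$ enters the interior of some face $f$ of $X$. Since $X$ is regular, the subdivided face $f$ is a regular $2k$-gon whose $k$ hypergraph ``diameters'' (the edges of $\Delta_{G\backslash X}$ contained in $f$) all pass through its centre and partition $f$ into $2k$ triangular sectors, each meeting $\partial f$ in exactly one half-edge. Let $[p,q]$ be the maximal sub-chord of $\gamma_{0}$ in $\overline{f}$; since this sub-chord passes through the interior of $f$, the points $p$ and $q$ lie on distinct half-edges, so $[p,q]$ crosses at least one diameter at a point $x$. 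I take $\Lambda$ to be the edge hypergraph containing that diameter. In the second case, $\gamma_{0}$ is contained in the $1$-skeleton; since each edge of $X$ has length strictly less than $D(X)$ (any regular face has at least three sides), $|\gamma_{0}|\geq 2D(X)$ forces $\gamma_{0}$ to contain a full edge $e$ of $X$. Let $y$ be the midpoint of $e$ and set $x=y$; by the remark on cage links, $E(Lk(y))$ is the unique $\pi$-separated cutset at the secondary vertex $y$, with canonical partition separating the two half-edges of $e$, and I take $\Lambda$ to be the hypergraph through $y$ with this cutset and partition. In either case $x$ is strictly interior to $\gamma$, and the two directions of $\gamma$ at $x$ lie in distinct partition elements of $Lk(x)-\Lambda$.

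Next I would verify transversality. By Lemma~\ref{lem: edge hypergraphs are leafless convex}, $\Lambda$ is a convex tree, so $\gamma\cap\Lambda$ is a convex subset of $\gamma$: a point or a sub-arc. Since the directions of $\gamma$ at $x$ lie in $Lk(x)-\Lambda$ rather than in $\Lambda$, $\gamma$ does not locally coincide with $\Lambda$ at $x$, which forces $\gamma\cap\Lambda=\{x\}$; Lemma~\ref{lem: separation condition} then places the endpoints of $\gamma$ in distinct components of $X-\Lambda$. For any finite geodesic extension $\gamma'\supseteq\gamma$, $\gamma'\cap\Lambda$ is again convex and contains $x$; if it were a non-trivial sub-arc, convexity combined with the strict interiority of $x$ in $\gamma$ would place a neighbourhood of $x$ in $\gamma$ inside $\Lambda$, contradicting $\gamma\cap\Lambda=\{x\}$. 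Hence $\gamma'\cap\Lambda=\{x\}$ and Lemma~\ref{lem: separation condition} separates the endpoints of $\gamma'$ as well.

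The main obstacle is the degenerate sub-case of the face case in which $[p,q]$ coincides with a diameter of $f$, so that $\gamma$ runs along an edge of $\Lambda$ and the transversality step fails. Since all $k\geq 2$ diameters of $f$ pass through its centre $c$, I would resolve this by replacing $\Lambda$ with the edge hypergraph $\Lambda'$ containing any other diameter through $c$: the two antipodal directions on $Lk(c)\cong S^{1}$ determined by $\Lambda'$ differ from those of $\gamma$, so $\gamma$ crosses $\Lambda'$ transversely at $c$, and the remainder of the argument proceeds unchanged.
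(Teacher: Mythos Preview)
Your proof is correct and follows essentially the same strategy as the paper's: split into the case where the central sub-geodesic enters a face interior versus stays in the $1$-skeleton, produce a hypergraph crossing $\gamma$ transversely at a single interior point, invoke Lemma~\ref{lem: separation condition}, and then pass to geodesic extensions. The $1$-skeleton case and the handling of extensions are identical in spirit to the paper.

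The only genuine difference is in the face case. The paper locates a (primary or secondary) vertex $w$ on $\partial f$ lying strictly between the entry and exit points $u_{1},u_{2}$ of $\gamma$, chooses a cutset $C\ni f$ in $Lk(w)$ together with a partition separating the two endpoints of $f$, and takes $\Lambda$ through $(C,P)$ at $w$; the separation of $u_{1},u_{2}$ then comes from Lemma~\ref{lem: separation condition} applied at (a point on the diameter through) $w$. You instead argue geometrically that the $k$ diameters cut $f$ into $2k$ sectors and that the chord must cross one of them transversely. Both routes produce the same diameter-edge of $\Lambda$ up to relabelling; your version has the advantage of making the transversality hypothesis of Lemma~\ref{lem: separation condition} explicit via the convexity of $\Lambda$ (the paper leaves this implicit), and of flagging and resolving the degenerate case where the chord \emph{is} a diameter by switching to another diameter through the centre. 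One small point: ``the edge hypergraph containing that diameter'' is not unique, but since your crossing point $x$ lies in the interior of the face, the local partition of $Lk(x)-\Lambda$ is just the two sides of that diameter and is independent of which hypergraph through it you pick, so the ambiguity is harmless.
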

	\begin{proof}
		Since $\gamma$ is of length at least $4D(X)$, we can find a subgeodesic $\delta$ of $\gamma$ of length at least $2D(X)$ that starts at a point $v\in X^{(1)}$ and ends at $w\in X^{(1)}$. If $\delta$ passes through the interior of a $2$-cell $f$ then, as $\delta$ is of length at least $2D(X)$, it meets the boundary $\partial f$ at two points $u_{1},u_{2}$. The sides of the polygonal faces are geodesic, and geodesics are unique in $CAT(0)$ spaces, so that there must exist a vertex $w$ in $\partial f$ lying between $u_{1}$ and $u_{2}$.
		
		Choose a cutset $C$ in $Lk(w)$ containing $f$, and let $P$ be a chosen partition of $\pi_{0}(Lk(w)-C)$ so that the endpoints of $f$ lie in different elements of $P$ (this must exist by assumption). Let $\Lambda$ be any hypergraph passing through $(C,P)$ in $Lk(w)$: by Lemma \ref{lem: separation condition}, $\Lambda$ separates the endpoints of the subpath of $\delta$ between $u_{1}$ and $u_{2}$: as geodesics in $X$ are unique, it follows that $\Lambda$ intersects any geodesic extension of $\delta$ exactly once, and so separates the endpoints of any geodesic extension of $\delta$. Otherwise $\delta$ lies strictly in $X^{(1)}$: $\delta$ is of length at least $2D(X)$ and so it must intersect at least two primary vertices. Therefore $\delta$ contains an edge of the form  $[u_{1},u_{2}]$ for some primary vertices $u_{1},u_{2}$: this edge must be geodesic. Furthermore, the geodesic $[u_{1},u_{2}]$ contains a secondary vertex $s$. Let $P$ be a partition  of $Lk(s)-E(s)$ so that the endpoints of $[u_{1},u_{2}]$ lie in different elements of $P$ (this must exist by assumption). Let $\Lambda$ be the hypergraph passing through $(C,P)$ in $Lk(s)$, it follows by Lemma \ref{lem: separation condition} that $\Lambda$ separates the endpoints of any finite geodesic extension of $\delta$.
	\end{proof}
	
Similarly, we have the following.
	\begin{lemma}\label{lem: vertex hypergraph separation}
		Let $X$ be a simply connected $CAT(0)$ polygonal complex with $G\backslash X$ gluably vertex $\pi$-separated. Let $\gamma$ be a finite geodesic in $X$ of length at least $4D(X)$. There exists a vertex hypergraph $\Lambda$ that separates the endpoints of any finite geodesic extension of $\gamma$.
	\end{lemma}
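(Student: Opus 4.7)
The plan is to mirror the proof of Lemma \ref{lem: edge hypergraph separation}, but with the role of face chords replaced by edges of $X$, since a vertex hypergraph lives in $X^{(1)}$ rather than cutting across face interiors. As there, I would extract a subgeodesic $\delta \subseteq \gamma$ of length at least $2D(X)$ and find a vertex hypergraph $\Lambda$ that meets some short subgeodesic of $\delta$ transversely in exactly one point $x$, with the two incoming directions of $\delta$ at $x$ lying in distinct elements of the partition of $Lk(x) - \Lambda$. Lemma \ref{lem: separation condition} then separates the endpoints of this subgeodesic, and convexity of $\Lambda$ (Lemma \ref{lem: edge hypergraphs are leafless convex}) promotes this to separation of the endpoints of any geodesic extension $\gamma$.

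I would split into two cases. In Case A, the interior of $\delta$ meets some vertex $v$ of $X$. The two tangent directions $e_{1},e_{2}\in Lk(v)$ of $\delta$ at $v$ satisfy $d_{Lk(v)}(e_{1},e_{2})\geq \pi$ by the Gromov link condition for $CAT(0)$ geodesics. Clause iv of the definition of vertex $\pi$-separation applied to $Lk(v)$ gives a $\pi$-separated vertex cutset $C$ with $e_{1},e_{2}$ lying in distinct components of $Lk(v) - C$; the partition clause in the definition of gluably vertex $\pi$-separated then supplies a partition $P\in \{P_{i}(C)\}$ separating them, and any vertex hypergraph $\Lambda$ through $(C,P)$ at $v$ has the required local behaviour at $v$.

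In Case B, $\delta$ avoids all vertices of $X$ in its interior. Because $\delta$ has length at least $2D(X)$ it cannot lie inside a single face (of diameter at most $D(X)/2$) nor along a single edge (of length at most $D(X)$), so it must cross some edge $e$ of $X$ transversely at an interior point $x\in e$. Picking an endpoint $v$ of $e$ and invoking the covering property $\bigcup_{i}C_{i}=V(Lk(v))$ yields a cutset $C$ in $Lk(v)$ containing $e$; any partition $P\in\{P_{i}(C)\}$ suffices, since at $x$ the two directions of $\delta$ lie in the interiors of distinct faces adjacent to $e$, hence in different components of $Lk(x) - \Lambda$.

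In either case I shrink to a short enough subgeodesic $\delta_{0}\subseteq \delta$ about the intersection point so that $\delta_{0}\cap \Lambda$ is precisely that single point, apply Lemma \ref{lem: separation condition} to $\delta_{0}$, and promote the conclusion to $\gamma$: the intersection $\gamma \cap \Lambda$ of the geodesic $\gamma$ with the convex tree $\Lambda$ is connected, and transversality at the intersection point forces it to be the same single point, so Lemma \ref{lem: separation condition} applied to $\gamma$ yields the claimed separation of the endpoints. The main obstacle is justifying the transverse edge crossing of Case B, but this is forced by the length bound: a geodesic of length more than $D(X)$ avoiding vertices cannot remain in the $1$-skeleton (each edge has length at most $D(X)$ and edge junctions are vertices), so it enters the interior of some face and must have crossed some boundary edge transversely.
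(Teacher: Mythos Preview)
Your overall strategy matches the paper's, and Case A is correct and corresponds to the paper's second case. The gap is in Case B. You invoke only the covering clause $\bigcup_{i}C_{i}=V(Lk(v))$ to obtain some cutset $C\ni e$ and then assert that \emph{any} partition $P\in\{P_{i}(C)\}$ works because the two incoming directions of $\delta$ at $x$ lie in distinct faces $F,F'$, ``hence in different components of $Lk(x)-\Lambda$.'' But Lemma~\ref{lem: separation condition} requires $p$ and $q$ to lie in different \emph{elements of the partition} of $Lk(x)-\Lambda$, not merely different connected components. At an interior edge point $x\in e$, that partition is the one induced from $(C,P)$ at $v=i(e)$, and it groups the faces adjacent to $e$ according to which element of $P$ their other boundary edge $e_{F},e_{F'}\in Lk(v)$ falls into. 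Since vertex cutsets are not assumed proper, a generic $(C,P)$ with $e\in C$ may well place $e_{F}$ and $e_{F'}$ in the same element, in which case your hypergraph does \emph{not} separate the endpoints.

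This is exactly why the definition of vertex $\pi$-separated includes clause~iii): for any vertex $e$ of $Lk(v)$ and any two neighbours $e_{F},e_{F'}$ of $e$, there is a $\pi$-separated cutset putting them in different components. The paper's proof invokes precisely this clause (together with the partition clause in the definition of gluably $\pi$-separated) to choose $C\ni e$ and $P$ with $F,F'$ genuinely separated; your argument as written uses only clause~ii), which is the content of \emph{weakly} vertex $\pi$-separated and is not enough. Replace the covering-property step by an appeal to clause~iii) and your Case B goes through.
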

	\begin{proof}
		Again, since $\gamma$ is of length at least $4D(X)$, we can write $\gamma=\gamma_{1}\cdot\delta\cdot\gamma_{2}$, where each $\gamma_{i}$ is of length at least $D(X)\slash 2$, and $\delta$ is a path of length between $D(X)$ and $2D(X)$ that starts at a point $v\in X^{(1)}$ and ends at $w\in X^{(1)}$. 
		
	First suppose that  $\delta$ contains a nontrivial subpath, $\delta'$, which contains exactly one point of $X^{(1)}$, $u$, in its interior. Let $e$ be the edge of $X$ containing $u$. Since $\delta'$ is geodesic, we see that $i(\delta')$ and $t(\delta')$ lie in two distinct faces $F,F'$, both adjacent to $e$. In $Lk(i(e))$, $F,F'$ are two edges adjacent to $e$, and so, as $G\backslash X$ is gluably $\pi$-separated. there exists a $\pi$-separated cutset $C\ni e$ and partition $P$ of $\pi_{0}(Lk(i(e))-C)$ with $F,F'$ lying in distinct elements of $P$. 
	
	Let $\Lambda$ be any hypergraph passing through $(C,P)$ in $Lk(i(e))$. By Lemma \ref{lem: separation condition} $\Lambda$ separates the endpoints of $\delta'$, and hence the endpoints of $\gamma$.
		
		Otherwise, $\delta$ is contained in $X^{(1)}$: as we have not subdivided $X$,  $v$ is a primary vertex of $X$. Let $\delta_{1}$, $\delta_{2}$ be the two subpaths of $\gamma$ incident to $v$: as $\gamma$ is geodesic, $d_{Lk(v)}(\delta_{1},\delta_{2})\geq \pi$. Let $C$ be the vertex cutset such that $\gamma_{1}$ and $\gamma_{2}$ lie in different components of $Lk(v)-C$ and let $P$ be a chosen partition of $\pi_{0}(Lk(v)-C)$ separating $\gamma_{1}$ and $\gamma_{2}$ (this exists as $G\backslash X$ is gluably vertex $\pi$-separated). Let $\Lambda$ be any vertex hypergraph passing through $(C,P)$ in $Lk(v)$: by Lemma \ref{lem: separation condition} this separates $\gamma_{1}$ and $\gamma_{2}$, and so separates the endpoints of $\gamma$.
	\end{proof}

 We now turn our attention to finding codimension-$1$ subgroups. We first note the following lemma concerning $CAT(0)$ geometry.
		
	\begin{lemma}\label{lem: geodesic rays diverge}
	Let $Y$ be a $CAT(0)$ space, and let $\gamma_{1},\gamma_{2}$ be infinite one-ended geodesics starting from the same point. If there exists $r>0$ such that $\gamma_{1}\subseteq \mathcal{N}_{r}(\gamma_{2})$, then $\gamma_{1}=\gamma_{2}.$
	\end{lemma}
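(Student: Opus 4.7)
The plan is to use the convexity of the distance function in $CAT(0)$ spaces. Parametrize the two geodesic rays by arclength, $\gamma_{1},\gamma_{2}:[0,\infty)\to Y$, with common basepoint $\gamma_{1}(0)=\gamma_{2}(0)$. By a standard property of $CAT(0)$ spaces (e.g.\ Bridson--Haefliger, II.2.2), the function $f(t):=d(\gamma_{1}(t),\gamma_{2}(t))$ is convex and non-negative, and $f(0)=0$.

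The next step is to turn the hypothesis $\gamma_{1}\subseteq\mathcal{N}_{r}(\gamma_{2})$ into a uniform bound on $f$. Fix $t\geq 0$; by assumption there is some $s\geq 0$ with $d(\gamma_{1}(t),\gamma_{2}(s))<r$. Since both rays emanate from the same point and are unit-speed, $d(\gamma_{1}(t),\gamma_{1}(0))=t$ and $d(\gamma_{2}(s),\gamma_{2}(0))=s$, so the triangle inequality gives $|t-s|<r$, and hence
\[
f(t)=d(\gamma_{1}(t),\gamma_{2}(t))\leq d(\gamma_{1}(t),\gamma_{2}(s))+d(\gamma_{2}(s),\gamma_{2}(t))<r+|s-t|<2r.
\]
Thus $f$ is uniformly bounded by $2r$.

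Finally I would invoke the elementary fact that a non-negative convex function $f:[0,\infty)\to\mathbb{R}$ with $f(0)=0$ that is bounded must vanish identically. This follows from convexity with $s=0$: for any $0<u<t$, writing $u=(1-\lambda)t$ with $\lambda=1-u/t$ yields $f(u)\leq (u/t)\,f(t)\leq 2ru/t$, and letting $t\to\infty$ gives $f(u)\leq 0$. Combined with $f\geq 0$ this forces $f\equiv 0$, so $\gamma_{1}(t)=\gamma_{2}(t)$ for every $t$, and hence $\gamma_{1}=\gamma_{2}$.

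This argument is essentially routine once one recalls convexity of the distance function; the only mildly delicate point is the conversion of the neighbourhood hypothesis into a pointwise bound on $f(t)$, which is where the common basepoint assumption is used to control the reparametrization gap $|s-t|$.
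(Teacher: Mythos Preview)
Your proof is correct and takes a genuinely different route from the paper's. The paper argues via the Alexandrov angle: it considers Euclidean comparison triangles with vertices $p$, $\gamma_{1}(t)$, and a nearby point $\gamma_{2}(t'(t))$, observes that the comparison angle at $p$ tends to $0$ as $t\to\infty$, and concludes that the angle $\angle_{p}(\gamma_{1},\gamma_{2})$ vanishes, whence the rays agree near $p$; a clopen argument then propagates this along the whole ray. Your approach instead invokes the convexity of $t\mapsto d(\gamma_{1}(t),\gamma_{2}(t))$ directly, converts the neighbourhood hypothesis into a uniform bound on this function (using the shared basepoint to control $|s-t|$), and then uses the elementary fact that a bounded nonnegative convex function vanishing at $0$ is identically zero. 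Your argument is shorter and avoids the angle machinery and the separate clopen step; the paper's argument, on the other hand, makes the geometric content (namely that the rays must issue in the same direction) more visible. Both are standard, but yours is the cleaner execution.
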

	\begin{proof}
	Let $p$ be the common start point of $\gamma_{1},\gamma_{2}$ and let $\theta=\angle_{p}(\gamma_{1},\gamma_{2}).$ Since $\gamma_{1}\subseteq \mathcal{N}_{r}(\gamma_{2})$, for all $t>0$ there exists $t'(t)>0$ such that $d(\gamma_{1}(t),\gamma_{2}(t'))\leq r$. However, $d(\gamma_{1}(t),p)\rightarrow \infty $ as $t\rightarrow\infty$, so that $d(\gamma_{2}(t'(t)),p)\rightarrow \infty $ as $t\rightarrow\infty$. Consider the Euclidean comparison triangle for the geodesics $\gamma_{1}(t)$ and $\gamma_{2}(t'(t))$: this has third side length at most $r$, and so has angle at $p$ of $\theta(t)\rightarrow 0$ as $t\rightarrow \infty.$ However, $\theta\leq \theta(t)$ for all $t$, and so $\theta=0$. It follows that $\gamma_{1}=\gamma_{2}$ in a closed neighbourhood of $p$, and so the set $\{t\;:\;\gamma_{1}(t)=\gamma_{2}(t)\}$ is clopen. The result follows.
	\end{proof}
Using this we can prove that hypergraph stabilizers have subgroups that are codimension-$1$ in $G$. Let $G$ be a group with finite generating set $S$ and let $\Gamma$ be the Cayley graph of $G$ with respect to $S$. A subgroup $H$ of $G$ is \emph{codimension-$1$} if the graph $H\backslash \Gamma$ has at least two ends, i.e. for some compact set $K$, $H\backslash\Gamma - K$ contains at least two infinite components. 
	\begin{lemma}\label{lem: hypergraph stabilizers are codimension-1}
	Let $G$ be a group acting properly discontinuously and cocompactly on a simply connected $CAT(0)$ polygonal complex $X$ such that $G\backslash X$ is (weakly) gluably $\pi$-separated. Let $\Lambda$ be a hypergraph in $X$. For any component $U_{\Lambda}$ of $X-\Lambda$, the group $$H_{U}=Stab_{Stab(\Lambda)}(U_{\Lambda})\cap Stab_{Stab(\Lambda)}(X-\overline{U_{\Lambda}})$$ is virtually free, and is quasi-isometrically embedded and codimension-$1$ in $G$.
	\end{lemma}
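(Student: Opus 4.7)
The plan is to verify the three claims --- virtual freeness, quasi-isometric embedding, and codimension-$1$ --- separately, in increasing order of difficulty. For virtual freeness, I will observe that by Lemma \ref{lem: edge hypergraphs are leafless convex} the hypergraph $\Lambda$ is a leafless convex tree, while the construction of Section \ref{subsection: Constructing hypergraphs in polygonal complexes} (and the explicit remark following it) ensures that $Stab(\Lambda)$ acts properly discontinuously and cocompactly on $\Lambda$. Hence $Stab(\Lambda)$ is virtually free by the standard theorem that groups acting properly and cocompactly on trees are virtually free, and therefore so is its subgroup $H_U$.

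For the quasi-isometric embedding, I first note that since $Stab(\Lambda)$ acts by homeomorphisms on $X$, stabilizing $U_\Lambda$ is equivalent to stabilizing $\overline{U_\Lambda}$, and therefore to stabilizing $X-\overline{U_\Lambda}$; hence $H_U=Stab_{Stab(\Lambda)}(U_\Lambda)=H_{\Lambda,i}$ in the notation of Lemma \ref{lem: wall stabilizers are cocompact}. By that lemma $H_U$ acts properly (inherited from $G$) and cocompactly on $\partial U_\Lambda$. Because $\partial U_\Lambda$ is a subgraph of the convex tree $\Lambda\subseteq X$, transitivity of convexity makes $\partial U_\Lambda$ a convex subspace of $X$. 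Applying the Milnor--\v{S}varc lemma to the inclusion $H_U\hookrightarrow G$ acting on the $H_U$-cocompact convex set $\partial U_\Lambda\subseteq X$ yields the claimed quasi-isometric embedding.

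Codimension-$1$ is the main obstacle. I plan to fix $x_0\in\partial U_\Lambda$ and use the orbit map $g\mapsto gx_0$, which is a quasi-isometry $G\to X$. Setting $A=\{g\in G\;:\;gx_0\in U_\Lambda\}$, which is $H_U$-invariant on the left, I will invoke the standard almost-invariant-set characterization of codimension-$1$ (c.f. Sageev, Niblo--Roller) and verify: (a) the ``boundary'' $\{g\;:\;gx_0\in\mathcal{N}_K(\partial U_\Lambda)\}$ (for $K$ the coarseness constant of the orbit map) is $H_U$-cofinite, which follows from properness of the $G$-action on $X$ combined with $H_U$-cocompactness on $\partial U_\Lambda$; and (b) both $A$ and $G-A$ contain infinitely many $H_U$-cosets. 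Condition (b) reduces to showing that $U_\Lambda$, and at least one other component $V$ of $X-\Lambda$ (which exists since every $\pi$-separated cutset contains at least two elements), each contain points arbitrarily far from $\Lambda$.

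The depth claim in (b) is the key geometric input, and is where Lemma \ref{lem: geodesic rays diverge} enters. Given $x\in U_\Lambda$ with $p=\pi_\Lambda(x)$, convexity of $\Lambda$ together with uniqueness of $CAT(0)$ geodesics forces any geodesic ray $\gamma$ from $p$ through $x$ to remain in $U_\Lambda$ past $p$ (a re-entry of $\gamma$ into $\Lambda$ would, by convexity, contradict uniqueness of geodesics in $X$). If $U_\Lambda$ were contained in $\mathcal{N}_R(\Lambda)$ for some $R>0$, the projection $\pi_\Lambda\circ\gamma$ would escape to an end of the tree $\Lambda$ and hence eventually lie along a geodesic ray $\delta\subseteq\Lambda$ starting at $p$, with $\gamma$ at bounded distance from $\delta$; Lemma \ref{lem: geodesic rays diverge} would then force $\gamma=\delta\subseteq\Lambda$, a contradiction. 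The identical argument applies to $V$, and combined with cocompactness of the $G$-action on $X$ this yields infinitely many $H_U$-orbits of translates of $x_0$ in both $U_\Lambda$ and $V$, completing the proof of codimension-$1$.
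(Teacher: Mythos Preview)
Your argument follows the same overall arc as the paper's: use Lemma~\ref{lem: edge hypergraphs are leafless convex} and Lemma~\ref{lem: wall stabilizers are cocompact} to get that $H_U$ acts properly and cocompactly on the convex subtree $\partial U_\Lambda$ (hence is virtually free and quasi-isometrically embedded), and then exhibit geodesic rays in each complementary component escaping from $\Lambda$ via Lemma~\ref{lem: geodesic rays diverge}. Your observation that stabilizing $U_\Lambda$ automatically forces stabilizing $X-\overline{U_\Lambda}$, so that $H_U=Stab_{Stab(\Lambda)}(U_\Lambda)$, is in fact cleaner than the paper's ``index at most $2$''. Likewise, your almost-invariant-set packaging is equivalent to the paper's direct verification that $H_U\backslash X$ has at least two ends.

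There is one genuine soft spot. You assert the existence of a geodesic \emph{ray} from $p=\pi_\Lambda(x)$ through $x$, but $CAT(0)$ polygonal complexes are not geodesically complete in general; this has to be extracted from the hypotheses. The paper handles this by building the ray combinatorially: starting at a vertex $v\in\Lambda$, it picks an edge $e_1$ in the desired component of $Lk(v)-C$, and at each subsequent vertex $w$ uses that $Lk(w)$ is connected, has no degree-$1$ vertices, and has girth $\geq 2\pi$ to find a direction at angular distance $\geq\pi$ from the incoming one. You should either reproduce that construction or argue directly that these link conditions force geodesic completeness of $X$ (they do: no degree-$1$ link vertices rules out free faces, and the girth bound lets you extend through vertices). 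A smaller quibble: the existence of a second component $V$ of $X-\Lambda$ comes from Lemma~\ref{lem: separation condition} and the fact that each chosen partition has at least two parts, not from $|C|\geq 2$ (which is what makes $\Lambda$ leafless). Finally, both your argument and the paper's are a touch informal in applying Lemma~\ref{lem: geodesic rays diverge} with the tree $\Lambda$ in place of a single ray; the missing step --- projecting $\gamma$ to $\Lambda$ and using the tree structure to produce a single ray $\delta\subseteq\Lambda$ from $p$ with $\gamma\subseteq\mathcal{N}_{2R}(\delta)$ --- is routine.
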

This again follows by \cite[Theorem 2.9]{Hruska-Wise}: we provide a direct proof for completeness.
	\begin{proof}
	We prove this in the case that $\Lambda$ is an edge hypergraph: the case for vertex hypergraphs is identical. By Lemma \ref{lem: edge hypergraphs are leafless convex}, $\Lambda$ is a convex tree. Since $\partial U_{\Lambda}\subseteq \Lambda$, $\partial U_{\Lambda}$ is a convex tree. We see that $H_{U}$ is of index at most $2$ in $Stab_{Stab(\Lambda)}(U_{\Lambda})$. By Lemma \ref{lem: wall stabilizers are cocompact}, $H_{U}$ acts properly discontinuously and cocompactly on $\partial U_{\Lambda}$: it follows that $H_{U}$ is virtually free and quasi-isometrically embedded in $G$. Furthermore, by Lemma \ref{lem:  separation condition} $X- \Lambda$ consists of at least two path-connected components, $\{U^{i}_{\Lambda}\}$. Let $V_{\Lambda}=X-\overline{U_{\Lambda}}$.
		
		Let $e_{1}$ and $e_{2}$ be vertices that lie in distinct components of $Lk(v)-C$ such that, in $X$, $e_{1}$ is an edge lying in $U_{\Lambda}\cup v$ and $e_{2}$ an edge lying in $V_{\Lambda}\cup v$. We construct two geodesics $\gamma_{1}$ and $\gamma_{2}$: let the first edge of $\gamma_{1}$ be $e_{1}$, and let $w$ be the endpoint of $e_{1}$ distinct from $v$. Since the links of vertices have no vertices of degree $1$ and have girth at least $2\pi$, it follows that there exists a vertex or edge, $a_{1}$, in $\Gamma=Lk(w)$ so that $d_{\Gamma}(e _{1}, m(a_{1}))\geq \pi$, and so we can extend $e_{1}$  to a geodesic $[v,m(a_{1})]$. We can continue in this fashion to construct a one-ended geodesic $\gamma_{1}$ that, by Lemma \ref{lem: separation condition}, lies in $U_{\Lambda}\cup v$ and (as geodesics are unique in $X$) intersects $\Lambda$ exactly once. Construct the geodesic $\gamma_{2}$ similarly, with first edge $e_{2}$ so that $\gamma_{2}$ intersects $\Lambda$ exactly once and lies in $V_{\Lambda}\cup v$. 
		
		By Lemma \ref{lem: geodesic rays diverge}, it follows that for any $r>0$, $\gamma_{1},\gamma_{2}\not\subseteq \mathcal{N}_{r}(\partial U_{\Lambda})$. Therefore $H_{U}\backslash X-H_{U}\backslash \partial U_{\Lambda}$ consists of at least two infinite components: $H_{U}\backslash U_{\Lambda}$ and $H_{U}\backslash V_{\Lambda}$. As $G$ is quasi-isometric to $X$, and $H_{U}$ is quasi-isometric to $\partial U_{\Lambda}$, the result follows.
	\end{proof}

	We will use Hruska--Wise's \cite{Hruska-Wise} generalisation of Sageev's construction of a $CAT(0)$ cube complex dual to a collection of codimension-$1$ subgroups, as introduced in \cite{Sageev-95}. We will only describe the $1$-skeleton of this cube complex. 
	
\begin{definition}[\emph{Orientation}]
 Let $(X,\mathcal{W})$ be a wallspace and $W=\{U,V\}$ a wall. An \emph{orientation} of $W$ is a choice $c(W)=(\overleftarrow{c(W)},\overrightarrow{c(W)})$ of ordering of the pair $W$. An \emph{orientation} of $\mathcal{W}$ is an orientation of each wall $W$ in $\mathcal{W}$.
\end{definition}
	
A $0$-cube in the dual cube complex $\mathcal{C}(X,\mathcal{W})$ corresponds to a choice of orientation $c$ of $\mathcal{W}$ such that that for any element $x\in X$, $x$ lies in $\overleftarrow{c(W)}$ for all but finitely many $W\in\mathcal{W}$, and $\overleftarrow{c(W)}\cap\overleftarrow{c(W')}\neq \emptyset$ for all $W,W'\in\mathcal{W}$. Two $0$-cells are joined by a $1$-cell if there exists a unique wall to which they assign opposite orientations.

Sageev analysed the properness and cocompactness of the group action on $\mathcal{C}(X,\mathcal{W})$ in \cite{Sageev-97}, and this was generalized by Hruska--Wise in \cite{Hruska-Wise}. We will use the following, as they are the easiest criteria to verify in our setting.

\begin{theorem*}\cite[Theorem 1.4]{Hruska-Wise}\label{prop: proper actions}
  Suppose $G$ acts on a wallspace $(X,\mathcal{W})$, and the action on the underlying metric space (X,d) is metrically proper. If there exists constants $\kappa,\epsilon>0$ such that for any $x,y\in X$, 
  $$\#(x,y)\geq \kappa d(x,y)-\epsilon,$$
  then $G$ acts metrically properly on $C(X, \mathcal{W})$.
\end{theorem*}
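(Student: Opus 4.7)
The plan is the standard Sageev--Hruska--Wise argument: transfer metric properness from $(X,d)$ to $\mathcal{C}(X,\mathcal{W})$ by identifying the combinatorial distance between 0-cubes with the wall-separation count $\#(\cdot,\cdot)$. First, fix a basepoint $x_0 \in X$ and define a canonical 0-cube $c_{x_0}$ by orienting each wall $W=\{U,V\} \in \mathcal{W}$ so that $\overleftarrow{c_{x_0}(W)}$ is a halfspace whose closure contains $x_0$ (making an arbitrary fixed choice when $x_0$ is betwixted by $W$). The wallspace axioms ensure that only finitely many walls betwixt $x_0$ and that any two such chosen halfspaces meet (both contain $x_0$), so $c_{x_0}$ genuinely is a 0-cube. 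The group action on 0-cubes is then given by $g\cdot c(W) := c(g^{-1}W)$, and $g \cdot c_{x_0}$ is the canonical 0-cube associated to $gx_0$ up to the choices made at betwixting walls.

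Next, I would identify the combinatorial distance. In $\mathcal{C}(X,\mathcal{W})$, two 0-cubes $c,c'$ are joined by an edge precisely when they disagree on a single wall, so $d_{\mathcal{C}}(c,c')$ equals the number of walls on which $c$ and $c'$ orient oppositely (this is the standard fact that 0-cubes in a Sageev cube complex live in a ``median'' configuration determined by wall orientations). Applied to $c_{x_0}$ and $g \cdot c_{x_0}$, the walls of disagreement are exactly those whose two halfspaces separate $x_0$ from $gx_0$, giving
\[
 d_{\mathcal{C}}(c_{x_0},\, g\cdot c_{x_0}) \;=\; \#(x_0, gx_0).
\]

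Finally, I would assemble the pieces. For any $R>0$, suppose $d_{\mathcal{C}}(c_{x_0}, g\cdot c_{x_0}) \leq R$. The identification above yields $\#(x_0,gx_0)\leq R$, and the linear lower bound in the hypothesis gives $d(x_0,gx_0)\leq (R+\epsilon)/\kappa$. Since $G\curvearrowright (X,d)$ is metrically proper, only finitely many $g\in G$ satisfy this bound, so the orbit $g\mapsto g\cdot c_{x_0}$ is a metrically proper $G$-map into $\mathcal{C}(X,\mathcal{W})$. The main obstacle is the careful verification that $c_{x_0}$ is a valid 0-cube, together with the identification of $d_{\mathcal{C}}$ with the wall-disagreement count; once these standard facts (originating in \cite{Sageev-95} and generalised in \cite{Hruska-Wise}) are in place, the rest of the argument is a short computation with the hypothesised linear inequality.
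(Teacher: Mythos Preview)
The paper does not prove this statement: it is quoted verbatim as \cite[Theorem 1.4]{Hruska-Wise} and used as a black box in the proof of Theorem~\ref{mainthm: cubulating groups}. There is therefore no ``paper's own proof'' to compare against.

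Your sketch is the standard Sageev argument and is essentially correct, with one small imprecision worth flagging. The equality $d_{\mathcal{C}}(c_{x_0}, g\cdot c_{x_0}) = \#(x_0, gx_0)$ need not hold on the nose: walls that betwixt $x_0$ or $gx_0$ may contribute to the disagreement count without separating the two points (or vice versa, depending on the arbitrary choices made at betwixted points). However, by the wallspace axioms only finitely many walls betwixt $x_0$, and by $G$-equivariance the same number betwixt $gx_0$, so the discrepancy is bounded by a constant independent of $g$. This bounded error is harmless for the properness conclusion, and your final paragraph goes through unchanged once you replace the equality by an inequality up to an additive constant.
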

\begin{theorem*}\cite[Lemma 7.2]{Hruska-Wise}\label{thm: Sageev cubulation cocompactness}
Let $G$ act on a wallspace (X,W). Suppose there are finitely many orbits of collections of pairwise transverse walls in $X$. Then $G$ acts cocompactly on $C(X,\mathcal{W})$.
\end{theorem*}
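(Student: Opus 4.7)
The plan is to bound the number of $G$-orbits of cubes in $C(X,\mathcal{W})$ using the hypothesis on transverse wall collections. The argument splits naturally into a dimension bound and a count of orbits in each fixed dimension.

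First, I would observe that an $n$-cube of $C(X,\mathcal{W})$ determines a set of $n$ pairwise transverse walls (the walls dual to the $n$ pairs of parallel edges of the cube). Since by hypothesis there are only finitely many $G$-orbits of collections of pairwise transverse walls, there is a uniform upper bound $N$ on the size of any such collection, and in particular $\dim C(X,\mathcal{W})\leq N$. It therefore suffices to bound the number of $G$-orbits of $n$-cubes for each $n\leq N$.

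Next, I would encode each $n$-cube as a pair $(\mathcal{T},v)$, where $\mathcal{T}$ is an $n$-element collection of pairwise transverse walls and $v$ is a $0$-cube realising one vertex of the cube; equivalently, $v$ is an orientation of $\mathcal{W}\setminus\mathcal{T}$ satisfying the canonical axioms. By hypothesis, $\mathcal{T}$ ranges in finitely many $G$-orbits, so it remains to show that for each representative $\mathcal{T}$, the setwise stabilizer $Stab_G(\mathcal{T})$ acts with finitely many orbits on the set of $0$-cubes that occur as vertices of cubes with directions $\mathcal{T}$. Any two such $0$-cubes agree in their orientations outside $\mathcal{T}$, so this reduces to counting orbits of ``regions'' in $X$ cut out by $\mathcal{W}\setminus\mathcal{T}$. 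Applying the hypothesis to singleton collections shows that $G$ has finitely many orbits on walls, and hence, by the wallspace axioms (finitely many walls betwixt a point, finite $\#(x,y)$), only finitely many orbits of local patterns of orientation near any given principal $0$-cube.

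The main obstacle is this last step: translating the combinatorial finiteness of wall orbits into cofiniteness of the $Stab_G(\mathcal{T})$-action on the relevant orientation classes. The subtle point is that an abstract $0$-cube need not arise directly from a point of $X$, so one must work with the \emph{principal} orientations induced by points of $X$ and then use the cubical structure to reach all other $0$-cubes within bounded combinatorial distance, much as in Sageev's original argument \cite{Sageev-97}. Combined with the dimension bound above, this yields finitely many $G$-orbits of cubes in total, and hence cocompactness of the action on $C(X,\mathcal{W})$; the full verification of these orbit counts is carried out by Hruska--Wise in \cite{Hruska-Wise}.
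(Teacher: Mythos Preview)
The paper does not prove this statement at all: it is quoted verbatim as \cite[Lemma 7.2]{Hruska-Wise} and used as a black box in the proof of Theorem~\ref{mainthm: cubulating groups}. There is therefore no ``paper's own proof'' to compare your proposal against.

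That said, your outline is broadly the right shape for how the Hruska--Wise argument actually goes: one associates to each $n$-cube its set of $n$ pairwise transverse walls, uses the hypothesis to bound the dimension and the number of orbits of such wall-collections, and then argues that the stabilizer of a given collection acts cofinitely on the cubes dual to it. Your last paragraph correctly identifies the genuinely delicate step --- passing from finiteness of wall orbits to finiteness of orientation-class orbits via canonical (principal) $0$-cubes --- and you are right to defer that to \cite{Hruska-Wise}, since your sketch does not actually carry it out. As written, then, your proposal is an honest outline that ultimately cites the same source the paper does, rather than an independent proof.
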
	

	This is sufficient to prove Theorem \ref{mainthm: cubulating groups}.
	
	\begin{proof}[Proof of Theorem \ref{mainthm: cubulating groups}]
	If $G\backslash X$ is gluably weakly $\pi$-separated, then by Lemma \ref{lem: hypergraph stabilizers are codimension-1}, $G$ contains a virtually free codimension-$1$ subgroup.
		
		Now suppose $G\backslash X$ is a gluably $\pi$-separated complex. $X$ is locally finite, and $G$ acts properly discontinuously on $X$, so acts metrically properly on $X$. Construct the hypergraph wallspace for $X$. Then by Lemmas \ref{lem: edge hypergraph separation} and \ref{lem: vertex hypergraph separation}, $$\#_{\Lambda}(p,q)\geq d_{X}(p,q)\slash 4D(X) -1.$$	
By Lemma \ref{lem: wallspace separation from hypergraph separation}, this implies that $$\#(p,q)\geq d_{X}(p,q)\slash 4D(X)-1:$$
by \cite[Theorem 1.4]{Hruska-Wise} it follows that $G$ acts properly discontinuously on the cube complex $C(X,\mathcal{W})$.

		Now suppose that $G$ is hyperbolic, so that $X$ is also hyperbolic. As hypergraphs are convex and hypergraph stabilisers are cocompact, by \cite{Gitik-Mitra-Rips-Sageev_1998widths} (c.f. \cite{Sageev-97}) there is an upper bound on the number of pairwise intersecting hypergraphs. For any point $x\in \Lambda$ there is a finite upper bound on the number of components of $X-\Lambda$ intersecting $x$, and so by Lemma \ref{lem: finitely many wall orbits}, we see there is an upper bound on the size of a collection of pairwise transverse walls. As $G$ acts cofinitely on the set of walls, it follows that the hypothesis of \cite[Lemma 7.2]{Haglund-Wise} are met, and so 
		$G$ acts cocompactly on the $CAT(0)$ cube complex $C(X,\mathcal{W})$:
	by \cite[Theorem 1.1]{Agol13}, we conclude that $G$ is virtually special. 
	\end{proof}

%----------------------------------------------------------------------------------------
%   Finding separated cutsets by computer search
%---------------------------------------------------------------------------------------
\section{Finding separated cutsets by computer search}\label{section: finding cutsets}
	In this short section, we discuss how to find separated cutsets by computer search. Let $\Gamma$ be a finite metric graph, and let $I(\Gamma)=V(\Gamma)$ or $E(\Gamma)$. 
	Define $d_{I}(x,y)=d_{\Gamma}(x,y)$ if $x,y\in V(\Gamma)$ and $d_{I}(x,y)=d_{\Gamma}(m(x),m(y))$ if $x,y\in E(\Gamma)$.
	Let $\sigma>0$. The $\sigma$-separated cutsets of $\Gamma$ that lie in $I(\Gamma)$ can be found in the following way: we can define a dual graph $\bar{\Gamma}$ by $V(\bar{\Gamma})=I(\Gamma),$ and 
		$$E(\bar{\Gamma})=\{(x,y)\in I(\Gamma)^{2}\;:\;x\neq y\;\mbox{and}\;d_{I}(x,y)< \sigma \}.$$
		Finding $\sigma$-separated cut sets in $\Gamma$ then corresponds to finding independent vertex sets in $\bar{\Gamma}$ and checking if they are cut sets in $\Gamma$. Importantly, finding independent vertex sets can be done relatively efficiently.
		
		See \href{ https://github.com/CJAshcroft/Graph-Cut-Set-Finder}{\color{blue} https://github.com/CJAshcroft/Graph-Cut-Set-Finder} for the implementation of the above algorithm, and for the code used to find cutsets in the following sections.

%----------------------------------------------------------------------------------------
	%   Application to triangular buildings
	%----------------------------------------------------------------------------------------
	\section{Triangular buildings}\label{section: generalized quadrangles}
	
		In the following section, we prove Corollary \ref{mainthm: cubulating the generalized quadrangle}.
		In \cite{Kangaslampi-Vdovina} and \cite{Carbone-Kangaslampli-Vdovina_2012} all groups acting simply transitively on triangular buildings whose links are the minimal generalized quadrangle (see Figure \ref{fig: generalized quadrangle}) were classified. We apply Theorem \ref{mainthm: cubulating groups} to these groups, proving they are virtually special by considering the separation of the minimal generalized quadrangle.

	\begin{figure}[ht]
		\centering
	\includegraphics[scale=0.75]{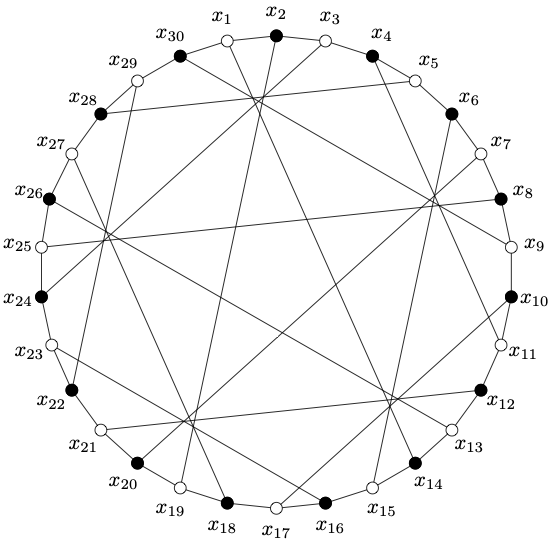}
		\caption{The minimal generalized quadrangle.}\label{fig: generalized quadrangle}
	\end{figure}

	\begin{table}[H]
    \begin{tabular}{c|ccl}
  $x_{i}$  &\multicolumn{3}{c}{$x_{j}$ adjacent to $x_{i}$}\\
    \hline

1&2& 14& 30\\
2&1& 3& 19\\
3&2& 4& 24\\
4&3& 5& 11\\
5&4& 6& 28\\
6&5& 7& 15\\
7&6& 8& 20\\
8&7& 9& 25\\
9&8& 10& 30\\
10&9& 11& 17
\end{tabular}
    \begin{tabular}{c|ccl}
  $x_{i}$  &\multicolumn{3}{c}{$x_{j}$ adjacent to $x_{i}$}\\
    \hline
11&4&10& 12\\
12&11& 13& 21\\
13&12& 14& 26\\
14&1& 13& 15\\
15&6& 14& 16\\
16&15& 17& 23\\
17&10& 16& 18\\
18&17& 19& 27\\
19&2& 18& 20\\
20&7& 19& 21
\end{tabular}
    \begin{tabular}{c|ccl}
  $x_{i}$  &\multicolumn{3}{c}{$x_{j}$ adjacent to $x_{i}$}\\
    \hline

21&12& 20& 22\\
22&21& 23& 29\\
23&16& 22& 24\\
24&3& 23& 25\\
25&8& 24& 26\\
26&13&25&27\\
27&18&26&28\\
28&5&27&29\\
29&22&28&30\\
30&1&9&29
\end{tabular}
    \caption{Edge incidences for the minimal generalized quadrangle}

\end{table}
	\begin{lemma}\label{lem: separation of the generalized quadrangle}
		Let $\Gamma$ be the minimal generalized quadrangle equipped with the combinatorial metric. Then $\Gamma$ is weighted (strongly) edge $3$-separated.  
	\end{lemma}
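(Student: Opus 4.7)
The plan is to exhibit an explicit family of proper $3$-separated edge cutsets of $\Gamma$ whose union is $E(\Gamma)$ and each of cardinality at least $2$, and then upgrade this to the strongly edge $3$-separated and weighted statements using the general results of Sections~\ref{subsec: Link conditions} and \ref{subsection: Examples of separated graphs}. In particular, I would rely on the computer search of Section~\ref{section: finding cutsets} rather than attempt a purely hand-crafted construction: on $30$ vertices of degree $3$ there are simply too many candidate edge sets to inspect directly, but the dual-graph/independent-set reformulation makes the search tractable.

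First, I would apply the algorithm of Section~\ref{section: finding cutsets} with $I(\Gamma) = E(\Gamma)$ and $\sigma = 3$ to the edge list given by the table above. Since $d_{E}(e_1,e_2) \geq 3$ is equivalent to the endpoints of $e_1$ and $e_2$ being pairwise at graph distance at least $2$, each independent set in the dual graph $\bar\Gamma$ is a candidate cutset; for each such candidate I would compute $\pi_0(\Gamma - C)$ from the adjacency table and retain only those $C$ that disconnect $\Gamma$ with $|C| \geq 2$. Minimal cutsets found this way are automatically proper, since removing any edge of a minimal cutset keeps the graph connected, which forces the two endpoints of that edge to lie in distinct components of $\Gamma - C$. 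This yields a family $\mathcal{C} = \{C_1,\dots,C_m\}$ whose union, after iterating, covers every edge of $\Gamma$; to confirm the covering property it suffices, by the edge-transitivity of $\mathrm{Aut}(\Gamma) \cong S_6$, to produce at least one such cutset containing a single representative edge and then transport it around.

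Second, to promote edge $3$-separation to \emph{strong} edge $3$-separation, I would invoke Lemma~\ref{lem: strong edge sep condition} with $n = 3$. The minimal generalized quadrangle is the incidence graph of $W(2)$, which has girth $8 \geq 6$, so the hypothesis on the girth is satisfied. It therefore remains to verify the combinatorial condition of that lemma: for every pair of vertices $u, v$ with $d_{\Gamma}(u,v) \geq 3$ and every pair of neighbors $u',v'$ of $u, v$, there is a cutset in $\mathcal{C}$ separating $\{u,u'\}$ from $\{v,v'\}$. This is a finite check on the table of adjacencies: enumerate the pairs, and for each pair test whether some $C_i \in \mathcal{C}$ already exhibited puts the two doubletons into distinct components of $\Gamma - C_i$. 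If some pair fails, enlarge $\mathcal{C}$ by running the independent-set search with that pair marked as required to be separated.

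Finally, the weighted statement is immediate from Lemma~\ref{lem: edge transitive automorphism group can solve equations}: $\mathrm{Aut}(\Gamma) \cong S_{6}$ acts edge-transitively on the incidence graph of $W(2)$, and any (strongly) edge $3$-separated graph with edge-transitive automorphism group is automatically weighted (strongly) edge $3$-separated by the averaging argument in the proof of that lemma. The main obstacle is the first step, namely producing cutsets that are simultaneously $3$-separated and genuinely disconnecting: the edge-connectivity of $\Gamma$ is $3$ and $3$-separation forces the edges of $C_i$ to be pairwise at distance at least $2$ between their closest endpoints, so cutsets necessarily interact tightly with the girth-$8$ combinatorics of $W(2)$, and without the computer search their existence is not a priori obvious.
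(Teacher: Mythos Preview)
Your proposal is correct and follows essentially the same route as the paper: a computer search produces an explicit family of proper $3$-separated edge cutsets covering $E(\Gamma)$, and Lemma~\ref{lem: strong edge sep condition} (with $n=3$, using $\mathrm{girth}(\Gamma)=8$) upgrades this to strong edge $3$-separation via a finite check.

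The one genuine point of divergence is how you obtain the \emph{weighted} conclusion. You invoke Lemma~\ref{lem: edge transitive automorphism group can solve equations} via edge-transitivity of $\mathrm{Aut}(\Gamma)$; this is valid, but note that the averaging construction in that lemma may enlarge the cutset family. The paper instead exploits a concrete combinatorial accident visible once the exhaustive list of ten $3$-separated cutsets is in hand: every edge of $\Gamma$ lies in \emph{exactly two} of them, so assigning weight $1$ to each cutset solves the weight equations directly without any appeal to symmetry or any enlargement of $\mathcal{C}$. Your approach is more portable (it would work for any edge-transitive link), while the paper's is sharper for this specific graph and yields an explicit minimal solution.
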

	
	\begin{proof}
	By a computer search, we find the following exhaustive list of $3$-separated edge cut sets in $\Gamma$:
		\begin{enumerate}[label=\hspace*{-10pt}]
	{
			\item\hspace*{-10pt}$C_{1}=\{\fe{1}{2},\fe{4}{5},\fe{7}{20},\fe{9}{10},\fe{12}{13},\fe{15}{16},\fe{18}{27},\\\hspace*{25pt}\fe{22}{29},\fe{24}{25}\},$
			\item\hspace*{-10pt}$C_{2}=\{\fe{1}{2},\fe{4}{11},\fe{6}{15},\fe{8}{9},\fe{13}{26},\fe{17}{18},\fe{20}{21},\\\hspace*{25pt}\fe{23}{24},\fe{28}{29}\},$
			\item\hspace*{-10pt}$C_{3}=\{\fe{1}{14},\fe{3}{4},\fe{6}{7},\fe{9}{10},\fe{12}{21},\fe{16}{23},\fe{18}{19},\\\hspace*{25pt}\fe{25}{26},\fe{28}{29}\},$
			\item\hspace*{-10pt}$C_{4}=\{\fe{1}{14},\fe{3}{24},\fe{6}{7},\fe{9}{10},\fe{12}{21},\fe{16}{23},\fe{18}{19},\\\hspace*{25pt}\fe{25}{26},\fe{28}{29}\},$
			\item\hspace*{-10pt}$C_{5}=\{\fe{1}{30},\fe{3}{4},\fe{6}{15},\fe{8}{25},\fe{10}{17},\fe{12}{13},\fe{19}{20},\\\hspace*{25pt}\fe{22}{23},\fe{27}{28}\},$
			\item\hspace*{-10pt}$C_{6}=\{\fe{1}{30},\fe{3}{24},\fe{5}{28},\fe{7}{8},\fe{10}{11},\fe{13}{26},\fe{15}{16},\\\hspace*{25pt}\fe{18}{19},\fe{21}{22}\},$
			\item\hspace*{-10pt}$C_{7}=\{\fe{2}{3},\fe{5}{6},\fe{8}{25},\fe{10}{11},\fe{13}{14},\fe{16}{23},\fe{18}{27},\\\hspace*{25pt}\fe{20}{21},\fe{29}{30}\},$
			\item\hspace*{-10pt}$C_{8}=\{\fe{2}{3},\fe{5}{28},\fe{7}{20},\fe{9}{30},\fe{11}{12},\fe{14}{15},\fe{17}{18},\\\hspace*{25pt}\fe{22}{23},\fe{25}{26}\},$
			\item\hspace*{-10pt}$C_{9}=\{\fe{2}{19},\fe{4}{5},\fe{7}{8},\fe{10}{17},\fe{12}{21},\fe{14}{15},\fe{23}{24},\\\hspace*{25pt}\fe{26}{27},\fe{29}{30}\},$
			\item\hspace*{-10pt}$C_{10}=\{\fe{2}{19},\fe{4}{11},\fe{6}{7},\fe{9}{30},\fe{13}{14},\fe{16}{17},\fe{21}{22},\\\hspace*{25pt}\fe{24}{25},\fe{27}{28}\}.$
			}
		\end{enumerate}
	
	$\Gamma$ is connected and contains no vertices of degree $1$. The cutsets sets are $3$-separated, and $\cup_{i}C_{i}=E(\Gamma)$. In fact, each cutset is minimal, and so is certainly proper. Furthermore, every edge appears in exactly two cutsets: assigning each cutset weight $1$ we see that the weight equations are satisfied, and so $\Gamma$ is weighted edge $3$-separated.
	
	In fact, by a computer search we can see that $\Gamma$ satisfies the conditions of Lemma \ref{lem: strong edge sep condition}, and so is weighted strongly edge $3$-separated.
	\end{proof}
	Note that $C_{i}\cap C_{j}$ is nonempty for all $i$ and $j$, so that we are not able to use \cite[Example $4.3$]{Hruska-Wise}. However, we can apply Theorem \ref{mainthm: cubulating groups} to prove groups acting properly discontinuously and cocompactly on triangular buildings with the minimal generalized quadrangle as links are virtually special.

	\begin{proof}[Proof of Corollary \ref{mainthm: cubulating the generalized quadrangle}]
	Let $X$ be a simply connected polygonal complex such that every face has at least $3$ sides, and the link of every vertex is isomorphic to the minimal generalized quadrangle, $\Gamma$, and let $G$ be a group acting properly discontinuously and cocompactly on $X$. Since $\Gamma$ has girth $8$, $X$ can be endowed with a $CAT(-1)$ metric, so that $G$ is hyperbolic. Endow $X$ with the metric that makes each $k$-gonal face a regular unit Euclidean $k$-gon, so that $X$ is regular and the length of each edge in the link of a vertex is at least $\pi\slash 3$. As $\Gamma$ is weighted edge $3$-separated with the combinatorial metric, it follows that the links of $X$ are weighted edge $\pi$-separated. Hence by Lemma \ref{lem: solving gluing equations for minimal edge cutsets}, $G\backslash X$ is gluably $\pi$-separated. Furthermore, by Gromov's link condition, $X$ is $CAT(0)$.  Therefore, $G$ is hyperbolic and acts properly discontinuously and cocompactly on a simply connected  $CAT(0)$ triangular complex $X$ with $G\backslash X$ gluably $\pi$-separated, so acts properly discontinuously and cocompactly on a $CAT(0)$ cube complex by Theorem \ref{mainthm: cubulating groups}, and hence is virtually special by \cite[Theorem $1.1$]{Agol13}.
	\end{proof}

	%----------------------------------------------------------------------------------------
	%   Application to generalized ordinary triangular groups
	%----------------------------------------------------------------------------------------
	\section{Application to generalized triangular groups}\label{sec: gen triangles}
In this section we prove Theorem \ref{mainthm: cubulating generalized triangle groups} in Section \ref{subsection: Codimension-$1$ subgroups of generalized ordinary triangle groups}, Corollary \ref{coralph: small girth generalized triangle groups} in Section \ref{subsection: small girth generalized triangle groups}, and Corollary \ref{mainthm: cubulating dehn fillings of generalized triangle groups} in Section \ref{subsection: cubulating dehn fillings of generalized triangle groups}.	
%----------------------------------------------------------------------------------------
%   SUBSECTION: Cubulating TRIANGLE GROUPS
%----------------------------------------------------------------------------------------
		\subsection{Cubulating generalized ordinary triangle groups}\label{subsection: Codimension-$1$ subgroups of generalized ordinary triangle groups}
		We now consider generalized ordinary triangle groups, constructed in \cite{Lubotzky-Manning-Wilton} to answer a question of Agol and Wise: note that the case of $k=2$ corresponds to classical ordinary triangle groups.
		
		The first complex of groups we define uses the notation from \cite{Caprace-Conder-Kaluba-Witzel_triangle} to more easily align with their work. See e.g. \cite{Bridson-Haefliger} for further discussion of complexes of groups.
\begin{definition}[Generalized triangle groups]\label{def: generalized triangle}
        Consider the following complex of groups over $\mathcal{T}$, the poset of all subsets of $\{1,2,3\}$. Let $X_{1},X_{2},X_{3}$ be the vertex groups, and $A_{1},A_{2},A_{3}$ the edge groups, with the face group trivial, and homomorphisms $\phi_{i,i+1}:A_{i}\rightarrow X_{i+1},\;\phi_{i,i-1}:A_{i}\rightarrow A_{i-1}$ for $i=1,2,3$ taken $\mod 3$. Now, consider the coset graph $$\Gamma_{X_{i}}(\phi_{i-1,i}(A_{i-1}),\phi_{i+1,i}A_{i+1}).$$
Fix $k\geq 2$ and let each $A_{i}=\mathbb{Z}\slash k$. For graphs $\Gamma_{i}$, let $$\{D_{k}^{j}(\Gamma_{1},\Gamma_{2},\Gamma_{3})\}_{j}$$ be the family of complexes of groups obtained by choosing $X_{i}$ and $\phi_{i,i\pm 1}$ such that for each $i$ $$\Gamma_{X_{i}}(\phi_{i-1,i}(A_{i-1}),\phi_{i+1,i}A_{i+1})\cong \Gamma_{i}.$$ A group $$G^{j}_{k}(\Gamma_{1},\Gamma_{2},\Gamma_{3})=\pi_{1}(D^{j}_{k}(\Gamma_{1},\Gamma_{2},\Gamma_{3}))$$ is called a \emph{($k$-fold) generalized triangle group}.
\end{definition}	
Bridson and Haefliger considered the developability of a complex of groups in \cite[III.$\mathcal{C}$]{Bridson-Haefliger}. The following is well known: see e.g. \cite[Theorem 3.1]{Caprace-Conder-Kaluba-Witzel_triangle}.
\begin{prop}\label{lem: developability of gen triangle}
Suppose that $girth(\Gamma_{i})\geq 6$ for each $i$. Then $G_{k}^{j}(\Gamma_{1},\Gamma_{2},\Gamma_{3})$ acts properly and cocompactly on a triangular complex $X^{j}(\Gamma_{1},\Gamma_{2},\Gamma_{3})$ such that the link of each vertex is isomorphic to $\Gamma\in\{\Gamma_{i}\}_{i}$. If $girth(\Gamma_{1})>6$, then $G$ is hyperbolic.
\end{prop}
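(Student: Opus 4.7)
The plan is to verify the developability criterion for complexes of groups from \cite[Chapter III.$\mathcal{C}$]{Bridson-Haefliger}, identify the link of each vertex in the universal development with the prescribed coset graph, and then promote the combinatorial girth hypothesis to a metric curvature bound.

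Metrize the underlying triangle $\mathcal{T}$ as a Euclidean equilateral triangle, so every vertex angle equals $\pi/3$. By Bridson--Haefliger's theorem \cite[Theorem III.$\mathcal{C}$.4.17]{Bridson-Haefliger}, the complex of groups $D^j_k(\Gamma_1,\Gamma_2,\Gamma_3)$ is developable as soon as each of its local developments is non-positively curved, and by Gromov's link condition this reduces to the requirement that the angular link of each vertex have systole at least $2\pi$. With vertex angles $\pi/3$, a combinatorial loop of length $n$ in a vertex link has angular length $n\pi/3$, so this becomes exactly: the combinatorial link at the vertex labelled $i$ has girth at least $6$. The standard description of local developments (see \cite[Chapter III.$\mathcal{C}$]{Bridson-Haefliger}) identifies this link with the coset graph $\Gamma_{X_i}(\phi_{i-1,i}(A_{i-1}),\phi_{i+1,i}(A_{i+1}))$, which by the defining choice of the maps $\phi_{i,i\pm 1}$ is isomorphic to $\Gamma_i$. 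Thus the hypothesis $\mathrm{girth}(\Gamma_i)\geq 6$ for all $i$ supplies the link condition, yielding a simply connected piecewise-Euclidean CAT$(0)$ triangular complex $X^j(\Gamma_1,\Gamma_2,\Gamma_3)$ on which $G=\pi_1(D^j_k(\Gamma_1,\Gamma_2,\Gamma_3))$ acts isometrically, with the link at each vertex isomorphic to the corresponding $\Gamma_i$.

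Properness of the action follows because all of the local groups $X_i$, $A_i=\mathbb{Z}/k$, and the trivial face group are finite, so every cell stabiliser is finite; cocompactness is immediate since $G\backslash X^j$ is the single triangle $\mathcal{T}$. For the hyperbolicity statement, when $\mathrm{girth}(\Gamma_1)>6$ the same link verification can be carried out for a piecewise-hyperbolic metric in which each triangle is an equilateral hyperbolic triangle of slightly smaller vertex angle, chosen so that every link cycle still has angular length strictly greater than $2\pi$ (the tight hexagonal cycles, which could have been marginal, are excluded by the strict girth inequality on the relevant link). This upgrades $X^j$ to a CAT$(-1)$ metric space; since $G$ acts properly and cocompactly on it, the Švarc--Milnor lemma gives that $G$ is Gromov hyperbolic. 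The only genuinely non-routine step is the identification of the link of a vertex of the local development with the coset graph $\Gamma_i$; everything else is bookkeeping in the framework of complexes of groups.
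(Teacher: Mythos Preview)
The paper does not supply its own proof of this proposition; it records the statement as well known and cites \cite[Theorem~3.1]{Caprace-Conder-Kaluba-Witzel_triangle}. Your argument for developability and the CAT$(0)$ conclusion is the standard one and is correct.

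Your hyperbolicity argument, however, has a gap. You propose an \emph{equilateral} hyperbolic triangle with vertex angle $\pi/3-\varepsilon$ and assert that the tight hexagonal cycles are ruled out by the strict girth inequality. But the hypothesis is only $\mathrm{girth}(\Gamma_{1})>6$; the graphs $\Gamma_{2}$ and $\Gamma_{3}$ may have girth exactly $6$. A hexagon in $\Gamma_{2}$ or $\Gamma_{3}$ would then have angular length $6(\pi/3-\varepsilon)=2\pi-6\varepsilon<2\pi$, so the link condition fails at those vertices and your metric is not CAT$(-1)$.

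The repair is immediate: use a non-equilateral hyperbolic triangle. Keep the angles at the vertices carrying $\Gamma_{2}$ and $\Gamma_{3}$ equal to $\pi/3$, and take the angle at the vertex carrying $\Gamma_{1}$ to be some $\alpha$ with $2\pi/\mathrm{girth}(\Gamma_{1})\leq\alpha<\pi/3$. Since $\mathrm{girth}(\Gamma_{1})\geq 7$ this interval is nonempty, and a hyperbolic triangle with angles $(\alpha,\pi/3,\pi/3)$ exists because $\alpha+2\pi/3<\pi$. With this metric every link has systole at least $2\pi$, the complex is CAT$(-1)$, and your \v{S}varc--Milnor conclusion goes through.
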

	\begin{definition}[Generalized ordinary triangle groups]\label{def: generalized triangle 2}
		Consider the following complex of groups. Fix $k\geq 2$, and identify the boundaries of $k$ $2$-simplices to construct a simplicial complex $\mathcal{K}$ with three vertices $v_{1},v_{2},v_{3}$, three edges $e_{1}, e_{2}, e_{3}$, and $k$ $2$-simplices. Then $Lk(v_{i})\simeq C_{k,2}$, the cage graph on $k$ edges, i.e. the smallest $k$ regular graph of girth $2$. 
		
		Let $P_{i}=\pi_{1}(Lk(v_{i}))$, and let $G_{0,k}$ be the free group on $2k-2$ letters. Note that we can view $G_{0,k}$ as the fundamental group of a complex of groups with underlying simplicial complex $\mathcal{K}$ and vertex groups $P_{i}$. Now, let $\Gamma_{i}\looparrowright Lk(v_{i})$ be finite-sheeted normal covering graphs, with associated normal subgroups $Q_{i}\unlhd P_{i}$. Let $D$ be a complex of groups with underlying complex $\mathcal{K}$ and (finite) vertex groups $V_{i}=P_{i}\slash Q_{i}$. Since there are choices for the above complex, we will let $D^{j}_{0,k}(\Gamma_{1},\Gamma_{2},\Gamma_{3})$, $j=1,\hdots ,$ be the finite exhaustive list of possible complexes of groups achieved by the above construction.
		
		Form the \emph{($k$-fold) generalized ordinary triangular group} $$ G^{j}_{0,k}(\Gamma_{1},\Gamma_{2},\Gamma_{3})=\pi_{1}(D^{j}_{0,k}(\Gamma_{1},\Gamma_{2},\Gamma_{3}))=G_{0,k}\slash \langle\langle Q_{1}\cup Q_{2}\cup Q_{3}\rangle\rangle .$$
	\end{definition}
	Note that in this definition the graphs $\Gamma_{i}$ are covers of $C_{k,2}$ so that they are connected, contain no cut edges, and have girth at least $2$. 
	
	Theorem \ref{mainthm: cubulating groups}, along with Proposition \ref{lem: developability of gen triangle}, and \cite[Proposition 3.2]{Lubotzky-Manning-Wilton} below, allow us to cubulate $G^{j}_{k}(\Gamma_{1},\Gamma_{2},\Gamma_{3})$ and $G^{j}_{0,k}(\Gamma_{1},\Gamma_{2},\Gamma_{3})$ when given enough information about each of $\Gamma_{1},\Gamma_{2},\Gamma_{3}$. The purpose of this subsection is to provide a way to prove such a group acts properly discontinuously on a $CAT(0)$ cube complex by considering $\Gamma_{1}$ alone. Again, see Section \ref{subsec: Link conditions} for the relevant definitions.
	
	\begin{theorem}\label{mainthm: cubulating generalized triangle groups}
	Let $\Gamma_{i}\looparrowright C_{k,2}$ be finite-sheeted covers such that $girth(\Gamma_{i})\geq 6$ for each $i$, and let $G=G^{j}_{0,k}(\Gamma_{1},\Gamma_{2},\Gamma_{3})$ or $G=G^{j}_{k}(\Gamma_{1},\Gamma_{2},\Gamma_{3})$.
If $\Gamma_{1}$ is weighted strongly edge $3$-separated, then $G$ acts properly discontinuously on a $CAT(0)$ cube complex. If, in addition, $G$ is hyperbolic, then this action is cocompact. 
	\end{theorem}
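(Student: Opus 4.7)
The plan is to adapt the proof of Theorem~\ref{mainthm: cubulating groups}, using only the separation data at $\Gamma_{1}$. By Proposition~\ref{lem: developability of gen triangle}, $G$ acts properly and cocompactly on a simply connected triangular complex $X$ whose vertex links realize $\Gamma_{1},\Gamma_{2},\Gamma_{3}$. I would metrize each face as a unit equilateral Euclidean triangle; Gromov's link condition (using $girth(\Gamma_{i})\geq 6$) then makes $X$ into a CAT(0) space (hyperbolic when $girth(\Gamma_{1})>6$), and the combinatorial $3$-separation in $\Gamma_{1}$ becomes angular $\pi$-separation at $v_{1}$-type links. The obstruction is that $\Gamma_{2},\Gamma_{3}$ need not be $\pi$-separated, so $G\backslash X$ is not literally gluably $\pi$-separated, and Theorem~\ref{mainthm: cubulating groups} does not apply directly.

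The key structural observation is that in the antipodal graph $\Delta_{G\backslash X}$, every edge incident to a primary vertex of type $v_{i}$ has its other endpoint at a secondary vertex of type $m(e_{i})$, since antipodal pairs in a subdivided triangle pair each vertex with the midpoint of the opposite edge. I would therefore run the construction of Section~\ref{subsection: Constructing hypergraphs in polygonal complexes} using only cutsets at $v_{1}$-type primary vertices (drawn from $\Gamma_{1}$) and at $m(e_{1})$-type secondary vertices (where the unique $\pi$-separated cutset is the full edge set of $Lk(m(e_{1}))\cong C_{k,2}$, each edge of angular length $\pi$). Both kinds of cutsets are proper, so equatability holds automatically by the remark after the definition of equatable partitions, and the gluing equations along these \emph{flavor-$1$} edges reduce to the weight equations for $\Gamma_{1}$, solvable by hypothesis. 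The resulting flavor-$1$ hypergraphs inherit leafless convexity, cocompact stabilizers, and codimension-$1$ subgroup structure by essentially verbatim versions of Lemmas~\ref{lem: edge hypergraphs are leafless convex}, \ref{lem: wall stabilizers are cocompact}, and \ref{lem: hypergraph stabilizers are codimension-1}.

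The main obstacle is the analogue of Lemma~\ref{lem: edge hypergraph separation}: any geodesic $\gamma$ of length at least some uniform constant must be separated by some flavor-$1$ hypergraph. If $\gamma$ passes through a $v_{1}$-type vertex $v$ in its interior, the incoming and outgoing directions in $Lk(v)=\Gamma_{1}$ lie at combinatorial distance $\geq 3$, so \emph{strong} edge separation of $\Gamma_{1}$ yields a proper cutset whose canonical partition separates them, and Lemma~\ref{lem: separation condition} produces the desired separating hypergraph. If a subgeodesic lying in $X^{(1)}$ traverses an $e_{1}$-edge, the canonical cutset at its midpoint works similarly. The delicate case is when a subgeodesic $\delta$ of length $\geq 2D(X)$ crosses only triangles whose middle vertex (in the sense of the proof of Lemma~\ref{lem: edge hypergraph separation}) is of type $v_{2}$ or $v_{3}$; here I would argue via a combinatorial analysis of the edge-type sequence $\gamma$ traverses, using the vertex $3$-coloring of $X$ and the girth hypothesis, that the $v_{1}$-median in some triangle along $\delta$ must still be crossed transversely, so that Lemma~\ref{lem: separation condition} applied at an interior point of that median gives the separation. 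This density-of-flavor-$1$-crossings step is the principal technical difficulty.

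Granting the separation lemma, the linear lower bound $\#(p,q)\geq d_{X}(p,q)/c-\epsilon$ follows from Lemma~\ref{lem: wallspace separation from hypergraph separation}, and \cite[Theorem~1.4]{Hruska-Wise} gives proper discontinuity of the induced $G$-action on the dual cube complex. If $G$ is hyperbolic, convexity and cocompactness of hypergraph stabilizers together with \cite{Gitik-Mitra-Rips-Sageev_1998widths} bound the size of collections of pairwise transverse walls, so \cite[Lemma~7.2]{Hruska-Wise} yields cocompactness, exactly as in the proof of Theorem~\ref{mainthm: cubulating groups}.
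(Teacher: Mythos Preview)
Your outline is essentially the paper's approach: restrict the hypergraph construction to the component $\Delta_{1}$ of the antipodal graph (primary vertices of type $v_{1}$ and secondary vertices antipodal to them), solve the gluing equations there via the weight equations for $\Gamma_{1}$, and then prove a linear separation lemma for these flavor-$1$ hypergraphs before invoking Hruska--Wise. Your handling of the case where $\gamma$ passes through a $v_{1}$-vertex, and of the case where it runs along an $e_{1}$-edge, matches the paper's cases $a.ii)$/$b.iii)$ and $a.i)$ respectively.

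The one place where your plan diverges is the ``delicate case''. You propose a combinatorial argument about the edge-type sequence and the girth hypothesis to force a transverse $v_{1}$-median crossing somewhere along $\delta$. The paper instead gives a purely local Euclidean argument (its case $b.ii)$): if $\gamma_{2}$ crosses a triangle $T$ with entry and exit points $x,y$ such that the $v_{1}$-vertex and its antipodal midpoint lie on the \emph{same} boundary arc from $x$ to $y$, then $x$ and $y$ lie on distinct edges of $\partial T$, and an elementary angle computation in the adjacent triangles $T_{x},T_{y}$ shows that one of $\gamma_{1},\gamma_{3}$ crosses its triangle in the ``good'' configuration $b.i)$ (with $v_{1}$ and the antipodal midpoint on opposite arcs). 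This resolves the difficulty using only three consecutive triangles and no global combinatorics; the girth hypothesis is not used here at all. Your combinatorial route may well work, but you should be aware that the sharper local argument is available and is what the paper actually does.
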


	We use the following proposition, as stated in \cite[Proposition 3.2]{Lubotzky-Manning-Wilton}, which follows by an application of \cite[Theorem III.$\mathcal{C}$.4.17]{Bridson-Haefliger}.
	
	\begin{proposition*} \cite[Proposition 3.2]{Lubotzky-Manning-Wilton}
		If $girth(\Gamma_{i})\geq 6$ for each $i$, then $G^{j}_{0,k}(\Gamma_{1},\Gamma_{2},\Gamma_{3})$ acts properly discontinuously and cocompactly on a simply connected simplicial complex $X^{j}(\Gamma_{1},\Gamma_{2},\Gamma_{3})$ with links isomorphic to $\Gamma$, where $\Gamma\in\{\Gamma_{1},\Gamma_{2},\Gamma_{3}\}.$ Furthermore, if $girth(\Gamma_{1})\geq 6$ for each $i$ and $girth(\Gamma_{1})>6$, then $G_{0,k}(\Gamma_{1},\Gamma_{2},\Gamma_{3})$ is hyperbolic.
	\end{proposition*}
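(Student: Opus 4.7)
The plan is to verify the hypothesis of Theorem \ref{mainthm: cubulating groups}(ii) for a suitable $CAT(0)$ structure on the triangular complex provided by \cite[Proposition 3.2]{Lubotzky-Manning-Wilton}, thereby cubulating $G$. First I would apply that proposition (stated just after the theorem) to obtain a simply connected simplicial complex $X=X^{j}(\Gamma_{1},\Gamma_{2},\Gamma_{3})$ on which $G$ acts properly discontinuously and cocompactly, with each vertex link isomorphic to one of $\Gamma_{1},\Gamma_{2},\Gamma_{3}$. I then endow $X$ with the piecewise Euclidean metric in which every triangular face is a unit equilateral triangle: each edge of every link $\Gamma_{i}$ has angular length $\pi/3$, so the combinatorial girth bound $girth(\Gamma_{i})\geq 6$ upgrades to angular girth $\geq 2\pi$ in each link, and by Gromov's link condition $X$ is $CAT(0)$. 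In this metric, ``$\pi$-separated in the angular metric'' coincides with ``$3$-separated in the combinatorial metric'' for subsets of $E(\Gamma_{i})$.

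The central observation is that each triangle of $X$ contains exactly one vertex of each of the three types (since $\mathcal{K}$ has one vertex $v_{i}$ of each type), so in the antipodal graph $\Delta_{X}$ I would single out, in each triangle, the edge from the type-$1$ corner to the midpoint of the opposite $(v_{2},v_{3})$-edge. Working with this restricted antipodal subgraph, one uses $\pi$-separated cutsets only at two kinds of vertices: at type-$1$ primary vertices, the weighted strongly edge $3$-separated cutsets of $\Gamma_{1}$; and at the secondary vertices lying on $(v_{2},v_{3})$-edges, the full edge set of the cage link, which is a $\pi$-separated proper cutset by the remark following the antipodal graph definition. No cutsets are required at type-$2$ or type-$3$ vertices. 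The gluing equations at each edge joining a type-$1$ vertex to such a midpoint reduce to the weight equations for $\Gamma_{1}$, which by hypothesis admit a strictly positive solution; the construction of Lemma \ref{lem: solving gluing equations for minimal edge cutsets} then promotes this to a global strictly positive solution. Exactly as in Section \ref{subsection: Constructing hypergraphs in polygonal complexes}, this data yields hypergraphs in $X$ which lift to leafless convex trees, and Lemma \ref{lem: separation condition} applies verbatim since it is local at each vertex.

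The key remaining step is the analogue of Lemma \ref{lem: edge hypergraph separation}: for any geodesic $\gamma$ of sufficient length in $X$, some restricted hypergraph separates its endpoints. I would run the case analysis of the proof of that lemma, extracting a subgeodesic $\delta$ of length $\geq 2D(X)$ between primary vertices. If $\delta$ lies in the $1$-skeleton, primary vertices alternate type along $\delta$, so either some traversed vertex is of type $1$ (and strong edge $3$-separation of $\Gamma_{1}$ supplies a cutset placing the two sides of $\delta$ in distinct partition classes of $Lk(v_{1})$) or all traversed edges are $(v_{2},v_{3})$-edges (and the canonical cage cutset at any interior midpoint separates). If $\delta$ cuts a triangle interior, producing a primary vertex $w$ between entry and exit, then if $w$ is type-$1$ one again uses strong separation of $\Gamma_{1}$, while otherwise I would show that extending $\delta$ by a bounded number of further triangle crossings must produce a type-$1$ vertex traversal or a $(v_{2},v_{3})$-edge midpoint traversal, by an elementary pigeonhole using that each triangle contains exactly one vertex of each type.

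The main obstacle will be establishing this pigeonhole step cleanly, i.e.\ producing a uniform constant $K$ (depending on $X$) such that every geodesic of length $\geq K\cdot D(X)$ is guaranteed to traverse either a type-$1$ vertex or a $(v_{2},v_{3})$-edge midpoint in the separating manner required. Once the separation statement is in place, the wallspace formalism of Section \ref{subsection: Hypergraph stabolizers and wallspaces} applies without modification: hypergraph stabilisers are virtually free and codimension-$1$ by Lemma \ref{lem: hypergraph stabilizers are codimension-1}, and \cite[Theorem 1.4]{Hruska-Wise} produces a proper action of $G$ on $C(X,\mathcal{W})$. In the hyperbolic case, the bounded-width theorem of \cite{Gitik-Mitra-Rips-Sageev_1998widths} together with \cite[Lemma 7.2]{Hruska-Wise} yields cocompactness, exactly as in the conclusion of the proof of Theorem \ref{mainthm: cubulating groups}.
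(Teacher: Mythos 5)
You have not attempted to prove the stated proposition at all. The statement to be justified is the developability statement: that under the girth hypothesis, the complex of groups $D^{j}_{0,k}(\Gamma_{1},\Gamma_{2},\Gamma_{3})$ develops to a simply connected simplicial complex $X^{j}(\Gamma_{1},\Gamma_{2},\Gamma_{3})$ on which $G^{j}_{0,k}$ acts properly discontinuously and cocompactly with the prescribed links, together with the hyperbolicity assertion when $girth(\Gamma_{1})>6$. Your proposal opens by \emph{applying} that very proposition to obtain $X$, then sets up hypergraph walls, gluing equations, and Hruska--Wise machinery in order to cubulate $G$. That is an argument for a downstream result (essentially Theorem \ref{mainthm: cubulating generalized triangle groups} / Corollary \ref{coralph: small girth generalized triangle groups}), not for the proposition itself; using the statement as a hypothesis in its own proof is circular.

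The paper does not give an internal proof of this proposition: it is cited from \cite{Lubotzky-Manning-Wilton} and is noted to follow from Bridson--Haefliger's developability theorem for nonpositively curved complexes of groups, \cite[Theorem III.$\mathcal{C}$.4.17]{Bridson-Haefliger}. A correct argument in that spirit would run as follows. Equip the scwol underlying $\mathcal{K}$ with the piecewise Euclidean metric making each $2$-simplex a unit equilateral triangle, so that each link edge has angular length $\pi/3$; the hypothesis $girth(\Gamma_{i})\geq 6$ forces each vertex link of the complex of groups to have angular systole $\geq 2\pi$, so by the Gromov link condition the complex of groups is nonpositively curved. Bridson--Haefliger's theorem then gives developability: the universal development $X^{j}(\Gamma_{1},\Gamma_{2},\Gamma_{3})$ is simply connected and $CAT(0)$, the action of $G^{j}_{0,k}$ on it is properly discontinuous and cocompact (the local groups are finite and the quotient is compact), and the vertex links are isomorphic to the prescribed $\Gamma_{i}$. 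If in addition $girth(\Gamma_{1})>6$, one perturbs the piecewise Euclidean metric to a piecewise hyperbolic one (or adjusts the triangle angles) so that the same link condition yields $CAT(-1)$, whence $G$ acts geometrically on a $CAT(-1)$ space and is hyperbolic. None of the cubulation apparatus (cutsets, gluing equations, hypergraph walls, Sageev's construction) plays any role in establishing this proposition, and the pigeonhole gap you flag at the end of your proposal is a gap in the wrong proof.
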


Now, fix $j$, let $G=G_{k}^{j}(\Gamma_{1},\Gamma_{2},\Gamma_{3})$ or $G=G_{0,k}^{j}(\Gamma_{1},\Gamma_{2},\Gamma_{3})$, and let $X=X^{j}(\Gamma_{1},\Gamma_{2},\Gamma_{3})$ be as above. Note that the antipodal graph $\Delta_{G\backslash X}$ is the disjoint union of three components $\Delta_{1},\Delta_{2},\Delta_{3}$, such that for any vertex $v\in \Delta_{i}$ either $v$ is secondary, or $Lk_{G\backslash X}(v)\cong \Gamma_{i}$. Suppose $\Gamma_{1}$ is a weighted strongly edge $3$-separated graph, and endow $X$ with the metric that turns each triangle into a unit equilateral Euclidean triangle: $X$ is $CAT(0)$ with this metric. Then for each $v\in V(\Delta_{1})$, $Lk_{G\backslash X}(v)$ is a strongly edge $\pi$-separated graph. Since cutsets are proper, we can assign to every cutset the canonical partition: as discussed in Section \ref{subsection: Examples of solutions of the gluing equations} this is sufficient for cubulation, and therefore we omit the reference to partitions for the remainder of this section. As in Section \ref{subsection: Constructing hypergraphs in polygonal complexes} construct the graphs $\underline{\Lambda}_{1},\hdots ,\underline{\Lambda}_{m}$ as images of
$$\Sigma_{i}\looparrowright\Delta_{1}\looparrowright G\backslash X.$$

In particular, if a vertex $v$ has $Lk_{X}(v)=\Gamma_{1}$, we have a hypergraph passing through every $\pi$-separated edge cutset in $Lk_{X}(v)=\Gamma_{1}$. As in Section \ref{subsection: Hypergraph stabolizers and wallspaces}, we can again build the system of hypergraph walls.

We now analyse the separation of this complex by hypergraphs.
\begin{lemma}\label{lem:  strongly separated triangle}
Suppose that $\Gamma_{1}$ is weighted strongly edge $3$-separated, and let $\gamma$ be a geodesic in $X$ of length at least $100$. There exists a hypergraph $\Lambda$ such that $\Lambda$ separates the endpoints of any finite geodesic extension of $\gamma$.
\end{lemma}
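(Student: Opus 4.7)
The approach is to adapt the strategy of Lemma~\ref{lem: edge hypergraph separation} to the restricted setting where only $\Delta_{1}$-hypergraphs are available. These hypergraphs pass only through type-$1$ primary vertices of $X$ (using the strong edge $\pi$-separation of $\Gamma_{1}$ in their links) and through midpoints of type-$\{2,3\}$ edges (where the full edge set of the cage link furnishes a $\pi$-separated cutset with canonical partition $\{\{u_{+}\},\{u_{-}\}\}$). The length bound of $100$ is a convenient safely large constant, comfortably exceeding $4D(X)=12$, intended to absorb the additional flexibility required by this restriction.

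First I would extract a subgeodesic $\delta\subset\gamma$ of length at least $2D(X)$ starting and ending on $X^{(1)}$, and then split into three cases based on how $\delta$ interacts with the combinatorial structure. In the first case, $\delta$ travels along an edge through a type-$1$ vertex $v$: the entry and exit directions in $Lk(v)\cong\Gamma_{1}$ are vertices at angular distance at least $\pi$, so the strong edge $\pi$-separation hypothesis (translating the combinatorial $3$-separation via edges of length $\pi/3$) yields a cutset $C$ separating them, and the hypergraph through $(C,P)$ with canonical partition $P$ separates by Lemma~\ref{lem: separation condition}. In the second case, $\delta$ travels along a type-$\{2,3\}$ edge through its midpoint $s$, and the full-cutset hypergraph through $s$ separates the two halves of the edge. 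In the third case, $\delta$ crosses the interior of a triangular face $F$ transversally with entry and exit on opposite sides of the altitude $\alpha_{F}$ from the type-$1$ vertex $v_{1}^{F}$ to the midpoint of the opposite type-$\{2,3\}$ edge: the hypergraph containing $\alpha_{F}$ meets $\delta\cap F$ at a single interior point $x$, since transversality together with convexity of $\gamma$ and the hypergraph in the CAT(0) space forces the intersection to be a single point rather than a segment, and Lemma~\ref{lem: separation condition} applies at $x$ with the local partition given by the two semicircles of $Lk(x)$ on either side of $\alpha_{F}$.

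The main obstacle is to show that at least one of these three cases is realized by some subgeodesic of $\gamma$ of length at least $100$. Avoidance of the first case means $\gamma$ misses all type-$1$ vertices, and avoidance of the second means $\gamma$ never travels along a type-$\{2,3\}$ edge; under these constraints $\gamma$ must transversally cross many triangles. In each such triangle the entry/exit edge pair is either $(e_{12},e_{13})$ (always giving the third case) or involves the edge $e_{23}$ at a non-midpoint (giving the third case when the crossing lies on the correct side of the opposite edge's midpoint). A trajectory systematically evading all ``opposite-side'' configurations is highly constrained, and the length bound $100$ is chosen to preclude such a trajectory using cocompactness of the $G$-action together with CAT(0) convexity: any persistent ``same-side'' trajectory would force $\gamma$ into a bounded convex region, contradicting its length. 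This combinatorial/geometric step is the technical heart of the argument.
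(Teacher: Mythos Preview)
Your case structure is close to the paper's: your cases 1, 2, 3 correspond roughly to the paper's cases a.ii, a.i, b.i respectively. However, the proposal has a genuine gap in the ``main obstacle'' step, and the suggested mechanism for closing it is not correct.

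The claim that a persistent ``same-side'' trajectory would be forced into a bounded convex region by cocompactness is unsubstantiated; there is no reason a CAT(0) geodesic avoiding certain local configurations should be globally bounded. The paper's argument is entirely local. It decomposes $\gamma=\beta\cdot\gamma_{1}\cdot\gamma_{2}\cdot\gamma_{3}\cdot\delta$ where each $\gamma_{i}$ has length in $[1,\sqrt{3/2}]$ with endpoints on $X^{(1)}$. If $\gamma_{2}$ crosses a face $T$ with entry and exit on the \emph{same} side of the type-1 altitude (the paper's case b.ii), then a Euclidean angle computation in the two adjacent triangles $T_{x},T_{y}$ shows that one of $\gamma_{1},\gamma_{3}$ must cross \emph{its} triangle on opposite sides of the altitude (case b.i). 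This one-step propagation is the actual content; no long-range argument is needed, and the length bound 100 is used only to guarantee enough room to extract three consecutive such subsegments with buffer on each side.

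You also omit the paper's case b.iii: when $\gamma_{2}$ travels exactly along the altitude, from the midpoint $u$ of the type-$\{2,3\}$ edge to the type-1 vertex $v$. Here neither your case 2 (you are not on the type-$\{2,3\}$ edge itself) nor your case 3 (entry and exit are not on opposite sides of the altitude --- they are \emph{on} it) applies. The paper extends past $v$ to the next point $w\in X^{(1)}$; since $d_{Lk(v)}([u,v],[v,w])\geq\pi$, the \emph{strong} edge $3$-separation of $\Gamma_{1}$ furnishes a cutset in $Lk(v)$ separating $[u,v]$ from $[v,w]$, and Lemma~\ref{lem: separation condition} finishes. This is one of the two places where strength (as opposed to mere edge separation) is genuinely used.
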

\begin{proof}
Since $\gamma$ has length at least $100$, we may write $\gamma=\beta \cdot\gamma_{1}\cdot\gamma_{2}\cdot\gamma_{3}\cdot \delta$ such that 
$1\leq l(\gamma_{i})\leq \sqrt{3\slash 2}$, $l(\beta),l(\delta)\geq 40$ and the endpoints of each $\gamma_{i}$ lie in $X^{(1)}$. We can see that either:
\begin{enumerate}[label = \bf{case} $\boldsymbol{\alph*})$]
 
    \item $\gamma_{2}$ contains an edge of $X^{(1)}$ of the form $[u,v]$, 
    \item or $\gamma_{2}$ contains a subpath that intersects $X^{(1)}$ at exactly two points $x,y$ in $\partial T$ for some $2$-cell $T$.
\end{enumerate}

Now consider case $a)$. There are two subcases to consider.
\begin{enumerate}[label = \bf{Case} $\boldsymbol{a.\roman*}$)]
\item $u$ and $v$ have links isomorphic to $\Gamma_{2}$ and $\Gamma_{3}$ respectively (or vice versa),
\item or $v$ has link isomorphic to $\Gamma_{1}.$
\end{enumerate}
In case $a.i)$, $\gamma_{2}$ contains a secondary vertex $x$ that is opposite to some $w$ with $Lk_{X}(w)\cong\Gamma_{1}$. By Lemma \ref{lem: separation condition}, the hypergraph passing through $x$ and $w$ therefore separates the endpoints of $\gamma'$, and so the endpoints of any geodesic extension of $\gamma.$

In case $a.ii)$, consider the path $\gamma_{3}.$ If $\gamma_{3}$ is not an edge then $\gamma_{3}$ satisfies the hypothesis of case $b)$ using $\gamma_{3}$ in place of $\gamma_{2}$. Otherwise we may assume that $\gamma_{2}\cdot \gamma_{3}=[u,v]\cdot[v,w]$. Now, $d_{Lk(v)}([u,v],[v,w])\geq \pi$ as $\gamma_{2}\cdot \gamma_{3}$ is geodesic: let $C$ be the cutset separating $[u,v]$ and $[v,w]$ in $\Gamma_{1}$ (this exists as $\Gamma_{1}$ is strongly $3$-separated): by Lemma \ref{lem: separation condition} the hypergraph passing through $C$ in $Lk(v)$ separates the endpoints of $\gamma.$

\begin{figure}[H]
\begin{minipage}[b]{0.4\textwidth}
	 \includegraphics[scale=0.9]{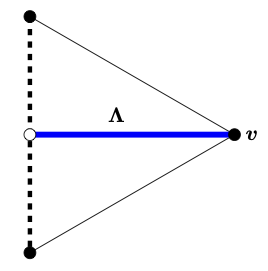}\centering
	    \caption*{Case a.i)}
	    \end{minipage}
 \begin{minipage}[b]{0.4\textwidth}
 \includegraphics[scale=0.9]{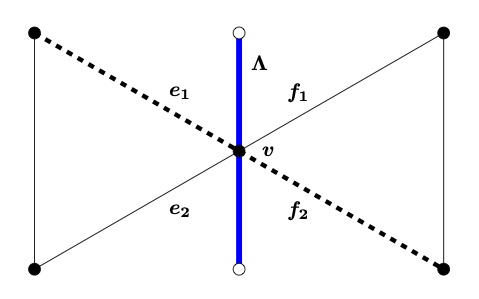}	  \centering
	    \caption*{\hspace*{70pt}Case a.ii)}
\end{minipage}
	    \label{fig:gentriangle1}
\end{figure}

For case $b)$ there are three subcases:
\begin{enumerate}[label= \bf{case} $\boldsymbol{b.\roman*)}$]
    \item the two paths in $\partial T$ from $x$ to $y$ each contain one of the vertices $u,v$ such that $u$ is primary with $Lk(u)\cong \Gamma_{1}$, and $v$ is secondary and antipodal to $u$ in $\partial T$,
    \item one of the two paths in $\partial T$ from $x$ to $y$ contains both of the vertices $u,v$ where $u$ is primary with $Lk(u)\cong\Gamma_{1}$, and $v$ is secondary and opposite to $u$,
    \item or $\gamma_{2}=[u,v]$ where $u$ is secondary and opposite to $v$, with $Lk(v)\cong\Gamma_{1}$.
\end{enumerate}
In case b.i), by Lemma \ref{lem: separation condition} the hypergraph passing through $u$ and $v$ separates the endpoints of $\gamma_{2}$ and so the endpoints of $\gamma$. 

Consider case $b.ii)$. Let $T_{x}$, $T_{y}$ be the two $2$-cells adjacent$T$ containing the vertex $x$ and $y$ respectively, with $\gamma$ passing through both $T_{x}$, $T_{y}$. Note that $x$ and $y$ lie on different edges of $\partial T$. Suppose that $\gamma_{1}$ passes through $x$ and $\gamma_{3}$ passes through $y$: we may see that by a simple Euclidean geometry argument for angles that either $\gamma_{1}$ or $\gamma_{3}$ satisfies case $b.i)$.

In case $b.iii)$ extend $\gamma_{2}$ through $v$ until we meet $X^{(1)}$ at a third point $w$: without loss of generality this can be written $\gamma_{1}\cdot \gamma_{2}=[u,v]\cdot [v,w]$. Now, as $\gamma$ is geodesic, we have $d_{Lk(v)}([u,v],[v,w])\geq 3$. Let $C$ be the cutset in $\Gamma_{1}$ such that  $[u,v]$ and $[v,w]$ are separated by $C$ (this exists as $\Gamma_{1}$ is strongly $3$-separated), and let $\Lambda$ be the hypergraph passing through $C$ in $Lk(v)$. By Lemma \ref{lem: separation condition} $\Lambda$ separates the endpoints of $\gamma$.

\begin{figure}[H]
\centering
\begin{minipage}[b]{0.25\textwidth}
\includegraphics[scale=0.9]{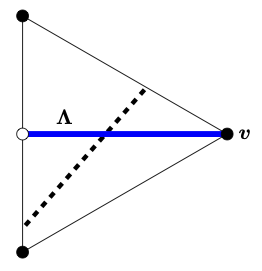}	    \centering
	    \caption*{Case b.i)}
\end{minipage}\hspace*{30pt}
\begin{minipage}[b]{0.49\textwidth}
\includegraphics[scale=0.8]{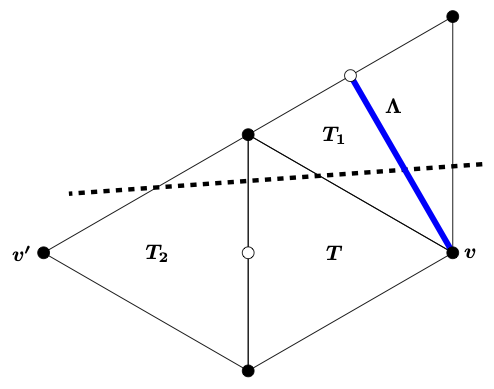}	   \centering
	    \caption*{\hspace*{70pt}Case b.ii)}
	    \end{minipage}
	    \label{fig:gentriangle2}
	\end{figure}
\begin{figure}[H]
\centering
\includegraphics[scale=0.9]{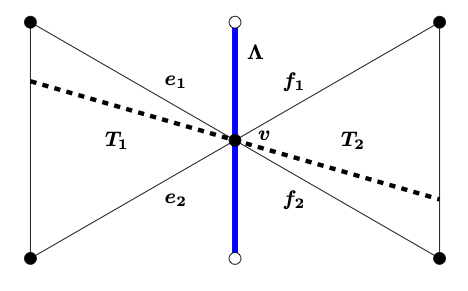}
\caption*{Case b.iii)}
\end{figure}	
\end{proof}

We can now prove Theorem \ref{mainthm: cubulating generalized triangle groups}.
\begin{proof}[Proof of Theorem \ref{mainthm: cubulating generalized triangle groups}]
By \cite[Proposition 3.2]{Lubotzky-Manning-Wilton}, the group $G$ acts properly discontinuously and cocompactly on a simply connected simplicial complex $X$. Endow this complex with the Euclidean metric: by Gromov's link condition X is $CAT(0)$ and has three types of vertices $\{v_{i}\}$ where $Lk(v_{i})=\Gamma_{i}$. 

If $\Gamma_{1}$ is strongly $3$-separated, then by Lemma \ref{lem:  strongly separated triangle}, we have $$\#(p,q)\geq d_{X}(p,q)\slash 100 -1.$$ The results then follow by \cite[Theorem 5.2]{Hruska-Wise} and \cite[Lemma 7.2]{Hruska-Wise} similarly to the proof of Theorem \ref{mainthm: cubulating groups}, using Lemma \ref{lem:  strongly separated triangle} in place of Lemma \ref{lem: edge hypergraph separation}.
\end{proof}

%%%%%%%%%%%%%%%%%%%%%%%%%%%%%%%%%%%%%%%%%%%%%%%%
\subsection{Small girth generalized triangle groups}\label{subsection: small girth generalized triangle groups}
To prove Corollary \ref{coralph: small girth generalized triangle groups}, we now analyse the separation of various small girth graphs considered in \cite{Caprace-Conder-Kaluba-Witzel_triangle}. These graphs arise in the work of \cite{Conder-Morton_1995classification,Conder-Dobcsanyi_2002trivalent,Conder-Malnic-Marusic-Potocnik_2006census} and are regular bipartite graphs with girth $6$ or $8$, diameter $3$ or $4$, and an edge regular subgroup of the automorphism group. Furthermore, they all have a vertex transitive automorphism group.

In particular, we have the following.

\begin{lemma*}\cite{Caprace-Conder-Kaluba-Witzel_triangle} Let $\Gamma$ be one of $\{F24A,F26A,F40A,F48A\}$. Then $Aut(\Gamma)$ acts vertex transitively. Let $\Gamma$ be one of $\{F24A,F26A,F40A,F48A, G54\}$: there exists a subgroup $H(\Gamma)\leq Aut(\Gamma)$ that acts freely and transitively on $E(\Gamma)$ and preserves the bipartition of $\Gamma.$

\end{lemma*}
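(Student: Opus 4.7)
The plan is to verify both claims by directly inspecting the automorphism groups of the five graphs in question, all of which are well-documented in the literature. First I would appeal to the Foster Census, which classifies all connected cubic symmetric (i.e.\ arc-transitive) graphs up to moderate size: $F24A$, $F26A$, $F40A$, and $F48A$ all appear there together with explicit descriptions of $Aut(\Gamma)$. Arc-transitivity in particular implies vertex-transitivity, which establishes the first claim. The Gray graph $G54$ is famously the smallest cubic semi-symmetric graph, i.e.\ it is edge-transitive but not vertex-transitive; this is precisely why $G54$ is excluded from the first list but included in the second.

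For the second claim I would exploit that each $\Gamma$ in the list is a bipartite cubic graph, say with bipartition $V(\Gamma)=A\sqcup B$, so $|A|=|B|$ and $|E(\Gamma)|=3|A|$. Let $Aut^{+}(\Gamma)\leq Aut(\Gamma)$ be the bipartition-preserving subgroup, which has index $1$ or $2$. Since the full group is edge-transitive, so is $Aut^{+}(\Gamma)$. The goal is to locate a subgroup $H(\Gamma)\leq Aut^{+}(\Gamma)$ of order $3|A|$ that acts \emph{regularly} on $E(\Gamma)$, i.e.\ a complement to the edge-stabilizer inside $Aut^{+}(\Gamma)$.

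I would carry this out by a direct finite search inside the known automorphism group of each $\Gamma$: for the four Foster graphs the structure of $Aut(\Gamma)$ is explicitly tabulated, and for $G54$ the automorphism group of the Gray graph has been determined in work of Bouwer, Malni\v{c}--Maru\v{s}i\v{c}--Wang--Xu, and others, so the relevant subgroup lattice is accessible. In practice one enumerates the subgroups of $Aut^{+}(\Gamma)$ of order $|E(\Gamma)|$ and checks whether any acts freely (equivalently, transitively) on the edge set; since $Aut^{+}(\Gamma)$ is a small finite group in each case this can be executed in a computer algebra system such as GAP together with the GRAPE package.

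The main obstacle I expect is conceptual uniformity rather than difficulty: because the five graphs are genuinely heterogeneous (four arc-transitive Foster graphs and one semi-symmetric graph), there is no single slick argument producing $H(\Gamma)$ all at once, and the cleanest route is a case-by-case verification. The semi-symmetric case $G54$ needs the most care, because one cannot read off vertex-transitivity and must verify by hand that the bipartition-preserving subgroup already contains a regular action on edges; here it is useful that $G54$ is, by construction, an incidence graph on which a natural group of order $|E(\Gamma)|$ acts regularly, and that action provides the desired $H(\Gamma)$.
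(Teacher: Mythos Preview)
Your proposal is a reasonable sketch of how one would verify these facts, but you should note that the paper does not actually prove this lemma: it is stated as a quoted result from \cite{Caprace-Conder-Kaluba-Witzel_triangle}, with no accompanying proof. The author simply imports the statement from that reference, where the graphs and their automorphism groups are analysed in detail. So there is no ``paper's own proof'' to compare against; your case-by-case computational verification is exactly the kind of argument that sits behind the citation, and is consistent with how such facts about explicit small graphs are established in the sources the paper relies on.
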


We make the following definitions.
\begin{definition}[Cubic graphs]
      Let $\Gamma$ be a finite graph. It is \emph{cubic} if it is connected, bipartite, and trivalent.  
\end{definition}
\begin{definition}[$\dagger$-separated graphs]
Let $\Gamma$ be a graph. We say that $\Gamma$ is \emph{$\dagger$-separated} if:

\begin{enumerate}[label=$\roman*)$]
    \item $\Gamma$ is cubic,
    \item  $girth(\Gamma)= 6$ or $8$,
    \item and $\Gamma$ is disjointly weighted vertex $3$-separated by proper cutsets, (so that $\Gamma -C$ consists of exactly three components for each $C$).
    \end{enumerate}
\end{definition}
\begin{definition}[Good cubic graphs]
  A cubic graph is \emph{good} if $girth(\Gamma)=6$ or $8$, $diam(\Gamma)\leq 4$, $Aut(\Gamma)$ acts vertex transitively, and there exists a group $H(\Gamma)\leq Aut(\Gamma)$ that acts freely and transitively on $E(\Gamma)$ and preserves the bipartition of $\Gamma.$
\end{definition}
In the above definition, for any vertex $v$ of $\Gamma$, $H(\Gamma)_{v}$ is of order three and so cyclically permutes the neighbours of $v$.

Fix a vertex $v_{0}\in V(\Gamma).$ For each pair of vertices $v\neq w$, choose an element $\gamma_{v,w}\in Aut(\Gamma)$ with $\gamma_{v,w} v=w$, such that
\begin{enumerate}[label=$\roman*)$]
    \item $\gamma_{v,w}=\gamma_{v,v_{0}}\gamma_{v_{0},w},$
    \item $\gamma_{v,w}=\gamma_{w,v}^{-1},$
    \item and if $v,w\in V_{1}$ or $v,w\in V_{2}$, then $\gamma_{v,w}\in H(\Gamma)$.
\end{enumerate}
For each $v\in V(\Gamma)$ we will let neighbours of $v$ be defined as $w_{1}(v),w_{2}(v),w_{3}(v)$, so that $\gamma_{v_{0},v}w_{i}(v_{0})=w_{i}(v).$ We also assign to $H(\Gamma)_{v}$ a generator $h_{v}$ such that $h_{v}w_{1}(v)=w_{2}(v)$, $h_{v}w_{2}(v)=w_{3}(v)$, and so on, i.e. $h_{v}=\gamma_{v_{0},v}h_{v_{0}}\gamma_{v,v_{0}}$.

\begin{definition}[$\Large{*}$-separated cutsets]
Let $C$ be a vertex cutset in a graph $\Gamma$. We say $C$ is a \emph{$\Large{*}$-separated cutset} if $C$ is $3$-separated, for any vertex $w\in C$, there are two vertices $v,v'$ adjacent to $w$ such that $v$ and $v'$ lie in separate components of $\Gamma-C$, and $\Gamma - C$ contains exactly two components.

\end{definition}

\begin{definition}
   Let $\Gamma$ be a good cubical graph, and let $\mathcal{C}$ be a collection of $\Large{*}$-separated cutsets. For $v\in V(\Gamma)$, we define
   $$\large{*}(v,i,j)$$
   to be the set of all $\Large{*}$-separated cutsets $C\ni v$ such that $w_{i}(v)$ and $w_{j}(v)$ lie in the same connected component of $\Gamma - C$. We further define $$\mathcal{C}(v,i,j):=\mathcal{C}\cap \Large{*}(v,i,j).$$
\end{definition}

\begin{definition}[$\Large{*}$-separated graph]
Let $\Gamma$ be a graph. We say that $\Gamma$ is \emph{$\Large{*}$-separated} if:
\begin{enumerate}[label=$\roman*)$]
    \item $\Gamma$ is a cubic graph,
    \item $\Gamma$ is weighted vertex $3$-separated by a set $\mathcal{C}$ of $\Large{*}$-separated cutsets,
    \item for any  vertex $v$ and any $i\neq j$, $\mathcal{C}(v,i,j)$ is non-empty
    \item there exists an integer $M$ and positive integers $n(C)$ for each $C\in \mathcal{C}$ such that for any vertex $v$ and any $i\neq j $,
    $$\sum\limits_{C\in\mathcal{C}(v,i,j)}n(C)=\frac{M}{3}$$
\end{enumerate}
\end{definition}

\begin{definition}
        Let $v$ be any vertex in $\Gamma$. We define $D(v)=\{w\in \Gamma\;:\;d(v,w)\geq 5\}$.
\end{definition}
For ease, we prove the following lemma.

\begin{lemma}\label{lem: simple condition for 3 sep}
Let $\Gamma$ be a good cubic graph. Let $V_{1}\sqcup V_{2}$ be the bipartite partition of vertices, and choose $v_{1}\in V_{1}$. Suppose that there exists  $\Large{*}$-separated cutset $A_{i}\ni v_{1}$ such  that for each $u\in D(w_{1}(v_{1}))$, there exists some $i$, $w_{3}(v_{1})$ and $u$ lie in separate components of $\Gamma$ - $A_{i}$.
Then $\Gamma$ is $\Large{*}$-separated.
\end{lemma}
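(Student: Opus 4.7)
The plan is to propagate the hypothesized family $\{A_i\}$ around $\Gamma$ using its transitive automorphism group and then verify the four conditions of $\Large{*}$-separatedness one at a time. I would set
\[
\mathcal{C} := \{\gamma A_i : \gamma \in \mathrm{Aut}(\Gamma),\ i\},
\]
counted with multiplicity in the sense of the proof of Lemma \ref{lem: vertex transitive automorphism group can solve equations}. Since being a $\Large{*}$-separated cutset is an automorphism invariant, every element of $\mathcal{C}$ is a $\Large{*}$-separated cutset, and vertex transitivity of $\mathrm{Aut}(\Gamma)$ combined with $v_1 \in A_i$ gives $\bigcup_{C \in \mathcal{C}} C = V(\Gamma)$. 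The size bound $|C| \geq 2$ follows because good cubic graphs are $3$-connected, so no single vertex disconnects them.

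Next I would verify the hypotheses of Lemma \ref{lem: vertex separated condition} with $n = 3$. The girth hypothesis $girth(\Gamma) \geq 6$ is part of the definition of a good cubic graph. For condition (iii) (separating two neighbors of any vertex $v$), I would take any $\gamma \in \mathrm{Aut}(\Gamma)$ with $\gamma v_1 = v$, obtain $\gamma A_i \in \mathcal{C}$ containing $v$, and note that by the defining property of a $\Large{*}$-separated cutset, two of the three neighbors of $v$ lie in separate components of $\Gamma - \gamma A_i$; then applying powers of the cyclic generator $h_v \in H(\Gamma)_v$ produces cutsets realizing each of the remaining two pairings of neighbors of $v$. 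For condition (iv) (separating any pair $x, y$ of vertices with $d(x,y) \geq 3$), I would translate by a suitable $\gamma \in \mathrm{Aut}(\Gamma)$ so that $(\gamma x, \gamma y)$ is of the form $(w_3(v_1), u)$ with $u \in D(w_1(v_1))$; the hypothesis then provides some $A_i$ separating them, and $\gamma^{-1}A_i \in \mathcal{C}$ separates $x$ and $y$. This establishes that $\Gamma$ is vertex $3$-separated, and Lemma \ref{lem: vertex transitive automorphism group can solve equations} upgrades this to \emph{weighted} vertex $3$-separation, yielding condition (ii) of $\Large{*}$-separation.

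Condition (iii) of $\Large{*}$-separation ($\mathcal{C}(v, i, j) \neq \emptyset$ for every vertex $v$ and $i \neq j$) is exactly the same $h_v$-cycling argument used above, because $h_v$ sends $w_1(v) \to w_2(v) \to w_3(v) \to w_1(v)$ and so maps $\mathcal{C}(v, 1, 2) \to \mathcal{C}(v, 2, 3) \to \mathcal{C}(v, 1, 3) \to \mathcal{C}(v, 1, 2)$ bijectively. Condition (iv) of $\Large{*}$-separation (the strengthened weight equation $\sum_{C \in \mathcal{C}(v,i,j)} n(C) = M/3$) follows from the same cyclic bijection: since $h_v \in \mathrm{Aut}(\Gamma)$ preserves $\mathcal{C}$ with multiplicity, the three weighted sums are equal, and by vertex transitivity $M := \sum_{C \ni v} n(C)$ is independent of $v$, so each of the three sums equals $M/3$.

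The main obstacle is condition (iv) of vertex $3$-separation. When $\Gamma$ has diameter $\geq 5$ the hypothesis about $D(w_1(v_1))$ handles distant pairs after one translation. When the diameter of $\Gamma$ is $\leq 4$ (as it must be for the good graphs in the intended applications), the set $D(w_1(v_1))$ can be empty, so one cannot directly invoke the hypothesis; instead one must rule in the pairs at distance $3$ or $4$ by a supplementary argument using $h_v$-cycling at an intermediate vertex on a geodesic from $x$ to $y$, together with the separation structure of the neighbor-separating cutsets already produced. Getting these last few distance classes correct, and ensuring that the translating automorphisms are chosen compatibly with the bipartition and the fixed labeling $w_i$, is the delicate part of the argument.
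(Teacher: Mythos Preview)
Your outline is correct and follows essentially the same route as the paper: propagate the seed cutsets by automorphisms, use $h_v$-cycling both to fill out $\mathcal{C}(v,i,j)$ and to equalize the three weighted sums at each vertex, and handle vertex pairs at distance $3$ or $4$ by picking a cutset at an interior vertex of a geodesic that separates its two neighbours along the path. The only substantive difference is cosmetic: the paper builds $\mathcal{C}$ from the edge-regular subgroup $H(\Gamma)$ together with one bridging element $\gamma_{v_1,w_1(v_1)}$, so that equality of the weight constants $M_1=M_2$ across the bipartition must be checked by hand, whereas your use of the full vertex-transitive $\mathrm{Aut}(\Gamma)$ makes this automatic via Lemma~\ref{lem: vertex transitive automorphism group can solve equations}. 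Note also that $\mathrm{diam}(\Gamma)\leq 4$ is already part of the definition of a \emph{good} cubic graph, so $D(w_1(v_1))=\emptyset$ and the hypothesis on it is vacuous; only the intermediate-vertex argument for distances $3$ and $4$ is ever needed, and the paper carries that out exactly as you sketch.
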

\begin{proof}
We need to show three separate things. Firstly we show that there exists a collection $\mathcal{C}$ of $\Large{*}$-separated cutsets so that $\Gamma$ is vertex $3$-separated by $\mathcal{C}$.
Recall the element $\gamma=\gamma_{v_{1},v_{2}}$, the element of $Aut(\Gamma)$ taking $v_{1}$ to $v_{2}:=w_{1}(v_{1})\in V_{2}$.

Let $H:=H(\Gamma)$ be the group acting edge-regularly on $\Gamma$ and preserving the bipartite partition. Let $A=\{A_{i}\}_{i}$, $B=\gamma \cdot A$, $\mathcal{A}=H\cdot A$, $\mathcal{B}=H\cdot B$, and $\mathcal{C}=\mathcal{A}\cup \mathcal{B}.$

By assumption, for some $i\neq j$ $\mathcal{C}(v_{1},i,j)$ is non-empty. For any vertex $v$, $\gamma_{v_{1},v}\mathcal{C}(v_{1},1,2)=\mathcal{C}(v,1,2)$, and furthermore,  $h_{v}\mathcal{C}(v,1,2)=\mathcal{C}(v,2,3)=h_{v}^{-1}\mathcal{C}(v,1,3)$. Therefore $\mathcal{C}(v,i,j)$ is non empty for all $v$ and all $i\neq j$. In particular for any vertex $v$ and $w,w'$ adjacent to $v$ there exists a cutset separating $w$ and $w'$.

 Now let $u,v$ be vertices distance at least $3$ apart. Note that $d(u,v)\leq 4$ as $diam(\Gamma)\leq 4$. Assume $d(u,v)=3$, and let $$p=(u,u_{1})(u_{1},u_{2})(u_{2},v)$$ be any edge path between $u$ and $v$. 

Now, suppose without loss of generality that $u=w_{1}(u_{1})$ and $u_{2}=w_{2}(u_{1})$. Then choosing a cutset $C\in\mathcal{C}(u_{1},1,3)$, $u$ and $u_{2}$ lie on separate components of $\Gamma-C$. Since $C$ is $3$-separated, and $u_{1}\in C$, it follows that $u$, $v$ are not elements of $C$. As $u$ is adjacent to $u_{1}$ and $v$ is adjacent to $u_{2}$, it follows that $u$ and $v$ lie in different components of $\Gamma-C$.

If $d(u,w)=4$, then we repeat the argument for 
$$p=(u,u_{1})(u_{1},u_{2})(u_{2},u_{3})(u_{3},v)$$ and for $C$ the cutset containing $u_{2}$ and separating $u_{1}$ and $u_{3}$. It now follows by Lemma \ref{lem: vertex separated condition} that $\Gamma$ is vertex $3$-separated.

If $d(u,w)=5,6$, consider the edge path 
$$p=(u,u_{1})(u_{1},u_{2})(u_{2},u_{3})(u_{3},u_{4})(u_{4},v),$$
or 
$$p=(u,u_{1})(u_{1},u_{2})(u_{2},u_{3})(u_{3},u_{4})(u_{4},u_{5})(u_{5},v).$$
We may map $u_{1}$ to $v_{1}$ and $u$ to $w_{3}(v_{1})$: by taking $A$ and mapping back, by assumption this cutset separates $u$ and $v$.

Finally we wish to find the positive integers $M$ and $n(C)$. This immediately implies the weight equations can be solved, and so $\Gamma$ is weighted vertex $3$-separated with respect to $\mathcal{C}.$ The proof is similar to the proof of Lemma \ref{lem: vertex transitive automorphism group can solve equations} concerning vertex transitive automorphism groups.

Let $\tilde{\mathcal{C}}=H\cdot A\cup H\cdot B$ counted \emph{with multiplicity}. Let $u,v\in V_{1}$.  For $i=1,2,3,$ we have $\gamma_{u,v}(w_{i}(u))=w_{i}(v)$. It follows that for $C\in \tilde{\mathcal{C}},$
$$C\in \mathcal{C}(u,i,j)\iff \gamma_{u,v}C \in \mathcal{C}(v,i,j).$$
Similarly

$$C\in \mathcal{C}(u,1,2)\iff h_{u}C \in \mathcal{C}(u,2,3)\iff h_{u}^{2}C\in\mathcal{C}(u,1,3).$$
Let $n(C)=\vert\{C'\in\tilde{\mathcal{C}}\;:\;C'=C\}\vert,$ i.e. $n(C)$ is the multiplicity of $C$ in $\tilde{\mathcal{C}}$. By applying $\gamma_{v_{1},v}$ and $h_{v_{1}}$, we see that for any $v\in V_{1}$ and $i\neq j,i'\neq j'$:   
    $$\sum\limits_{C\in\mathcal{C}(v_{1},i,j)}n(C)=\sum\limits_{C\in\mathcal{C}(v,i',j')}n(C).$$
Therefore there exists an integer $M_{1}$ such that for for any $v\in V_{1}$ and $i\neq j$:
    $$\sum\limits_{C\in\mathcal{C}(v,i,j)}n(C)=\frac{M_{1}}{3}.$$
Similarly there exists an integer $M_{2}$ such that for any $v\in V_{2}$ and $i\neq j$:   
    $$\sum\limits_{C\in\mathcal{C}(v,i,j)}n(C)=\frac{M_{2}}{3}.$$
    
    Now finally we wish to show that $M_{1}=M_{2}$. However, this follows immediately by construction, as $\mathcal{B}=\gamma \cdot\mathcal{A}$, and $\mathcal{C}=\mathcal{A}\cup \mathcal{B}$.
\end{proof}
Using this, we investigate the separation of several graphs.

    \begin{table}[H]
    \centering
    \small
    \begin{tabular}{c|ccl}
  $x_{i}$  &\multicolumn{3}{c}{$x_{j}$ adjacent to $x_{i}$}\\
    \hline
0&1& 2& 3\\
1&0& 4& 5\\
2&0& 6& 8\\
3&0& 7& 9\\
4&1& 11& 14\\
5&1& 10& 13\\
6&2& 12& 16\\
7&3& 12& 15
\end{tabular}
    \begin{tabular}{c|ccl}
  $x_{i}$  &\multicolumn{3}{c}{$x_{j}$ adjacent to $x_{i}$}\\
    \hline
8&2& 11& 18\\
9&3& 10& 17\\
10&5& 9& 21\\
11&4& 8& 20\\
12&6& 7& 19\\
13&5& 19& 23\\
14&4& 19& 22\\
15&7& 20& 23
\end{tabular}
    \begin{tabular}{c|ccl}
  $x_{i}$  &\multicolumn{3}{c}{$x_{j}$ adjacent to $x_{i}$}\\
    \hline
16&6& 21& 22\\
17&9& 20& 22\\
18&8& 21& 23\\
19&12& 13& 14\\
20&11& 15& 17\\
21&10& 16& 18\\
22&14& 16& 17\\
23&13& 15& 18
\end{tabular}
    \caption{Edge incidences for $F24A$}
    \label{tab:F24 edges}
\end{table}
\begin{lemma}
The graph $F24A$ is $\dagger$-separated.

\end{lemma}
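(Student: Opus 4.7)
My approach mirrors that of Lemma \ref{lem: separation of the generalized quadrangle}: verify the structural hypotheses by inspection and then exhibit an explicit family of cutsets produced by computer search. First, from Table \ref{tab:F24 edges} (or equivalently the Foster census) I confirm that $F24A$ is connected, $3$-regular, bipartite, and of girth $6$, which establishes conditions $i)$ and $ii)$ of $\dagger$-separation.

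The remaining work is condition $iii)$: exhibit a collection $\mathcal{C} = \{C_1, \ldots, C_m\}$ of proper $3$-separated vertex cutsets such that $\mathcal{C}$ partitions $V(F24A)$ and $F24A - C_i$ has exactly three components for every $i$. To produce $\mathcal{C}$, I would run the algorithm of Section \ref{section: finding cutsets}: enumerate $3$-separated vertex subsets as independent sets in the auxiliary graph whose edges join vertices at combinatorial distance less than $3$, filter for those whose complement in $F24A$ has exactly three components with each removed vertex having one neighbour in each, and then select a subfamily whose vertex sets partition $V(F24A)$. Note that if a proper cutset $C$ satisfies the condition that every $v \in C$ has its three neighbours distributed one per component, then $C$ is automatically an independent set, so $\mathcal{C}$ amounts to a proper vertex colouring of $F24A$ with each colour class a $3$-separated cutset of the required type. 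I would then display the resulting cutsets explicitly (in a table analogous to the enumeration in Lemma \ref{lem: separation of the generalized quadrangle}) and verify $3$-separation, properness, component count, and the partition property directly from the adjacency data.

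Because $\mathcal{C}$ is a disjoint partition of $V(F24A)$, the weight equations are solved trivially by setting $n(C_i) = 1$ for every $i$: each vertex belongs to exactly one cutset, so both sides of every weight equation evaluate to $1$. This yields the required weighting and completes the proof that $F24A$ is $\dagger$-separated.

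The main obstacle is not conceptual but combinatorial: existence of such a partition is not automatic. Indeed, $F26A$ admits no proper $3$-separated vertex cutsets at all (cf.\ Lemma \ref{lem: F26A is * separated}), so a priori the search could fail, and even when cutsets exist one must ensure the component count is exactly three rather than two or more. The high symmetry of $F24A$, in particular the vertex-transitivity of $Aut(F24A)$, makes it plausible that a single $Aut(F24A)$-orbit of a well-chosen small cutset already tiles the vertex set; this is where I would focus the search first.
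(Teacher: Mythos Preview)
Your approach is correct and essentially identical to the paper's: the paper verifies the structural hypotheses, runs the computer search of Section~\ref{section: finding cutsets}, and exhibits four explicit disjoint proper $3$-separated vertex cutsets $C_{1},\dots,C_{4}$ (each of size six) that partition $V(F24A)$, noting $\mathrm{diam}(F24A)=4$ so the separation condition on distant points is easily checked. The only substantive difference is that the paper actually records the four cutsets, whereas your proposal leaves the output of the search unspecified.
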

\begin{proof}
By a computer search we find all $3$-separated vertex cutsets in $F24A$:
  \begin{table}[H]
 \centering
    \begin{tabular}{l}
$C_{1}=\{x_{0}, x_{10}, x_{11}, x_{12},x_{22},x_{23}\}$,\\
$C_{2}=\{x_{1}, x_{8}, x_{9}, x_{15}, x_{16}, x_{19}\}$,\\
$C_{3}=\{x_{2}, x_{4}, x_{7}, x_{13},x_{17},x_{21}\}$,\\
$C_{4}=\{x_{3}, x_{5}, x_{6}, x_{14}, x_{18}, x_{19}\}$.
  \end{tabular}
  \end{table}
We note $diam(F24A)=4$. As the above are disjoint and proper, it follows easily that $F24A$  is $\dagger$-separated.
\end{proof}
\vspace*{-10pt}
\begin{table}[H]
\centering
\small
    \begin{tabular}{c|ccl}
  $x_{i}$  &\multicolumn{3}{c}{$x_{j}$ adjacent to $x_{i}$}\\
    \hline
0&1& 2& 3\\
1&0& 4& 7\\
2&0& 6& 9\\
3&0& 5& 8\\
4&1& 10& 13\\
5&3& 11& 14\\
6&2& 12& 15\\
7&1& 11& 16\\
8&3& 12& 17
\end{tabular}
    \begin{tabular}{c|ccl}
  $x_{i}$  &\multicolumn{3}{c}{$x_{j}$ adjacent to $x_{i}$}\\
    \hline
9&2& 10& 18\\
10&4& 9& 22\\
11&5& 7& 20\\
12&6& 8& 21\\
13&4& 23& 24\\
14&5& 24& 25\\
15&6& 23& 25\\
16&7& 21& 23\\
17&8& 22& 24
\end{tabular}
    \begin{tabular}{c|ccl}
  $x_{i}$  &\multicolumn{3}{c}{$x_{j}$ adjacent to $x_{i}$}\\
    \hline
18&9& 20& 25\\
19&20& 21& 22\\
20&11& 18& 19\\
21&12& 16& 19\\
22&10& 17& 19\\
23&13& 15& 16\\
24&13& 14& 17\\
25&14& 15& 18
\end{tabular}
    \caption{Edge incidences for $F26A$}
    \label{tab:F26 edges}
\end{table}
\begin{lemma}\label{lem: F26A is * separated}
The graph $F26A$ is $\Large{*}$-separated.
\end{lemma}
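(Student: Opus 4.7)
The plan is to apply Lemma \ref{lem: simple condition for 3 sep}, which reduces $\Large{*}$-separation of a good cubic graph to exhibiting a finite collection of $\Large{*}$-separated cutsets at a single base vertex with a specific separation property at vertices far from one chosen neighbour. First I would verify that $F26A$ meets the standing hypotheses: it is cubic by direct inspection of the incidence table; it has girth $6$; by the preceding lemma cited from \cite{Caprace-Conder-Kaluba-Witzel_triangle} its automorphism group is vertex transitive and contains a subgroup $H(F26A)$ that acts freely and transitively on $E(F26A)$ while preserving the bipartition. The final requirement, $\mathrm{diam}(F26A)\le 4$, can be checked by a breadth-first search from any vertex (by vertex transitivity, one vertex suffices).

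Next I would fix $v_1=x_0$, labelling its three neighbours $w_1(v_1)=x_1,\,w_2(v_1)=x_2,\,w_3(v_1)=x_3$ according to the generator $h_{v_0}$ of $H(F26A)_{v_0}$. Using the computer search procedure of Section \ref{section: finding cutsets}, I would then enumerate candidate $3$-separated vertex subsets $C\subseteq V(F26A)$ with $v_1\in C$, retaining those which are $\Large{*}$-separated cutsets: that is, any two vertices of $C$ are at combinatorial distance at least $3$, the complement $F26A-C$ has exactly two connected components, and at every vertex $w\in C$ at least two of the three neighbours of $w$ lie in distinct components of $F26A-C$. This produces an explicit list $\{A_i\}$ of $\Large{*}$-separated cutsets through $v_1$.

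With these cutsets in hand, I would verify the hypothesis of Lemma \ref{lem: simple condition for 3 sep}: for every vertex $u\in D(w_1(v_1))=\{u:d(w_1(v_1),u)\ge 5\}$, I would check that some $A_i$ separates $w_3(v_1)$ from $u$ in $F26A-A_i$. If $\mathrm{diam}(F26A)\le 4$ then $D(w_1(v_1))=\emptyset$ and the condition holds vacuously; if the diameter is $5$, only a small set of vertices needs to be examined, and this can again be handled by direct enumeration. Lemma \ref{lem: simple condition for 3 sep} then yields immediately that $F26A$ is $\Large{*}$-separated, with the weight function $n(C)$ produced by averaging over the orbits of $H(F26A)\cup\gamma_{v_1,w_1(v_1)}H(F26A)$ exactly as in the proof of that lemma.

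The only genuine obstacle is a bookkeeping one, namely identifying an explicit $\Large{*}$-separated cutset at $v_1$ whose orbit under $H(F26A)$ (together with the cross-bipartition translate produced by $\gamma_{v_1,w_1(v_1)}$) realises the required separations; the remark in the paper that no \emph{proper} $3$-separated vertex cutset exists in $F26A$ signals that cutsets here are necessarily non-proper, which is the substantive reason the weaker notion of $\Large{*}$-separation, rather than $\dagger$-separation, was introduced. Once the cutsets are produced by the computer search, everything else is automatic from Lemma \ref{lem: simple condition for 3 sep}.
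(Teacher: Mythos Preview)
Your proposal is correct and follows the same route as the paper: apply Lemma~\ref{lem: simple condition for 3 sep} with $v_1=x_0$ and $w_i=x_i$, observe that $\mathrm{diam}(F26A)=4$ so $D(w_1(v_1))=\emptyset$ and the separation hypothesis is vacuous, and exhibit one $\Large{*}$-separated cutset through $v_1$ (the paper records $A_1=\{x_0,x_{10},x_{12},x_{14},x_{20},x_{23}\}$). The only thing missing from your outline is the concrete cutset itself, which the paper supplies explicitly.
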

\begin{proof}
We can take $v_{1}=x_{0}$, $w_{i}=x_{i}$. $D(x_{3})=\emptyset$, as $diam(F26A)=4$. Using the notation as in Lemma \ref{lem: simple condition for 3 sep} we find $A_{1}=\{x_{0}, x_{10}, x_{12}, x_{14}, x_{20}, x_{23}\}.$ The result follows by Lemma \ref{lem: simple condition for 3 sep}.
\end{proof}

We defer the incidence table of $F40A$, and the collection of cutsets found, to Appendix \ref{section: cutset appendix}.
\begin{lemma}\label{lem: F40 is separated}
The graph $F40A$ is weighted strongly edge $3$-separated.
\end{lemma}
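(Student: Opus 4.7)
The plan is to follow the template established by Lemma \ref{lem: separation of the generalized quadrangle} for the minimal generalized quadrangle: exhibit an explicit collection of proper $3$-separated edge cutsets in $F40A$ (found by computer search using the algorithm of Section \ref{section: finding cutsets}), and then apply the sufficient criteria already proved in the paper. Since we are claiming \emph{weighted strongly} edge $3$-separated, and since the hypotheses of Caprace--Conder--Kaluba--Witzel guarantee that $F40A$ admits an edge-transitive subgroup $H(F40A) \leq \text{Aut}(F40A)$, Lemma \ref{lem: edge transitive automorphism group can solve equations} tells us that the \emph{weighted} part comes for free once we establish strong edge $3$-separation.

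Concretely, I would first verify by direct inspection of the incidence table that $F40A$ is connected, trivalent, has no vertices of degree $1$, and has girth $6$, so in particular $\mathrm{girth}(F40A) \geq 2 \cdot 3 = 6$, which is the hypothesis required to invoke Lemma \ref{lem: strong edge sep condition} with $n = 3$. Next, I would list (in an appendix, as the paper already indicates it will) the collection $\mathcal{C} = \{C_1, \dots, C_N\}$ of proper $3$-separated edge cutsets produced by the computer search, and then verify the following three finite checks: (i) $\bigcup_i C_i = E(F40A)$ and $|C_i| \geq 2$ for each $i$ (this gives edge $3$-separated); (ii) for every pair of adjacent vertex pairs $\{u,u'\}, \{v,v'\}$ in $F40A$ with $d(u,v) \geq 3$, some $C_i \in \mathcal{C}$ separates $\{u,u'\}$ from $\{v,v'\}$ in the sense of the hypothesis of Lemma \ref{lem: strong edge sep condition}; and (iii) each $C_i$ is proper, i.e.\ the endpoints of each edge in $C_i$ lie in distinct components of $F40A - C_i$. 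Conditions (i) and (iii) together yield that $F40A$ is edge $3$-separated; combining with condition (ii) and Lemma \ref{lem: strong edge sep condition} gives that $F40A$ is strongly edge $3$-separated with the same family $\mathcal{C}$.

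Finally, invoking Lemma \ref{lem: edge transitive automorphism group can solve equations} together with the existence of the edge-regular subgroup $H(F40A) \leq \text{Aut}(F40A)$ (so in particular $\text{Aut}(F40A)$ is edge transitive), we upgrade strong edge $3$-separation to \emph{weighted} strong edge $3$-separation, completing the proof. Explicitly, the weight $n(C)$ attached to each cutset in $\mathcal{C}'$ is given by the multiplicity with which $C$ appears in the disjoint-union-with-multiplicity $\bigsqcup_{C \in \mathcal{C}} H(F40A) \cdot C$, as in the proof of Lemma \ref{lem: vertex transitive automorphism group can solve equations}.

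The main obstacle is essentially bookkeeping: the cutsets must be produced by computer search and the $O(|V|^4)$ separation check (ii) must be carried out explicitly, which is why the incidence table and cutset list are deferred to the appendix. There is no conceptual difficulty beyond what has already been set up in Lemmas \ref{lem: strong edge sep condition} and \ref{lem: edge transitive automorphism group can solve equations}; the entire content of the lemma is the existence of a suitable family $\mathcal{C}$, and the correctness of that family on the finite graph $F40A$ is mechanically verifiable from the tabulated data.
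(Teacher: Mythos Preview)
Your proposal is correct and matches the paper's approach: exhibit explicit $3$-separated proper edge cutsets by computer search (deferred to Appendix~\ref{section: cutset appendix}), verify the hypothesis of Lemma~\ref{lem: strong edge sep condition}, and then invoke Lemma~\ref{lem: edge transitive automorphism group can solve equations} for the weighted conclusion. The paper makes the verification slightly more economical by using the edge- and vertex-transitivity of $\mathrm{Aut}(F40A)$ to reduce your check~(ii) to the single base edge $(x_{0},x_{1})$ rather than all pairs; as a minor aside, $F40A$ has girth $8$ rather than $6$, though only $\mathrm{girth}\geq 6$ is needed for Lemma~\ref{lem: strong edge sep condition}.
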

\begin{proof}
We require a large number of cutsets for this proof: they can be found in Appendix \ref{section: cutset appendix}. 

In particular, we find a collection of cutsets $\{C_{i}\}_{i}$ such that for any vertices $w_{1},w_{2}$ with $d(x_{0},w_{1})\geq 3$ and $d(w_{1},w_{2})=1$ there exists some $C_{i}$ separating $\{x_{0},x_{1}\}$ and $\{w_{1},w_{2}\}$ (this can be easily checked by computer).
Similarly for any vertices $w_{1},w_{2}$ with $d(x_{1},w_{1})\geq 3$ and $d(w_{1},w_{2})=1$ there exists some $C_{i}$ separating $\{x_{0},x_{1}\}$ and $\{w_{1},w_{2}\}$. By passing to subsets of $C_{i}$ we may assume each of these cutsets are minimal and therefore proper.

As $Aut(F40)$ acts edge and vertex transitively, it follows by Lemma \ref{lem: strong edge sep condition} that $F40$ is strongly edge $3$-separated.

By Lemma \ref{lem: edge transitive automorphism group can solve equations}, $F40A$ is weighted disjointly strongly edge $3$-separated.
\end{proof}

 \begin{table}[H]
\centering
 \small
    \begin{tabular}{c|ccl}
  $x_{i}$  &\multicolumn{3}{c}{$x_{j}$ adjacent to $x_{i}$}\\
    \hline
0&1& 2& 3\\
1&0& 4& 5\\
2&0& 6& 8\\
3&0& 7& 9\\
4&1& 11& 17\\
5&1& 10& 16\\
6&2& 13& 21\\
7&3& 12& 20\\
8&2& 15& 19\\
9&3& 14& 18\\
10&5& 23& 25\\
11&4& 22& 24\\
12&7& 23& 29\\
13&6& 22& 28\\
14&9& 22& 27\\
15&8& 23& 26
\end{tabular}
    \begin{tabular}{c|ccl}
  $x_{i}$  &\multicolumn{3}{c}{$x_{j}$ adjacent to $x_{i}$}\\
    \hline
16&5& 27& 31\\
17&4& 26& 30\\
18&9& 25& 35\\
19&8& 24& 34\\
20&7& 28& 33\\
21&6& 29& 32\\
22&11& 13& 14\\
23&10& 12& 15\\
24&11& 19& 43\\
25&10& 18& 42\\
26&15& 17& 47\\
27&14& 16& 46\\
28&13& 20& 45\\
29&12& 21& 44\\
30&17& 40& 46\\
31&16& 39& 47
\end{tabular}
    \begin{tabular}{c|ccl}
  $x_{i}$  &\multicolumn{3}{c}{$x_{j}$ adjacent to $x_{i}$}\\
    \hline
32&21& 41& 43\\
33&20& 41& 42\\
34&19& 40& 44\\
35&18& 39& 45\\
36&43& 45& 46\\
37&42& 44& 47\\
38&39& 40& 41\\
39&31& 35& 38\\
40&30& 34& 38\\
41&32& 33& 38\\
42&25& 33& 37\\
43&24& 32& 36\\
44&29& 34& 37\\
45&28& 35& 36\\
46&27& 30& 36\\
47&26& 31& 37
\end{tabular}
    \caption{Edge incidences for $F48A$}
    \label{tab:F48 edges}
\end{table}

\begin{lemma}
The graph $F48A$ is $\dagger$-separated.
\end{lemma}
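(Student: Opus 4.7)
The plan is to mirror the strategy used for $F24A$: exhibit an explicit partition of $V(F48A)$ into proper $3$-separated vertex cutsets, each of which induces exactly three connected components of the graph after deletion. Once such a partition is found, the cubic condition is immediate from the incidence table, the girth condition ($F48A$ has girth $8$, in the allowable range $\{6,8\}$) is a known property of the Foster census graph, and disjointly weighted vertex $3$-separation with all weights equal to $1$ is automatic: every vertex lies in exactly one cutset, so the weight equations reduce to the trivial identity.

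First I would use the computer search outlined in Section \ref{section: finding cutsets} to enumerate all $3$-separated vertex cutsets $C$ of $F48A$ such that $F48A - C$ has exactly three connected components and such that the three neighbours of each vertex $v \in C$ lie in three distinct components (i.e.\ $C$ is proper). Since $F48A$ is vertex transitive and possesses a large automorphism group preserving many symmetries, I would then look for a family $\{C_1,\dots ,C_m\}$ of such cutsets that partitions $V(F48A)$; a natural ansatz is to take an orbit (or union of a few orbits) of a single cutset under a subgroup of $\mathrm{Aut}(F48A)$ acting freely on the chosen cutset. Given that $|V(F48A)|=48$ and the cutsets for the analogous graphs $F24A,F26A$ have size $6$, I would expect a partition into $8$ cutsets of size $6$, or possibly $6$ cutsets of size $8$, depending on the symmetry one exploits.

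Once the partition $V(F48A) = \bigsqcup_{i=1}^{m} C_i$ is in hand, the verification is mechanical: for each $C_i$ check (a) that $d_\Gamma(v,w)\geq 3$ for all distinct $v,w \in C_i$ (so $C_i$ is $3$-separated), (b) that $F48A - C_i$ has exactly three connected components, and (c) that for every vertex $v\in C_i$ its three neighbours lie in three different components (properness, using that $F48A$ is cubic). Each of these checks is a finite computation that can be carried out by the code accompanying Section \ref{section: finding cutsets}, exactly as was done for $F24A$ and $F26A$.

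The only real obstacle is finding the right partition, since the existence of a partition by proper $3$-separated vertex cutsets with precisely three components each is a rigid combinatorial requirement, and not every good cubic graph admits one (compare $F26A$, which is only $\Large{*}$-separated rather than $\dagger$-separated, since it does not admit a proper partition). However, because $F48A$ has larger girth and a richer automorphism group than $F26A$, I expect this search to succeed; the argument then concludes exactly as in the $F24A$ case, with the final sentence ``As the above are disjoint and proper, it follows easily that $F48A$ is $\dagger$-separated.''
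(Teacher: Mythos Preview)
Your proposal is correct and follows essentially the same approach as the paper: a computer search yields a disjoint family of proper $3$-separated vertex cutsets partitioning $V(F48A)$, and the $\dagger$-separation conditions then follow immediately. The only discrepancy is in your ansatz for the cutset sizes: the paper actually finds four cutsets of size $12$ (not eight of size $6$ or six of size $8$), but since you correctly defer to the computer search rather than relying on this guess, this does not affect the validity of your argument.
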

\begin{proof}
By a computer search we find all $3$-separated vertex cutsets in $F48A$:
  \begin{table}[H]
 \centering
    \begin{tabular}{l}
$C_{1}=\{x_{0}, x_{16}, x_{17}, x_{18}, x_{19}, x_{20}, x_{21}, x_{22}, x_{23}, x_{36}, x_{37}, x_{38}\}$,\\
$C_{2}=\{x_{1}, x_{6}, x_{7}, x_{14}, x_{15}, x_{24}, x_{25}, x_{30}, x_{31}, x_{41}, x_{44}, x_{45}\}$,\\
$C_{3}=\{x_{2}, x_{5}, x_{9}, x_{11}, x_{12}, x_{26}, x_{28}, x_{32}, x_{34}, x_{39}, x_{42}, x_{46}\}$,\\
$C_{4}=\{x_{3}, x_{4}, x_{8}, x_{10}, x_{13}, x_{27}, x_{29}, x_{33}, x_{35}, x_{40}, x_{43}, x_{47}\}$.
  \end{tabular}
  \end{table}
The above are disjoint and proper, and it can be seen that $F48A$  is $\dagger$-separated.
\end{proof}
We defer the incidence table of $G54$, and the collection of cutsets found, to Appendix \ref{section: cutset appendix}.
\begin{lemma}\label{lem: G54 is separated}
    The Gray Graph $G54$ is strongly edge $3$-separated.
\end{lemma}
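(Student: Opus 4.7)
The plan is to mirror the proof of Lemma \ref{lem: F40 is separated} for $F40A$, adapted to account for the fact that $G54$ is edge-transitive but \emph{not} vertex-transitive. First I would use the computer search algorithm of Section \ref{section: finding cutsets} to produce a collection $\mathcal{C} = \{C_i\}_i$ of proper $3$-separated edge cutsets in $G54$, tabulated in Appendix \ref{section: cutset appendix}. One then checks directly that $\bigcup_i C_i = E(G54)$, that $|C_i| \geq 2$ for every $i$, and (since $G54$ is cubic and connected) that $G54$ has no vertices of degree $1$. This establishes that $G54$ is edge $3$-separated with respect to $\mathcal{C}$.

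Next I would apply Lemma \ref{lem: strong edge sep condition} with $n = 3$. Since $\mathrm{girth}(G54) = 8 \geq 6$, the girth hypothesis is automatic. What remains is to verify the combinatorial condition of Lemma \ref{lem: strong edge sep condition}: for every pair of vertices $u, v$ with $d(u, v) \geq 3$ and every $u', v'$ adjacent to $u, v$ respectively, some cutset in $\mathcal{C}$ separates $\{u, u'\}$ from $\{v, v'\}$. The Gray graph is edge-transitive with two vertex orbits $V_1, V_2$ coinciding with the bipartition. Fixing a representative edge $e = (x_0, x_1)$ with $x_0 \in V_1$ and $x_1 \in V_2$, every edge of $G54$ lies in the $\mathrm{Aut}(G54)$-orbit of $e$, and automorphisms carry $3$-separated cutsets to $3$-separated cutsets. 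Thus, after enlarging $\mathcal{C}$ to its $\mathrm{Aut}(G54)$-orbit, it suffices to verify by computer that for each vertex $w$ with $d(x_0, w) \geq 3$ and each edge $(w, w')$ there is some $C_i$ separating $\{x_0, x_1\}$ from $\{w, w'\}$, and similarly with the roles of $x_0$ and $x_1$ interchanged.

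The main obstacle relative to $F40A$ is precisely the failure of vertex-transitivity, which prevents a single-vertex-based argument: the computer check must cover both orbits of source vertices, and we must invoke edge-transitivity rather than vertex-transitivity to translate cutsets. Once this finite case analysis is complete, Lemma \ref{lem: strong edge sep condition} immediately yields that $G54$ is strongly edge $3$-separated with the enlarged collection.
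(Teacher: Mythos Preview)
Your proposal is correct and follows essentially the same strategy as the paper: both reduce the verification of Lemma~\ref{lem: strong edge sep condition} to a single reference edge via edge-transitivity, check both endpoints by computer to handle the two vertex orbits, and enlarge the cutset collection by the $\mathrm{Aut}(G54)$-orbit. The only cosmetic difference is that the paper fixes the edge $(x_0,x_{25})$ rather than $(x_0,x_1)$ and organises its computer-found cutsets so that each contains an edge incident to $x_0$ or $x_{25}$ other than $(x_0,x_{25})$ itself, which makes it transparent that $x_0$ and $x_{25}$ always lie in the same component.
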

\begin{proof}
We require a large number of cutsets for this proof: they can be found in Appendix \ref{section: cutset appendix}. In particular, we find a collection of $3$-separated cutsets $\{C_{i}\}_{i}$ such that each $C_{i}$ contains one of the edges $$(x_{0},x_{1}),(x_{0},x_{53}),(x_{24},x_{25}),(x_{25},x_{26}).$$
Therefore, as each cutset is $3$-separated, they cannot contain the edge $(x_{0},x_{25})$, and so for each cutset, $x_{0}$ and $x_{25}$ lie in the same component of $G54- C_{i}$.

We also show that for any point $v$ with $d(x_{0},v)\geq 3$ and any neighbour $w$ of $v$, there exists some $C_{i}$ separating $\{x_{0},x_{25}\}$ and $\{v,w\}$. Furthermore, for any point $v$ with $d(x_{25},v)\geq 3$ and any neighbour $w$ of $v$, there exists some $C_{i}$ separating $\{x_{0},x_{25}\}$ and $\{v,w\}$.
By passing to subsets of $C_{i}$ we may assume each of these cutsets are minimal and therefore proper.
Now, let $p=(u,w_{1})(w_{1},w_{2})\hdots(w_{n},v)$ be some path with $2\leq n\leq 4$ of length between $3$ and $6$. Let $u'$ be adjacent to $u$ and $v'$ be adjacent to $v$.

Note again that $Aut(G54)$ acts transitively on the set of edges.
If we can map $u$ to $x_{0}$ by some element $\gamma\in Aut(G54)$, then we may also map $u'$ to $x_{25}$ by $\gamma$, and then for some $i$, $C_{i}$ separates $x_{0},x_{25}$ and $\gamma v,\gamma v'$: $\gamma^{-1}C_{i}$ then separates $u,u'$ and $v,v'$. Otherwise, we map $u$ to $x_{25}$ by $\gamma$, so that $\gamma u'=x_{0}$. The result follows similarly.

Therefore, $G54$ is strongly edge $3$-separated, and as it has an edge transitive automorphism group, it is weighted strongly edge $3$-separated by Lemma \ref{lem: edge transitive automorphism group can solve equations}.
\end{proof}
We finally need to prove the following.
\begin{lemma}\label{lem: splicing together generalized triangle groups}
Let $Y$ be a finite triangle complex such that each triangle is a unit equilateral Euclidean triangle. Suppose that the link of each vertex is either $\Large{*}$-separated or $\dagger$-separated with the combinatorial metric (we allow a mixture of these). Then $Y$ is gluably $\pi$-separated.
\end{lemma}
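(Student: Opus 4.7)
The plan is to verify the four conditions of being gluably (vertex) $\pi$-separated for $Y$. Because every triangle is a unit equilateral Euclidean triangle, each edge in the angular metric of every link has length $\pi/3$, so any $3$-separated cutset in the combinatorial metric is a $\pi$-separated cutset in the angular metric. Both $\Large{*}$-separated and $\dagger$-separated graphs are cubic of girth at least $6$, and by Lemma \ref{lem: vertex separated condition} each link of $Y$ is then vertex $\pi$-separated, verifying condition (ii); condition (i) is automatic in the vertex case, and Gromov's link condition ensures $Y$ is non-positively curved.

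For condition (iii), I choose partitions as follows. At a vertex $v$ whose link is $\Large{*}$-separated, each cutset $C$ has $Lk(v)-C$ with exactly two components, so I take $P_{1}(C)$ to be the canonical $2$-partition. At a vertex $v$ whose link is $\dagger$-separated, write $\pi_{0}(Lk(v)-C)=\{A_{1},A_{2},A_{3}\}$ and take the three \emph{leave-one-out} $2$-partitions $P_{i}(C):=\{\{A_{i}\},\, A_{j}\cup A_{k}\}$; any pair of points in distinct components is then separated by at least one chosen partition. Now fix an oriented edge $e$ of $Y$ with endpoints $v$ and $w$. Since $Y$ is triangular with trivalent links, $e$ is contained in exactly three triangles $T_{1},T_{2},T_{3}$, and the components of $St(v')-v'$ and $St(w')-w'$ are canonically identified with $\{T_{1},T_{2},T_{3}\}$ via $\phi$. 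Every chosen $(C,P)\in\mathcal{CP}(e)\cap\mathcal{C}_{v}$ induces a $2$-partition of $\{T_{1},T_{2},T_{3}\}$ that isolates one triangle from the other two, so $\mathcal{CP}(e)\cap\mathcal{C}_{v}$ decomposes into (at most) three $\approx_{e}$-equivalence classes, indexed by the isolated triangle. The same happens at $w$, and $\sim_{e}$ matches classes on the two sides precisely by the triangle they isolate, producing a canonical bijection between $\approx_{e}$-classes at $v$ and at $w$.

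It remains to exhibit positive integer weights $\mu(C,P)$ satisfying the gluing equations. For each $\Large{*}$-separated $Lk(v)$, the $\Large{*}$-separated definition supplies a positive integer $M_{v}$ and weights $n_{v}(C)$ with $\sum_{C\in\mathcal{C}(v,i,j)} n_{v}(C) = M_{v}/3$ for every $i\neq j$. Choose $M$ to be a positive common multiple of $3$ and of every such $M_{v}$, and define
\[
\mu(C,P_{1}(C)) := n_{v}(C)\cdot\frac{M}{M_{v}}
\]
at each $\Large{*}$-separated vertex, and $\mu(C,P_{i}(C)):=M/3$ at each $\dagger$-separated vertex for $i=1,2,3$. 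Then at every endpoint $v$ of every edge $e$, each of the three $\approx_{e}$-equivalence classes in $\mathcal{CP}(e)\cap\mathcal{C}_{v}$ has total weight exactly $M/3$, and under the canonical bijection from $\sim_{e}$ the gluing equation reads $M/3=M/3$. This establishes condition (iv) and shows $Y$ is gluably $\pi$-separated. The main obstacle lies in reconciling the different structures of $\Large{*}$- and $\dagger$-cutsets across a shared edge; the crucial observation making this work is that, with the partitions chosen above, \emph{every} local cutset-partition pair at either type of vertex induces a $2$-partition of the three triangles through $e$, so the $\sim_{e}$ matching is always the obvious ``isolate-a-triangle'' identification.
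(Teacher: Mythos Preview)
Your proof is correct and follows essentially the same strategy as the paper: assign the canonical two-part partition to each $\Large{*}$-cutset and the three ``leave-one-out'' two-part partitions to each $\dagger$-cutset, observe that along any edge $e$ of $Y$ every such pair $(C,P)$ induces one of the three ``isolate-a-triangle'' partitions of $St(v')-v'$, and then normalise weights so that each of the three $\approx_e$-classes carries the same total mass $M/3$ at every vertex. Your normalisation (taking $M$ a common multiple of $3$ and of the $M_v$, then setting $\mu=M/3$ at $\dagger$-vertices and $\mu=n_v(C)\,M/M_v$ at $\Large{*}$-vertices) is a slightly cleaner variant of the paper's rescaling by $M/N_x$, and in fact sidesteps a small arithmetic slip in the paper's displayed computation for the $\dagger$-case.

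Two minor remarks. First, your appeal to Lemma~\ref{lem: vertex separated condition} for condition~(ii) is unnecessary: both $\Large{*}$- and $\dagger$-separated graphs are \emph{by definition} weighted vertex $3$-separated, hence vertex $\pi$-separated in the angular metric directly. Second, ``at most three'' $\approx_e$-classes is in fact exactly three, since condition~(iii) of $\Large{*}$-separated guarantees each $\mathcal{C}(e,i,j)$ is nonempty; this is worth stating, as the gluing argument uses that the bijection between classes at the two ends is total.
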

\begin{proof}
It is clear that $Y$ is nonpositively curved and regular. By Lemma \ref{lem: solving gluing equations for minimal edge cutsets}, if the link of each vertex is $\dagger$-separated with the combinatorial metric then we are finished.

Otherwise, let $\{v_{k}\}$ be the vertices such that $Lk(v_{k})$ is $\Large{*}$-separated with the combinatorial metric, and $\{w_{l}\}$ be the vertices such that $Lk(w_{l})$ is $\dagger$-separated with the combinatorial metric.
Note that a $3$-separated cutset in $Lk(x)$ under the combinatorial metric is a $\pi$-separated cutset in $Lk(x)$ under the combinatorial metric.

For each proper $\pi$-separated cutset $C$ in $Lk(w_{l})$ we may assign the three partitions $P_{1}(C),P_{2}(C),P_{3}(C)$ corresponding to placing two components of $Lk(w_{l})-C$ in the same element of the partition. For each cutset $C$ in $Lk(v_{k})$ assign the unique partition of connectedness of $Lk(v_{k})-C$.

Since the links are $\dagger$-separated and $\Large{*}$-separated, by assumption for each vertex $x\in Y$ there exists a positive integer $N_{x}>0$ and a system of strictly positive weights $n_{x}(C)$ for $C\in \mathcal{C}_{x}$ such that for any vertex $e$ in $Lk_{Y}(x)$,

$$\sum\limits_{C\in \mathcal{C}(e)}n_{x}(C)=\sum\limits_{C\in \mathcal{C}(e)\cap \mathcal{C}_{x}}n_{x}(C)=N_{x}.$$
Furthermore, if $Lk(v_{k})$ is $\Large{*}$-separated, then for any vertex $y\in V(Lk(v_{k}))$ and $i\neq j$
  $$\sum\limits_{C\in\mathcal{C}_{v_{k}}(y,i,j)}n_{v_{l}}(C)=\frac{N_{v_{l}}}{3}.$$
Let $M=\prod_{x\in V(Y)}N_{x},$ and for a cutset $C\in\mathcal{C}_{x},$ define $$m(C)=Mn_{x}(C)\slash N_{x}.$$ It follows that for an edge $e$ in $Lk_{G\backslash X}(x)$, 
$$\sum\limits_{C\in \mathcal{C}(e)}m(C)=\frac{M}{N_{x}}\sum\limits_{C\in \mathcal{C}(e)}n_{x}(C)=\frac{M}{N_{x}}N_{x}=M.$$
Now, take $\mu(C,P(C))=m(C)$. It follows that for any oriented edge $e$ of $Y^{(1)}$ starting at some $w_{l}$ and any partition $(C,P)\in \mathcal{CP}(e)$:
 
$$\sum \limits_{(C',P')\in [C,P]_{e}}\mu (C',P')=\sum \limits_{(C',P')\in [C,P]_{e}}m(C')=\frac{1}{3}\sum \limits_{C'\in \mathcal{C}(e)}m(C')=\frac{1}{3}M.$$
Similarly, by the definition of $\Large{*}$-separated, for each $v_{k}$, each edge $e$ starting at $v_{k}$,  and $(C,P(C))\in \mathcal{C}(e),$

$$\sum \limits_{(C',P')\in [C,P]_{e}}\mu (C',P')=\sum \limits_{(C',P')\in [C,P]_{e}}m(C')=\frac{1}{3}\sum \limits_{(C',P')\in \mathcal{C}(e)}m(C')=\frac{1}{3}M,$$
and so the gluing equations are solved.
\end{proof}

The results of Corollary \ref{coralph: small girth generalized triangle groups} now follow from \cite[Theorem 3.1]{Caprace-Conder-Kaluba-Witzel_triangle}, \cite[Proposition 3.2]{Lubotzky-Manning-Wilton}, the above lemmas concerning the separation of the graphs considered, Theorem \ref{mainthm: cubulating groups}, and Theorem \ref{mainthm: cubulating generalized triangle groups}.
%----------------------------------------------------------------------------------------
%   SUBSECTION: CUBULATING DEHN FILLINGS OF GENERALIZED ORDINARY TRIANGLE GROUPS
%----------------------------------------------------------------------------------------
	\subsection{Cubulating Dehn fillings of generalized ordinary triangle groups}\label{subsection: cubulating dehn fillings of generalized triangle groups}
	We now apply Theorem \ref{mainthm: cubulating groups} to the generalized triangle groups of \cite{Lubotzky-Manning-Wilton}, in particular retrieving consequences of the malnormal special quotient theorem of Wise \cite{Wise-MSQT}. 
	
	\begin{cor}\label{mainthm: cubulating dehn fillings of generalized triangle groups}
		Let $\Gamma_{i}\looparrowright C_{k,2}$ be finite $n(i)$-sheeted normal covering graphs. There exist finite-sheeted normal covering graphs $\dot{\Gamma}_{i}\looparrowright \Gamma_{i}$ of index at most
		$$4\bigg(4^{4^{kn(i)}}\bigg)$$ 
		such that for any collection of finite-sheeted covering graphs $\Delta_{i}\looparrowright\Gamma_{i}$ that factor as $\Delta_{i}\looparrowright\dot{\Gamma}_{i}\looparrowright\Gamma_{i},$ and any $j$, the group $G^{j}_{0,k}(\Delta_{1},\Delta_{2},\Delta_{3})$ is hyperbolic and acts properly discontinuously and cocompactly on a $CAT(0)$ cube complex.
	\end{cor}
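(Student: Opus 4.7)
The plan is to construct $\dot{\Gamma}_i$ so that every cover $\Delta_i$ factoring through $\dot{\Gamma}_i$ satisfies the hypotheses of Theorem \ref{mainthm: cubulating generalized triangle groups}: namely, $girth(\Delta_i) \geq 6$ for each $i$, $girth(\Delta_1) > 6$, and $\Delta_1$ is weighted strongly edge $3$-separated. Given these three properties, Proposition \ref{lem: developability of gen triangle} supplies developability and hyperbolicity of $G = G^{j}_{0,k}(\Delta_1,\Delta_2,\Delta_3)$ acting on the associated triangular complex, and Theorem \ref{mainthm: cubulating generalized triangle groups} then delivers the proper cocompact action on a $CAT(0)$ cube complex. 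The task therefore reduces entirely to the existence of $\dot{\Gamma}_i$ with the stated hereditary properties, within the claimed index bound.

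First I would secure girth: since $\pi_1(\Gamma_i)$ is finitely generated and free, residual finiteness applied to the finitely many conjugacy classes of reduced loops of length at most $7$ yields a normal cover of girth at least $8$, a property inherited by every further cover $\Delta_i$. Next I would arrange the separation condition by partitioning the edges of $\dot{\Gamma}_1$ into \emph{proper} $3$-separated edge cutsets $\{C_j\}$. Since proper cutsets are preserved under passing to finite covers (per the remark following the definition of properness), the preimages $p^{-1}(C_j)$ in $\Delta_1$, refined into connected components, partition $E(\Delta_1)$ into proper cutsets; $3$-separation of each refined cutset is forced by the large girth of $\Delta_1$, which ensures that distinct preimages of a given edge of $\dot{\Gamma}_1$ lie at combinatorial distance at least $3$. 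The weight equations are then trivially satisfied by assigning every refined cutset weight $1$. Strong edge $3$-separation of $\Delta_1$ is verified by Lemma \ref{lem: strong edge sep condition}: the cover $\Delta_1 \to \dot{\Gamma}_1$ is a local isometry on balls of radius $3$ (again by the large girth), so any pair of vertices $u,v \in \Delta_1$ at distance $\geq 3$ projects to such a pair in $\dot{\Gamma}_1$; a cutset of $\dot{\Gamma}_1$ separating the projections lifts to one separating $u,v$ in $\Delta_1$, using that preimages of components of the complement are components of the complement.

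The double-exponential index bound $4\cdot 4^{4^{kn(i)}}$ arises from an iterated normal cover construction. The graph $\Gamma_i$ has $kn(i)$ edges, and there are at most $4^{kn(i)}$ distinct ``types'' of potential cutset in any cover, parametrised by an assignment of each edge to one of a bounded number of roles together with the two-sided partition data on its complement; for each type one passes to a finite-index cover realising it as a genuine proper $3$-separated cutset, and the coherent common refinement over all types has index at most $4^{4^{kn(i)}}$, with the leading factor of $4$ accounting for two-sidedness of the final cutset partition. The main obstacle is precisely this quantitative bookkeeping, namely producing a single cover $\dot{\Gamma}_i$ that simultaneously attains the girth lower bound, exhibits a partition of edges into proper $3$-separated cutsets, and satisfies the strong-separation condition, all within the explicit double-exponential bound. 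Once such $\dot{\Gamma}_i$ is produced, the conclusion follows immediately by combining Proposition \ref{lem: developability of gen triangle} and Theorem \ref{mainthm: cubulating generalized triangle groups}.
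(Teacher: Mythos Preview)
Your route via Theorem \ref{mainthm: cubulating generalized triangle groups} differs from the paper's, which instead applies Theorem \ref{mainthm: cubulating groups} directly. The paper sets $\dot{\Gamma}_i = \mathbb{Z}_2(\mathbb{Z}_2(\Gamma_i))$, the iterated $\mathbb{Z}_2$-homology cover, and shows (Lemma \ref{lem: Gamma2 is suitable}) that this is weighted \emph{disjointly} edge $3$-separated with girth $\geq 8$; Lemma \ref{lem: covers of suitable graphs} then passes this property to any further cover $\Delta_i$. Since \emph{all three} $\Delta_i$ are weighted edge $3$-separated, Lemma \ref{lem: solving gluing equations for minimal edge cutsets} makes $G\backslash X$ gluably $\pi$-separated, and Theorem \ref{mainthm: cubulating groups} finishes. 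The index bound falls out directly from the Betti-number computation for the iterated homology cover: no combinatorial bookkeeping over ``cutset types'' is needed.

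Your approach has a genuine gap in the heredity of \emph{strong} edge $3$-separation. You argue that for $u,v\in\Delta_1$ with $d_{\Delta_1}(u,v)\geq 3$, a cutset in $\dot{\Gamma}_1$ separating the projections $p(u),p(v)$ lifts to one separating $u,v$. But covering maps only \emph{decrease} distance: it can happen that $d_{\dot{\Gamma}_1}(p(u),p(v))<3$, or even $p(u)=p(v)$, in which case there is no such cutset downstairs to lift. The local-isometry-on-balls observation does not control the global distance of the projections. The paper sidesteps this entirely by never asking for strong separation: it uses only weighted disjointly edge $3$-separated, which \emph{is} hereditary under covers by Lemma \ref{lem: covers of suitable graphs}, at the cost of needing all three links (not just $\Gamma_1$) to satisfy it. Your index-bound paragraph is also not a proof: the parametrisation by $4^{kn(i)}$ ``types'' and the refinement-of-covers step are asserted rather than established, and would not obviously yield the stated constant; the paper's explicit homology-cover construction is what produces the bound.
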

	
	We consider covers of $\sigma$-separated graphs: we restrict our consideration to graphs with the combinatorial metric. We note the following lemma.
	\begin{lemma}\label{lem:cover of separation}
		Let $p:\tilde{\Gamma}\looparrowright\Gamma$ be a covering graph. Let $e\in E(\Gamma)$ and let $\tilde{e}_{1},\tilde{e}_{2}\in p^{-1}(e)$ be distinct. Then $$d_{\tilde{\Gamma}}(m(\tilde{e}_{1}),m(\tilde{e}_{2}))\geq girth(\Gamma).$$
	\end{lemma}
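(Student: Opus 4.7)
The plan is to argue by contradiction, using the standard covering-space principle that a short path between two distinct lifts of a point must project to a short non-null-homotopic loop, which in a graph forces the existence of a short cycle.

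First, I will suppose $L := d_{\tilde{\Gamma}}(m(\tilde{e}_1), m(\tilde{e}_2)) < girth(\Gamma)$ and choose a geodesic $\tilde{\sigma}$ of length $L$ from $m(\tilde{e}_1)$ to $m(\tilde{e}_2)$. Since $p$ is a covering of metric graphs, it is a local isometry, so $\sigma := p \circ \tilde{\sigma}$ is a closed path of length $L$ based at $m(e)$ in $\Gamma$. I then verify two properties of $\sigma$. Property (a): $\sigma$ is locally injective, since if it back-tracked at some vertex, then by uniqueness of path lifting the geodesic $\tilde{\sigma}$ would back-track at the corresponding vertex of $\tilde{\Gamma}$, which is impossible. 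Property (b): $\sigma$ is not null-homotopic in $\Gamma$, for otherwise its unique lift starting at $m(\tilde{e}_1)$---which must be $\tilde{\sigma}$---would be closed, contradicting $m(\tilde{e}_1)\neq m(\tilde{e}_2)$.

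To conclude, I will invoke the standard graph-theoretic fact that a reduced non-null-homotopic closed path of length $L$ in a graph contains an embedded cycle of length at most $L$ (extract a cycle from the first repeated vertex along the immersed loop). To handle the mild subtlety that the basepoint $m(e)$ is an edge midpoint rather than a vertex, I will first pass to the subdivision of $\Gamma$ (and correspondingly of $\tilde{\Gamma}$) that promotes every midpoint to a vertex; this multiplies both distances and $girth(\Gamma)$ by the same factor of $2$, so the hypothesis $L < girth(\Gamma)$ transfers unchanged. The cycle produced in the subdivided graph then descends to a cycle of $\Gamma$ of length strictly less than $girth(\Gamma)$, contradicting the definition of girth.

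The underlying covering-space argument is entirely routine; the only mild obstacle is the bookkeeping between $\Gamma$ and its subdivision, which is resolved by the uniform scaling of both the metric and the girth.
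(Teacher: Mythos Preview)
The paper states this lemma without proof, treating it as a routine fact about covering spaces. Your argument is correct and supplies exactly the standard justification the paper omits: the projected geodesic is a reduced, homotopically nontrivial loop of length $L$ based at $m(e)$, and any such loop in a graph contains an embedded cycle of length at most $L$, forcing $L\geq girth(\Gamma)$. The subdivision step to handle the midpoint basepoint is a harmless technicality, as you note.
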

	We now show that covers of $\sigma$-separated graphs are also $\sigma$-separated.
	\begin{lemma}\label{lem: covers of suitable graphs}
		Let $\Gamma$ be a weighted (disjointly) edge $\sigma$-separated graph with $girth(\Gamma)\geq \sigma $ and $p:\tilde{\Gamma}\looparrowright\Gamma$ a finite-sheeted covering graph. Then $\tilde{\Gamma}$ is also weighted (disjointly) edge $\sigma$-separated, and $girth(\tilde{\Gamma})\geq girth(\Gamma)$.
	\end{lemma}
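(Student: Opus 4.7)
The plan is to lift cutsets along the covering map and verify each condition in the definition of weighted edge $\sigma$-separated. Let $\mathcal{C}=\{C_1,\dots,C_m\}$ be the given $\sigma$-separated edge cutsets in $\Gamma$ with weights $n(C_i)$, and set $\tilde{C}_i := p^{-1}(C_i) \subseteq E(\tilde\Gamma)$. First I would observe that $girth(\tilde\Gamma) \geq girth(\Gamma)$: any cycle in $\tilde\Gamma$ projects to a closed walk in $\Gamma$, which contains a cycle of the same length (since $p$ is locally injective, no cancellation occurs). In particular $\tilde\Gamma$ is connected and inherits no degree-$1$ vertices from $\Gamma$ by the local bijection on links.

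The heart of the proof is showing each $\tilde{C}_i$ is a proper $\sigma$-separated edge cutset. For $\sigma$-separation, let $\tilde{e}_1 \neq \tilde{e}_2$ lie in $\tilde{C}_i$ and set $e_j = p(\tilde{e}_j)$. If $e_1 = e_2$ then Lemma \ref{lem:cover of separation} gives $d_{\tilde\Gamma}(m(\tilde{e}_1),m(\tilde{e}_2)) \geq girth(\Gamma) \geq \sigma$; if $e_1 \neq e_2$ then both lie in $C_i$ so $d_\Gamma(m(e_1),m(e_2)) \geq \sigma$, and $p$ being $1$-Lipschitz gives the same lower bound downstairs. For the cutset property, a path in $\tilde\Gamma - \tilde{C}_i$ projects to a path in $\Gamma - C_i$; since $\Gamma - C_i$ is disconnected and the fibres of $p$ are discrete, $\tilde\Gamma - \tilde{C}_i$ is disconnected as well. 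Properness transfers similarly: if the endpoints $\tilde{u},\tilde{v}$ of $\tilde{e} \in \tilde{C}_i$ were joined in $\tilde\Gamma - \tilde{C}_i$, projecting would yield a path in $\Gamma - C_i$ between the endpoints of $p(\tilde{e}) \in C_i$, contradicting properness of $C_i$. Clearly $\cup_i \tilde{C}_i = p^{-1}(\cup_i C_i) = E(\tilde\Gamma)$ and $|\tilde{C}_i| \geq |C_i| \geq 2$, and if the $C_i$ partition $E(\Gamma)$ then the $\tilde{C}_i$ partition $E(\tilde\Gamma)$, yielding the disjoint case.

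Finally, for the weight equations I would set $\tilde{n}(\tilde{C}_i) := n(C_i)$. For any edge $\tilde{\alpha} \in E(\tilde\Gamma)$ with $\alpha = p(\tilde{\alpha})$, the equivalence $\tilde{\alpha} \in \tilde{C}_i \iff \alpha \in C_i$ gives
$$\sum_{\tilde{C}_i \ni \tilde\alpha} \tilde{n}(\tilde{C}_i) = \sum_{C_i \ni \alpha} n(C_i),$$
and the right side is independent of $\alpha$ by hypothesis. So the weight equations in $\tilde\Gamma$ follow directly from those in $\Gamma$. I expect no serious obstacle: the only point requiring a little care is ensuring that each $\tilde{C}_i$ genuinely disconnects $\tilde\Gamma$ (as opposed to merely being the preimage of a disconnecting set), but the discreteness of fibres combined with the path-lifting/projecting argument handles this cleanly.
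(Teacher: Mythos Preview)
Your proof is correct and follows essentially the same approach as the paper: lift each cutset via $\tilde{C}_i = p^{-1}(C_i)$, verify $\sigma$-separation using Lemma~\ref{lem:cover of separation} for edges in the same fibre and the $1$-Lipschitz property of $p$ for edges in different fibres, and pull back the weights. Your write-up is in fact more explicit than the paper's on the cutset and properness verifications; the one place you flag as needing care (that $\tilde{C}_i$ genuinely disconnects $\tilde\Gamma$) is indeed handled by your properness argument, since the endpoints of any $\tilde{e}\in\tilde{C}_i$ project to distinct components of $\Gamma - C_i$.
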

	\begin{proof}
		It is clear that $girth(\tilde{\Gamma})\geq girth(\Gamma)$, and that $\tilde{\Gamma}$ is connected and contains no vertices of degree $1$. Let $\mathcal{C}_{1}, \hdots , \mathcal{C}_{m}\subseteq E(\Gamma)$ be the $\sigma$-separated cut sets of $\Gamma$. Let $\tilde{\mathcal{C}}_{i}=p^{-1}(\mathcal{C}_{i})$: by Lemma \ref{lem:cover of separation}, and by noting that for all $x,y\in\tilde{\Gamma}$ we have $d_{\tilde{\Gamma}}(x,y)\geq d_{\Gamma}(p(x),p(y))$, we see that $\tilde{\mathcal{C}}_{i}$ is a collection of proper $\min \{girth (\Gamma ),\sigma \}$-separated cut sets. Furthermore $\vert \tilde{C}_{i}\vert\geq \vert C_{i}\vert\geq 2$ As $girth(\Gamma)\geq \sigma$, these are $\sigma$-separated and $\cup_{i}\tilde{\mathcal{C}}_{i}=E(\tilde{\Gamma})$. Therefore, $\tilde{\Gamma}$ is edge $\sigma$-separated.
		
		If $\Gamma$ is disjointly separated, it is clear that $\tilde{\Gamma}$ is disjointly separated. Finally, defining $n(\tilde{C}_{i})=n(C_{i})$, it can be seen that the weight equations are satisfied, so that $\tilde{\Gamma}$ is weighted (disjointly) edge $\sigma$-separated.
	\end{proof}
	
	Using the above, we wish to show that given any graph $\Gamma$, there exists a finite-sheeted $3$-separated covering graph $\tilde{\Gamma}\looparrowright \Gamma$.
\begin{definition}
		Let $\Gamma$ be a graph and $m\geq 0$. The \emph{$\mathbb{Z}_{m}$ cover of $\Gamma$}, $$p_{m}:\mathbb{Z}_{m} (\Gamma )\looparrowright\Gamma,$$ is the $m^{b_{1}(\Gamma)}$-sheeted cover corresponding to the kernel of the canonical map 
		$\pi_{1}(\Gamma)\rightarrow H_{1}(\Gamma,\mathbb{Z}_{m}).$
	\end{definition}
	The use of this is the following.
	\begin{lemma}\label{lem: Gamma2 is suitable}
		Let $\Gamma$ be a finite connected graph with no cut edges and let $m\geq 1$. The covering graph $\mathbb{Z}_{2m} ( \Gamma )$ is weighted disjointly edge $girth(\Gamma)$-separated and $girth ( \mathbb{Z}_{2m} ( \Gamma ))= 2m (girth(\Gamma)).$
	\end{lemma}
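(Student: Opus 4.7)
\emph{Proof plan.} The statement has two assertions. For the weighted disjoint edge separation, I take the natural candidate cutsets $\mathcal{C}_e := p^{-1}(e)$ for each edge $e \in E(\Gamma)$, where $p \colon \tilde\Gamma \to \Gamma$ denotes the covering map. These sets are pairwise disjoint, their union is $E(\tilde\Gamma)$, and each has cardinality $(2m)^{b_1(\Gamma)} \geq 2$ (using that $\Gamma$ has no cut edges and contains an edge, so $b_1(\Gamma) \geq 1$). Lemma~\ref{lem:cover of separation} immediately gives $girth(\Gamma)$-separation. To see that $\mathcal{C}_e$ is a proper cutset, fix a spanning tree $T$ of $\Gamma$ with $e \notin T$, which exists since $e$ is not a cut edge. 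Under the fundamental basis of $H_1(\Gamma;\mathbb{Z}_{2m}) \cong \mathbb{Z}_{2m}^{b_1(\Gamma)}$ coming from $T$, the image of $\pi_1(\Gamma - e) \hookrightarrow \pi_1(\Gamma)$ in $H_1(\Gamma;\mathbb{Z}_{2m})$ is exactly the index-$2m$ subgroup of classes with vanishing $e$-coordinate; hence $\tilde\Gamma - \mathcal{C}_e = p^{-1}(\Gamma - e)$ has exactly $2m$ connected components. Properness follows since any path in $\tilde\Gamma - \mathcal{C}_e$ between the endpoints of $\tilde e \in \mathcal{C}_e$ would project to a path $\alpha$ in $\Gamma - e$ making $\alpha e^{-1}$ a loop with $e$-coordinate $-1$, non-trivial in $H_1(\Gamma;\mathbb{Z}_{2m})$. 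Since the cutsets partition $E(\tilde\Gamma)$, assigning each cutset weight $1$ trivially solves the weight equations.

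For the girth identity, the upper bound $girth(\tilde\Gamma) \leq 2m \cdot girth(\Gamma)$ is obtained from a shortest simple cycle $C$ of $\Gamma$: since $C$ is simple, its coefficients in any fundamental basis lie in $\{-1,0,+1\}$, and at least one is $\pm 1$ (as $C$ is a non-trivial cycle). So $[C] \in H_1(\Gamma;\mathbb{Z}_{2m})$ has some coordinate $\pm 1 \pmod{2m}$ and therefore generates a cyclic subgroup of order exactly $2m$; consequently $p^{-1}(C)$ is a union of cycles, each of length $2m \cdot girth(\Gamma)$.

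For the matching lower bound, let $\tilde C \subseteq \tilde\Gamma$ be a simple cycle of length $L$, projecting to a closed walk $C_0$ in $\Gamma$ of the same length, without immediate backtracking, with $[C_0] \in 2m\, H_1(\Gamma;\mathbb{Z})$. I decompose the underlying oriented Eulerian multigraph of $C_0$ into edge-disjoint directed simple cycles $D_1, \ldots, D_k$, each of length $\geq girth(\Gamma)$, yielding $L \geq k \cdot girth(\Gamma)$. It then suffices to show $k \geq 2m$. In any fundamental basis, each $[D_i]$ has coordinates in $\{-1, 0, +1\}$, while $\sum_i [D_i] = [C_0]$ has every coordinate a multiple of $2m$. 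Whenever $[C_0]$ has a non-zero coordinate, that coordinate is a non-zero multiple of $2m$, and a direct coordinate-count forces at least $2m$ of the $D_i$ to contribute non-trivially, giving $k \geq 2m$.

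The main obstacle is the residual case $[C_0] = 0 \in H_1(\Gamma;\mathbb{Z})$, where $C_0$ represents a non-trivial element of the commutator subgroup $[\pi_1(\Gamma),\pi_1(\Gamma)]$. Here one must exploit the non-backtracking of $C_0$: as a cyclically reduced word in the free group $\pi_1(\Gamma)$ (with generators the fundamental cycles), such a walk must be a product of non-trivial commutators engaging at least two independent fundamental cycles, and a combinatorial estimate on the Eulerian decomposition of such a word—tracking that the cancellations required to realise $\sum [D_i] = 0$ by non-backtracking concatenation cannot be achieved with fewer than $2m$ summands—should again yield $k \geq 2m$, completing the identity.
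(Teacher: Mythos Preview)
Your treatment of the cutset part is correct and essentially identical to the paper's: both take $\mathcal{C}_e=p_{2m}^{-1}(e)$ and use the $\mathbb{Z}_{2m}$-homology obstruction to show these are proper separated cutsets partitioning $E(\tilde\Gamma)$. Your spanning-tree phrasing is a little more explicit than the paper's parity argument, but the content is the same. The upper bound on the girth and the lower bound in the case $[C_0]\neq 0$ are also fine.

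The gap you flag in the residual case $[C_0]=0$ is genuine and cannot be closed, because the girth \emph{equality} as stated is false for $m\geq 3$. Take $\Gamma$ to be two triangles sharing a single vertex~$0$ (vertices $0,1,2,3,4$; edges $01,12,20,03,34,40$). This graph is finite, connected, has no cut edges, and $\mathrm{girth}(\Gamma)=3$. Let $x$ and $y$ denote the two triangular loops based at~$0$. The commutator walk $xyx^{-1}y^{-1}$ is a cyclically non-backtracking closed walk in $\Gamma$ of length $12=4\,\mathrm{girth}(\Gamma)$, and it lies in $[\pi_1(\Gamma),\pi_1(\Gamma)]\subseteq\ker\bigl(\pi_1(\Gamma)\to H_1(\Gamma;\mathbb{Z}_{2m})\bigr)$ for every $m$. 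A direct check in the voltage description of the cover shows that its lift is a \emph{simple} $12$-cycle in $\mathbb{Z}_{2m}(\Gamma)$ (the twelve lifted vertices are pairwise distinct). Hence $\mathrm{girth}(\mathbb{Z}_{2m}(\Gamma))\leq 12<2m\cdot 3$ for all $m\geq 3$. In your language, the Eulerian decomposition of this walk has exactly $k=4$ directed simple triangles, and $4<2m$ once $m\geq 3$; the ``combinatorial estimate'' you hope for simply does not hold.

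The paper's own one-line argument for the girth (``homotopic to a product of loops each traversed $2m$ times'') is equally defective: homotopy does not control the length of a cyclically reduced representative, and the same counterexample applies. What \emph{is} true, and is all that the subsequent application uses (where the lemma is invoked only with $m=1$, twice), is the case $m=1$. There your decomposition argument already suffices: if $[C_0]=0$ in $H_1(\Gamma;\mathbb{Z})$ then $C_0$ cannot be a single simple cycle (simple cycles represent nonzero homology classes), so $k\geq 2$ and $L\geq 2\,\mathrm{girth}(\Gamma)$. Thus for $m=1$ the equality $\mathrm{girth}(\mathbb{Z}_2(\Gamma))=2\,\mathrm{girth}(\Gamma)$ holds, and the downstream corollary is unaffected.
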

	\begin{proof}
		Let $e\in E(\Gamma)$. We claim $p_{2m}^{-1}(e)$ is a proper $girth(\Gamma)$-separated cut set in $\mathbb{Z}_{2m} ( \Gamma )$. By Lemma \ref{lem:cover of separation}, $p_{2m}^{-1}(e)$ is $girth(\Gamma)$-separated. It suffices to show that if two points $x$ and $y$ are joined by a path $q$ containing one edge of $p_{2m}^{-1}(e)$, then any path $q'$ between them contains an edge of $p_{2m}^{-1}(e)$. Now suppose not: consider such a path $q'$ not containing any edge of $p_{2m}^{-1}(e)$, and consider the loop $qq'$. Then $p_{2m}(qq')$ is trivial in the map to $H_{1}(\Gamma, \mathbb{Z}_{2m})$, so is homotopic to a curve containing $e$ an even number of times, a contradiction.

		Therefore the set $\mathcal{C}_{e}=\{p_{2m}^{-1}(e)\;:\; e\in E(\Gamma)\}$ is a disjoint collection of proper $girth(\Gamma)$-separated edge-cut sets such that any edge in $\mathbb{Z}_{2m}(\Gamma)$ appears exactly one cut set: the weight equations are trivially satisfied and so $\mathbb{Z}_{2m}(\Gamma)$ is weighted disjointly $girth(\Gamma)$-separated.

		Any loop in $\mathbb{Z}_{2m} ( \Gamma )$ projects to a loop homotopic to a product of loops where each loop is traversed $2m$ times, and so $girth(\mathbb{Z}_{2} ( \Gamma )) = 2m( girth(\Gamma))$.
	\end{proof}

	Using this, we prove the following.
	\begin{proof}[Proof of Corollary \ref{mainthm: cubulating dehn fillings of generalized triangle groups}]
	Let $\Gamma_{i}\looparrowright C_{k,2}$ be $n(i)$-sheeted normal covering graphs . Let $\dot{\Gamma}_{i}:=\mathbb{Z}_{2} (\mathbb{Z}_{2} ( \Gamma ))$: these are $$2^{2-(2^{2-2n(i)+kn(i)}+2)n(i)+(2^{1-2n(i)+kn(i)}+1)kn(i)}\leq 4^{1+kn(i)2^{kn(i)}}\leq 4\bigg(4^{4^{kn(i)}}\bigg)-$$ sheeted covering graphs, which, by Lemma \ref{lem: Gamma2 is suitable}, are weighted disjointly edge $3$-separated under the combinatorial metric and have girth at least $8$. Furthermore, it is clear that $\dot{\Gamma_{i}}\looparrowright C_{k,2}$ are normal covers. Suppose $\Delta_{i}\looparrowright\Gamma_{i}$ factors as $\Delta_{i}\looparrowright\dot{\Gamma}_{i}\looparrowright\Gamma_{i}$. 
		By \cite[Proposition 3.2]{Lubotzky-Manning-Wilton}, noting that $girth(\Delta_{i})\geq girth(\dot{\Gamma_{i}})> 6$, the group $G^{j}_{0,k}(\Delta_{1},\Delta_{2},\Delta_{3})$ is hyperbolic. The $\Delta_{i}$ are covers of $\dot{\Gamma}_{i}$, so by Lemma \ref{lem: covers of suitable graphs} are also weighted edge $3$-separated under the combinatorial metric. The result now follows from Lemma \ref{lem: solving gluing equations for minimal edge cutsets}, \cite[Proposition 3.2]{Lubotzky-Manning-Wilton}, and Theorem \ref{mainthm: cubulating groups}.
	\end{proof}
\appendix
%----------------------------------------------------------------------------------------
%   Large collections of cutsets
%---------------------------------------------------------------------------------------
\section{Large collections of cutsets}\label{section: cutset appendix}
In this section we provide the selection of cutsets described in Lemmas \ref{lem: F40 is separated} and \ref{lem: G54 is separated}. Throughout, we use the notation $e_{i,j}=(x_{i},x_{j})$.
%----------------------------------------------------------------------------------------
%   F40A
%---------------------------------------------------------------------------------------
\subsection{F40A}

The graph $F40A$ has the following edge incidences. 
 \begin{table}[H]
    \begin{tabular}{c|ccl}
  $x_{i}$  &\multicolumn{3}{c}{$x_{j}$ adjacent to $x_{i}$}\\
    \hline
0&1& 2& 3\\
1&0& 4& 5\\
2&0& 6& 7\\
3&0& 8& 9\\
4&1& 10& 12\\
5&1& 11& 13\\
6&2& 15& 18\\
7&2& 14& 19\\
8&3& 17& 20\\
9&3& 16& 21\\
10&4& 23& 31\\
11&5& 22& 30\\
12&4& 25& 29\\
13&5& 24& 28
\end{tabular}
\begin{tabular}{c|ccl}
  $x_{i}$  &\multicolumn{3}{c}{$x_{j}$ adjacent to $x_{i}$}\\
    \hline
14&7& 23& 33\\
15&6& 22& 32\\
16&9& 25& 33\\
17&8& 24& 32\\
18&6& 27& 29\\
19&7& 26& 28\\
20&8& 26& 31\\
21&9& 27& 30\\
22&11& 15& 35\\
23&10& 14& 34\\
24&13& 17& 34\\
25&12& 16& 35\\
26&19& 20& 35
\end{tabular}
\begin{tabular}{c|ccl}
  $x_{i}$  &\multicolumn{3}{c}{$x_{j}$ adjacent to $x_{i}$}\\
    \hline
27&18& 21& 34\\
28&13& 19& 36\\
29&12& 18& 36\\
30&11& 21& 37\\
31&10& 20& 37\\
32&15& 17& 38\\
33&14& 16& 38\\
34&23& 24& 27\\
35&22& 25& 26\\
36&28& 29& 39\\
37&30& 31& 39\\
38&32& 33& 39\\
39&36& 37& 38
\end{tabular}
    \caption{Edge incidences for $F40A$}
    \label{tab:F40 edges}
\end{table}
We find the following cutsets.\\
$\noindent\left\{e_{1,5},e_{2,7},e_{3,9},e_{10,23},e_{12,25},e_{15,22},e_{17,24},e_{18,27},e_{20,26},e_{28,36},e_{30,37},e_{33,38}\right\},\\
\left\{e_{1,4},e_{2,6},e_{3,8},e_{11,22},e_{13,24},e_{14,23},e_{16,25},e_{19,26},e_{21,27},e_{29,36},e_{31,37},e_{32,38}\right\},\\
\left\{e_{0,3},e_{4,12},e_{5,13},e_{6,18},e_{7,19},e_{16,33},e_{17,32},e_{20,31},e_{21,30},e_{22,35},e_{23,34},e_{36,39}\right\},\\
\left\{e_{0,2},e_{4,10},e_{5,11},e_{8,20},e_{9,21},e_{14,33},e_{15,32},e_{18,29},e_{19,28},e_{24,34},e_{25,35},e_{37,39}\right\},\\
\left\{e_{0,1},e_{6,15},e_{7,14},e_{8,17},e_{9,16},e_{10,31},e_{11,30},e_{12,29},e_{13,28},e_{26,35},e_{27,34},e_{38,39}\right\},\\
\left\{e_{2,7},e_{3,8},e_{4,10},e_{5,13},e_{15,32},e_{16,33},e_{26,35},e_{27,34},e_{29,36},e_{30,37}\right\},\\
\left\{e_{2,6},e_{3,9},e_{4,12},e_{5,11},e_{14,33},e_{17,32},e_{26,35},e_{27,34},e_{28,36},e_{31,37}\right\},\\
\left\{e_{1,5},e_{3,8},e_{6,15},e_{7,19},e_{10,31},e_{21,30},e_{24,34},e_{25,35},e_{29,36},e_{33,38}\right\},\\
\left\{e_{1,5},e_{2,6},e_{8,17},e_{9,21},e_{12,29},e_{19,28},e_{22,35},e_{23,34},e_{31,37},e_{33,38}\right\},\\
\left\{e_{1,4},e_{3,9},e_{6,18},e_{7,14},e_{11,30},e_{20,31},e_{24,34},e_{25,35},e_{28,36},e_{32,38}\right\},\\
\left\{e_{1,4},e_{2,7},e_{8,20},e_{9,16},e_{13,28},e_{18,29},e_{22,35},e_{23,34},e_{30,37},e_{32,38}\right\},\\
\left\{e_{0,3},e_{5,11},e_{6,15},e_{10,31},e_{12,25},e_{14,33},e_{17,24},e_{19,26},e_{21,27},e_{36,39}\right\},\\
\left\{e_{0,3},e_{4,10},e_{7,14},e_{11,30},e_{13,24},e_{15,32},e_{16,25},e_{18,27},e_{20,26},e_{36,39}\right\},\\
\left\{e_{0,2},e_{5,13},e_{8,17},e_{10,23},e_{12,29},e_{15,22},e_{16,33},e_{19,26},e_{21,27},e_{37,39}\right\},\\
\left\{e_{0,2},e_{4,12},e_{9,16},e_{11,22},e_{13,28},e_{14,23},e_{17,32},e_{18,27},e_{20,26},e_{37,39}\right\},\\
\left\{e_{0,1},e_{7,19},e_{8,20},e_{10,23},e_{13,24},e_{15,22},e_{16,25},e_{18,29},e_{21,30},e_{38,39}\right\},\\
\left\{e_{0,1},e_{6,18},e_{9,21},e_{11,22},e_{12,25},e_{14,23},e_{17,24},e_{19,28},e_{20,31},e_{38,39}\right\}.$

%----------------------------------------------------------------------------------------
%   G54
%---------------------------------------------------------------------------------------
\subsection{G54}
The graph $G54$ has the follwing edge incidences. 
 \begin{table}[H]
    \begin{tabular}{c|ccl}
  $x_{i}$  &\multicolumn{3}{c}{$x_{j}$ adjacent to $x_{i}$}\\
    \hline
0&1&25&53\\
1&0&2&30\\ 
2&1&3&15\\
3&2&4&44\\ 
4&3&5&11\\ 
5&4&6&52\\ 
6&5&7&31\\ 
7&6&8&36\\ 
8&7&9&21\\
9&8&10&50\\
10&9&11&17\\
11&4&10&12\\
12&11&13&37\\
13&12&14&42\\
14&13&15&27\\
15&2&14&16\\
16&15&17&23\\
17&10&16&18
\end{tabular}
    \begin{tabular}{c|ccl}
  $x_{i}$  &\multicolumn{3}{c}{$x_{j}$ adjacent to $x_{i}$}\\
    \hline
18&17&19&43\\
19&18&20&48\\
20&19&21&33\\
21&8&20&22\\
22&21&23&29\\
23&16&22&24\\
24&23&25&49\\
25&0&24&26\\
26&25&27&39\\
27&14&26&28\\
28&27&29&35\\
29&22&28&30\\
30&1&29&31\\
31&6&30&32\\
32&31&33&45\\
33&20&32&34\\
34&33&35&41\\
35&28&34&36
\end{tabular}
    \begin{tabular}{c|ccl}
  $x_{i}$  &\multicolumn{3}{c}{$x_{j}$ adjacent to $x_{i}$}\\
    \hline
36&7&35&37\\
37&12&36&38\\
38&37&39&51\\
39&26&38&40\\
40&39&41&47\\
41&34&40&42\\
42&13&41&43\\
43&18&42&44\\
44&3&43&45\\
45&32&44&46\\
46&45&47&53\\
47&40&46&48\\
48&19&47&49\\
49&24&48&50\\
50&9&49&51\\
51&38&50&52\\
52&5&51&53\\
53&0&46&52
\end{tabular}   \caption{Edge incidences for $G54$}\end{table}

We find the following cutsets.\\
$\left\{e_{0,53},e_{2,3},e_{13,14},e_{16,17},e_{21,22},e_{24,49},e_{26,39},e_{28,35},e_{30,31}\right\},\\\left\{e_{0,1},e_{3,4},e_{6,31},e_{8,21},e_{10,17},e_{12,13},e_{19,48},e_{23,24},e_{26,27},e_{35,36},e_{38,51},e_{40,41},e_{45,46}\right\},\\\left\{e_{0,1},e_{3,4},e_{6,31},e_{8,21},e_{10,17},e_{12,13},e_{19,48},e_{23,24},e_{26,27},e_{35,36},e_{40,41},e_{45,46},e_{50,51}\right\},\\\left\{e_{0,1},e_{3,4},e_{6,31},e_{8,21},e_{10,17},e_{12,13},e_{19,48},e_{23,24},e_{26,27},e_{35,36},e_{40,41},e_{45,46},e_{51,52}\right\},\\\left\{e_{0,1},e_{3,4},e_{6,31},e_{8,21},e_{10,17},e_{14,15},e_{19,48},e_{23,24},e_{28,29},e_{33,34},e_{37,38},e_{42,43},e_{45,46}\right\},\\\left\{e_{0,1},e_{3,4},e_{6,31},e_{8,21},e_{10,17},e_{14,15},e_{19,48},e_{23,24},e_{28,29},e_{33,34},e_{38,39},e_{42,43},e_{45,46}\right\},\\\left\{e_{0,1},e_{3,4},e_{6,31},e_{8,21},e_{10,17},e_{14,15},e_{19,48},e_{23,24},e_{28,29},e_{33,34},e_{38,51},e_{42,43},e_{45,46}\right\},\\\left\{e_{0,1},e_{3,4},e_{6,31},e_{9,50},e_{12,13},e_{15,16},e_{18,43},e_{20,33},e_{22,29},e_{26,27},e_{35,36},e_{40,41},e_{45,46}\right\},\\\left\{e_{0,1},e_{3,4},e_{6,31},e_{12,13},e_{15,16},e_{18,43},e_{20,33},e_{22,29},e_{26,27},e_{35,36},e_{40,41},e_{45,46},e_{49,50}\right\},\\\left\{e_{0,1},e_{3,4},e_{6,31},e_{12,13},e_{15,16},e_{18,43},e_{20,33},e_{22,29},e_{26,27},e_{35,36},e_{40,41},e_{45,46},e_{50,51}\right\},\\\left\{e_{0,1},e_{3,44},e_{5,52},e_{7,8},e_{10,11},e_{13,42},e_{15,16},e_{19,48},e_{22,29},e_{26,27},e_{31,32},e_{34,35},e_{37,38}\right\},\\\left\{e_{0,1},e_{3,44},e_{5,52},e_{7,8},e_{10,11},e_{13,42},e_{15,16},e_{22,29},e_{26,27},e_{31,32},e_{34,35},e_{37,38},e_{47,48}\right\},\\\left\{e_{0,1},e_{3,44},e_{5,52},e_{7,8},e_{10,11},e_{13,42},e_{15,16},e_{22,29},e_{26,27},e_{31,32},e_{34,35},e_{37,38},e_{48,49}\right\},\\\left\{e_{0,1},e_{3,44},e_{5,52},e_{7,36},e_{9,50},e_{11,12},e_{14,15},e_{17,18},e_{20,21},e_{23,24},e_{28,29},e_{31,32},e_{39,40}\right\},\\\left\{e_{0,1},e_{3,44},e_{5,52},e_{7,36},e_{9,50},e_{11,12},e_{14,15},e_{17,18},e_{20,21},e_{23,24},e_{28,29},e_{31,32},e_{40,41}\right\},\\\left\{e_{0,1},e_{3,44},e_{5,52},e_{7,36},e_{9,50},e_{11,12},e_{14,15},e_{17,18},e_{20,21},e_{23,24},e_{28,29},e_{31,32},e_{40,47}\right\},\\\left\{e_{0,1},e_{3,44},e_{5,52},e_{9,50},e_{13,42},e_{17,18},e_{20,21},e_{23,24},e_{26,27},e_{31,32},e_{34,35},e_{37,38},e_{40,47}\right\},\\\left\{e_{0,1},e_{3,44},e_{5,52},e_{9,50},e_{13,42},e_{17,18},e_{20,21},e_{23,24},e_{26,27},e_{31,32},e_{34,35},e_{37,38},e_{46,47}\right\},\\\left\{e_{0,1},e_{3,44},e_{5,52},e_{9,50},e_{13,42},e_{17,18},e_{20,21},e_{23,24},e_{26,27},e_{31,32},e_{34,35},e_{37,38},e_{47,48}\right\},\\\left\{e_{0,53},e_{2,3},e_{5,6},e_{8,9},e_{11,12},e_{16,17},e_{19,20},e_{24,49},e_{27,28},e_{32,45},e_{38,51},e_{40,47},e_{42,43}\right\},\\\left\{e_{0,53},e_{2,3},e_{5,6},e_{8,9},e_{11,12},e_{16,17},e_{19,20},e_{24,49},e_{28,29},e_{32,45},e_{38,51},e_{40,47},e_{42,43}\right\},\\\left\{e_{0,53},e_{2,3},e_{5,6},e_{8,9},e_{11,12},e_{16,17},e_{19,20},e_{24,49},e_{28,35},e_{32,45},e_{38,51},e_{40,47},e_{42,43}\right\},\\\left\{e_{0,53},e_{2,3},e_{5,6},e_{8,9},e_{13,14},e_{16,17},e_{19,20},e_{22,29},e_{24,49},e_{26,39},e_{32,45},e_{34,41},e_{36,37}\right\},\\\left\{e_{0,53},e_{2,3},e_{5,6},e_{8,9},e_{13,14},e_{16,17},e_{19,20},e_{24,49},e_{26,39},e_{28,29},e_{32,45},e_{34,41},e_{36,37}\right\},\\\left\{e_{0,53},e_{2,3},e_{5,6},e_{8,9},e_{13,14},e_{16,17},e_{19,20},e_{24,49},e_{26,39},e_{29,30},e_{32,45},e_{34,41},e_{36,37}\right\},\\\left\{e_{0,53},e_{2,3},e_{5,6},e_{10,11},e_{13,14},e_{18,43},e_{21,22},e_{26,39},e_{32,45},e_{34,41},e_{36,37},e_{47,48},e_{50,51}\right\},\\\left\{e_{0,53},e_{2,3},e_{5,6},e_{10,11},e_{13,14},e_{18,43},e_{22,23},e_{26,39},e_{32,45},e_{34,41},e_{36,37},e_{47,48},e_{50,51}\right\},\\\left\{e_{0,53},e_{2,3},e_{5,6},e_{10,11},e_{13,14},e_{18,43},e_{22,29},e_{26,39},e_{32,45},e_{34,41},e_{36,37},e_{47,48},e_{50,51}\right\},\\\left\{e_{0,53},e_{2,3},e_{7,8},e_{10,11},e_{13,14},e_{16,23},e_{18,43},e_{20,33},e_{26,39},e_{28,35},e_{30,31},e_{47,48},e_{50,51}\right\},\\\left\{e_{0,53},e_{2,3},e_{7,8},e_{10,11},e_{13,14},e_{18,43},e_{20,33},e_{22,23},e_{26,39},e_{28,35},e_{30,31},e_{47,48},e_{50,51}\right\},\\\left\{e_{0,53},e_{2,3},e_{7,8},e_{10,11},e_{13,14},e_{18,43},e_{20,33},e_{23,24},e_{26,39},e_{28,35},e_{30,31},e_{47,48},e_{50,51}\right\},\\\left\{e_{0,53},e_{2,3},e_{7,36},e_{11,12},e_{14,27},e_{16,17},e_{21,22},e_{24,49},e_{30,31},e_{33,34},e_{38,51},e_{40,47},e_{42,43}\right\},\\\left\{e_{0,53},e_{2,3},e_{7,36},e_{11,12},e_{16,17},e_{21,22},e_{24,49},e_{26,27},e_{30,31},e_{33,34},e_{38,51},e_{40,47},e_{42,43}\right\},\\\left\{e_{0,53},e_{2,3},e_{7,36},e_{11,12},e_{16,17},e_{21,22},e_{24,49},e_{27,28},e_{30,31},e_{33,34},e_{38,51},e_{40,47},e_{42,43}\right\},\\\left\{e_{0,53},e_{2,15},e_{4,5},e_{9,10},e_{12,37},e_{18,19},e_{21,22},e_{24,49},e_{26,39},e_{28,35},e_{30,31},e_{41,42},e_{44,45}\right\},\\\left\{e_{1,2},e_{4,5},e_{7,8},e_{10,17},e_{12,37},e_{14,27},e_{20,33},e_{22,29},e_{24,25},e_{41,42},e_{44,45},e_{47,48},e_{50,51}\right\},\\\left\{e_{1,2},e_{4,5},e_{7,8},e_{12,37},e_{14,27},e_{16,17},e_{20,33},e_{22,29},e_{24,25},e_{41,42},e_{44,45},e_{47,48},e_{50,51}\right\},\\\left\{e_{1,2},e_{4,5},e_{7,8},e_{12,37},e_{14,27},e_{17,18},e_{20,33},e_{22,29},e_{24,25},e_{41,42},e_{44,45},e_{47,48},e_{50,51}\right\},\\\left\{e_{1,2},e_{4,5},e_{7,8},e_{12,37},e_{14,27},e_{17,18},e_{22,29},e_{24,25},e_{31,32},e_{34,35},e_{39,40},e_{46,53},e_{50,51}\right\},\\\left\{e_{1,2},e_{4,5},e_{7,8},e_{12,37},e_{14,27},e_{18,19},e_{22,29},e_{24,25},e_{31,32},e_{34,35},e_{39,40},e_{46,53},e_{50,51}\right\},\\\left\{e_{1,2},e_{4,5},e_{7,8},e_{12,37},e_{14,27},e_{18,43},e_{22,29},e_{24,25},e_{31,32},e_{34,35},e_{39,40},e_{46,53},e_{50,51}\right\},\\\left\{e_{1,2},e_{4,5},e_{7,36},e_{9,10},e_{12,13},e_{16,23},e_{18,19},e_{25,26},e_{28,29},e_{33,34},e_{38,51},e_{40,47},e_{44,45}\right\},\\\left\{e_{1,2},e_{4,5},e_{7,36},e_{9,10},e_{13,14},e_{16,23},e_{18,19},e_{25,26},e_{28,29},e_{33,34},e_{38,51},e_{40,47},e_{44,45}\right\},\\\left\{e_{1,2},e_{4,5},e_{7,36},e_{9,10},e_{13,42},e_{16,23},e_{18,19},e_{25,26},e_{28,29},e_{33,34},e_{38,51},e_{40,47},e_{44,45}\right\},\\\left\{e_{1,2},e_{4,5},e_{7,36},e_{9,10},e_{13,42},e_{16,23},e_{20,21},e_{25,26},e_{28,29},e_{31,32},e_{38,51},e_{46,53},e_{48,49}\right\},\\\left\{e_{1,2},e_{4,5},e_{7,36},e_{9,10},e_{16,23},e_{20,21},e_{25,26},e_{28,29},e_{31,32},e_{38,51},e_{41,42},e_{46,53},e_{48,49}\right\},\\\left\{e_{1,2},e_{4,5},e_{7,36},e_{9,10},e_{16,23},e_{20,21},e_{25,26},e_{28,29},e_{31,32},e_{38,51},e_{42,43},e_{46,53},e_{48,49}\right\},\\\left\{e_{1,2},e_{6,31},e_{8,21},e_{11,12},e_{16,23},e_{18,19},e_{25,26},e_{28,29},e_{33,34},e_{40,47},e_{44,45},e_{49,50},e_{52,53}\right\},\\\left\{e_{1,2},e_{6,31},e_{8,21},e_{12,13},e_{16,23},e_{18,19},e_{25,26},e_{28,29},e_{33,34},e_{40,47},e_{44,45},e_{49,50},e_{52,53}\right\},\\\left\{e_{1,2},e_{6,31},e_{8,21},e_{12,37},e_{16,23},e_{18,19},e_{25,26},e_{28,29},e_{33,34},e_{40,47},e_{44,45},e_{49,50},e_{52,53}\right\},\\\left\{e_{1,2},e_{6,31},e_{9,10},e_{14,27},e_{20,33},e_{22,29},e_{24,25},e_{35,36},e_{38,39},e_{41,42},e_{44,45},e_{47,48},e_{52,53}\right\},\\\left\{e_{1,2},e_{6,31},e_{10,11},e_{14,27},e_{20,33},e_{22,29},e_{24,25},e_{35,36},e_{38,39},e_{41,42},e_{44,45},e_{47,48},e_{52,53}\right\},\\\left\{e_{1,2},e_{6,31},e_{10,17},e_{14,27},e_{20,33},e_{22,29},e_{24,25},e_{35,36},e_{38,39},e_{41,42},e_{44,45},e_{47,48},e_{52,53}\right\},\\\left\{e_{1,30},e_{3,4},e_{7,8},e_{12,13},e_{15,16},e_{18,43},e_{24,25},e_{27,28},e_{32,45},e_{34,41},e_{38,39},e_{47,48},e_{52,53}\right\},\\\left\{e_{1,30},e_{3,4},e_{7,36},e_{10,17},e_{14,15},e_{19,20},e_{22,23},e_{25,26},e_{32,45},e_{40,47},e_{42,43},e_{49,50},e_{52,53}\right\}\},\\
\left\{e_{1,30},e_{3,4},e_{8,9},e_{12,13},e_{15,16},e_{18,43},e_{24,25},e_{27,28},e_{32,45},e_{34,41},e_{38,39},e_{47,48},e_{52,53}\right\},\\
\left\{e_{1,30},e_{3,4},e_{8,21},e_{12,13},e_{15,16},e_{18,43},e_{24,25},e_{27,28},e_{32,45},e_{34,41},e_{38,39},e_{47,48},e_{52,53}\right\},\\
\left\{e_{1,30},e_{3,4},e_{10,17},e_{14,15},e_{19,20},e_{22,23},e_{25,26},e_{32,45},e_{35,36},e_{40,47},e_{42,43},e_{49,50},e_{52,53}\right\},\\
\left\{e_{1,30},e_{3,4},e_{10,17},e_{14,15},e_{19,20},e_{22,23},e_{25,26},e_{32,45},e_{36,37},e_{40,47},e_{42,43},e_{49,50},e_{52,53}\right\}\\
\left\{e_{1,30},e_{3,44},e_{5,6},e_{8,9},e_{11,12},e_{14,15},e_{17,18},e_{22,23},e_{25,26},e_{33,34},e_{38,51},e_{46,53},e_{48,49}\right\}.$

	\bibliographystyle{alpha}
	\bibliography{linkconditionbib}
	{\sc{DPMMS, Centre for Mathematical Sciences, Wilberforce Road, Cambridge, CB3 0WB, UK}}\\
	\emph{E-mail address}: cja59@cam.ac.uk
\end{document}